\documentclass[3p]{elsarticle}



\usepackage{amsfonts}
\usepackage{amsmath}
\usepackage{amsthm}
\usepackage{amssymb}


\newcommand{\Z}{\mathbb{Z}}

\newcommand{\R}{\mathbb{R}}
\newcommand{\C}{\mathbb{C}}



\theoremstyle{plain} 
\newtheorem{theorem}{Theorem}[section]
\newtheorem{corollary}[theorem]{Corollary}
\newtheorem{lemma}[theorem]{Lemma}

\newtheorem{proposition}[theorem]{Proposition}

\newtheorem*{lemma*}{Lemma}
\newtheorem*{claim*}{Claim}
\newtheorem*{theorem*}{Theorem}
\newtheorem*{proposition*}{Proposition}
\newtheorem*{problemstatement*}{Problem}
\newtheorem*{corollary*}{Corollary}

\theoremstyle{definition} 
\newtheorem{definition}[theorem]{Definition}

\newtheorem{notation}[theorem]{Notation}
\newtheorem*{notation*}{Notation}

\theoremstyle{remark} 




\newcommand{\abs}[1]{\left| #1 \right|}
\newcommand{\conj}[1]{\overline{#1}}



\newcommand{\norm}[1]{\lVert #1 \rVert }



\hyphenation{ei-gen-space ei-gen-spac-es ei-gen-val-ue ei-gen-val-ues
  ei-gen-vec-tor ei-gen-vec-tors in-te-gra-ble}
\newcommand{\grad}{\nabla}
\newcommand{\evalat}[2]{\bigl. #1 \bigr|_{#2}\rule{0pt}{3ex}}
\renewcommand{\Re}{\operatorname{Re}}
\renewcommand{\Im}{\operatorname{Im}}
\newcommand{\Res}{\operatorname{Res}}

\newcommand{\ad}{\operatorname{ad}}
\newcommand{\spanop}{\operatorname{span}}
\usepackage{cancel}
\usepackage{hyperref}
\newcommand{\inner}[2]{\langle #1 , #2 \rangle}
\newcommand{\innerp}[2]{\left( #1, #2 \right)}
\usepackage{url}
\numberwithin{equation}{section}
\usepackage{paralist}
\usepackage{enumitem}
\newcounter{continuehere}

\newcommand{\firstder}[1]{\frac{\partial}{\partial #1}}

\theoremstyle{remark}
\newtheorem*{remark*}{Remark}

\newcommand{\ignorethis}[1]{}

\newcommand{\polar}{\gamma}


\begin{document}

\begin{frontmatter}

\title{Precise Estimates for the Subelliptic Heat Kernel on H-type Groups}
\author{Nathaniel Eldredge}

\address{Department of Mathematics, University of California, San
  Diego, 
9500 Gilman Drive, Dept. 0112, La Jolla, CA 92093-0112 USA}
\ead{neldredge@math.ucsd.edu}
\ead[url]{http://www.math.ucsd.edu/~neldredg/}

 \begin{abstract}

   We establish precise upper and lower bounds for the 
   subelliptic heat kernel on nilpotent Lie groups $G$ of H-type.
   Specifically, we show that there exist positive constants $C_1,
   C_2$ and a polynomial correction function $Q_t$ on $G$ such that
   \begin{equation*}
     C_1 Q_t e^{-\frac{d^2}{4t}} \le p_t \le C_2 Q_t e^{-\frac{d^2}{4t}} 
   \end{equation*}
   where $p_t$ is the heat kernel, and $d$ the Carnot-Carath\'eodory
   distance on $G$.  We also obtain similar bounds on the norm of
   its subelliptic gradient $\abs{\grad p_t}$.  Along the way, we
   record explicit formulas for the distance function $d$ and the
   subriemannian geodesics of H-type groups.

    On donne des estimations pr\'ecises des bornes sup\'erieures et
    inf\'erieures du noyau de la chaleur sous-elliptique sur les
    groupes de Lie nilpotents $G$ de type H.  Plus pr\'ecis\'ement, on
    montre qu'il existe des constantes positives $C_1$ et $C_2$, et
    une fonction polynomiale corrective $Q_t$ sur $G$ telles que
   \begin{equation*}
     C_1 Q_t e^{-\frac{d^2}{4t}} \le p_t \le C_2 Q_t e^{-\frac{d^2}{4t}}, 
   \end{equation*}
   o\`u $p_t$ est le noyau de la chaleur, et $d$ est la distance de
   Carnot-Carath\'eodory sur $G$.  On obtient aussi des estimations
   similaires pour la norme du gradient $\abs{\nabla p_t}$.  En passant,
   on donne aussi des formules explicites pour la distance $d$ et les
   g\'eod\'esiques sous-riemannienes sur les groupes de type H.

\end{abstract}

 \begin{keyword}
   heat kernel  \sep subelliptic \sep hypoelliptic \sep Heisenberg group

   \MSC 35H10 \sep 53C17
 \end{keyword}

\end{frontmatter}

\section{Introduction}

Nilpotent Lie groups have long been of interest as a natural setting
for the study of subelliptic operators; indeed, as shown in
\cite{rothschild-stein}, they model, at least locally, a general class
of hypoelliptic operators on manifolds.  Perhaps the simplest example
is the classical Heisenberg group of dimension $3$, followed by the
higher-dimensional Heisenberg or Heisenberg-Weyl groups of dimension
$2n+1$ having $1$-dimensional centers.  Beyond this, a natural
generalization of the Heisenberg groups is given by the H-type (or
Heisenberg-type) groups, which were introduced in \cite{kaplan80};
these have a greater variety of possible dimensions while retaining
some fairly strong algebraic structure.

The main result of this paper is found in Corollary
\ref{main-corollary}, in which we establish precise upper and lower
pointwise estimates on the subelliptic heat kernel $p_t$ for an H-type
group $G$, of the form
\begin{equation}\label{p-bound-intro}
  C_1 Q_t e^{-\frac{d^2}{4t}} \le p_t \le C_2 Q_t e^{-\frac{d^2}{4t}}
\end{equation}
for some positive constants $C_1$, $C_2$ and an explicit function
$Q_t$, where $d$ is the Carnot-Carath\'eodory distance on $G$.
Additionally, in Theorem \ref{main-gradient-theorem}, we obtain
similar bounds for the subriemannian gradient of $p_1$, namely that
\begin{equation}\label{grad-bound-intro}
  C_1 Q' e^{-\frac{d^2}{4}} \le \abs{\grad p_1} \le C_2 Q' e^{-\frac{d^2}{4}}
\end{equation}
for another explicit function $Q'$, where the inequality is valid at
points sufficiently far from the identity of $G$.  

Estimates of the form (\ref{p-bound-intro}) for the classical
Heisenberg group first appeared in \cite{li-jfa}, in the context of a
gradient estimate for the heat semigroup, as did an estimate
equivalent to the upper bound in (\ref{grad-bound-intro}).  A proof
for Heisenberg groups in all dimensions followed in
\cite{li-heatkernel}.  Our proof is similar in spirit to the latter,
in that it relies on the analysis of an explicit formula for $p_t$
using steepest descent methods and elementary complex analysis.

Less precise versions of the inequalities (\ref{p-bound-intro}) are
known to hold in more general settings.  Using Harnack
inequalities one can show that for general nilpotent Lie groups,
\begin{equation}
  C_1 R_1(t) e^{-\frac{d^2}{ct}} \le p_t \le C_2(\epsilon) R_2(t) e^{-\frac{d^2}{(4+\epsilon)t}}
\end{equation}
for some constants $c, C_1, C_2$ and functions $R_1, R_2$, where $C_2$
depends on $\epsilon > 0$; see chapter IV of \cite{purplebook}.
\cite{davies-pang}, among others, improves the upper bound to
\begin{equation}
   p_t(g) \le C R_3(g,t) e^{-\frac{d(g)^2}{4t}},
\end{equation}
with $R$ a polynomial correction, using logarithmic Sobolev
inequalities, whereas \cite{varopoulos-II} improves the lower bound to
\begin{equation}
  p_t \ge C(\epsilon) R_4(t) e^{-\frac{d^2}{(4-\epsilon)t}}. 
\end{equation}
Similar but slightly weaker estimates were shown for more general
sum-of-squares operators satisfying H\"ormander's condition in
\cite{kusuoka-stroock-III} by means of Malliavin calculus, and in
\cite{jerison-sanchez} by more elementary methods involving
homogeneity and the regular dependence of $p_t$ on $t$.

In the specific case of the classical Heisenberg group, asymptotic
results similar to (\ref{p-bound-intro}) had been previously obtained
in \cite{gaveau77} and \cite{hueber-muller}, but without the necessary
uniformity to translate them into pointwise estimates.  A precise
upper bound equivalent to that of (\ref{p-bound-intro}) was given in
\cite{bgg} for Heisenberg groups of all dimensions.  All three of
these works, like \cite{li-heatkernel} and the present article, were
based on an explicit formula for $p_t$ and involved steepest descent
type methods.  In \cite{garofalo-segala}, similar techniques were used
to obtain a Li-Yau-Harnack inequality for the heat equation on
Heisenberg groups.

The proof we shall give here is largely self-contained, except for the
formula (\ref{Rm-integral}) for $p_t$, which has been derived many
times in the literature by many different techniques.  We have also
tried to err to the side of including relevant details.

The author would like to extend his deep gratitude to his advisor,
Bruce Driver, for his constant feedback, encouragement and support
during the preparation of this paper.  The author would also like to
thank M. Salah Baouendi who provided some French linguistic advice,
and the anonymous referee who suggested some useful references,
especially \cite{garofalo-segala}.
This research was supported in part by NSF Grants DMS-0504608 and
DMS-0804472, as well as an NSF Graduate Research Fellowship.


\section{H-type groups}

H-type groups were first introduced in \cite{kaplan80}.  Chapter 18 of
 \cite{blu-book} contains an extended development of their fundamental
 properties; we follow its definitions here, and refer the reader
 there for further details.

\begin{definition}\label{H-type}
  Let $\mathfrak{g}$ be a finite dimensional real Lie algebra with
  center $\mathfrak{z} \ne 0$.  We say $\mathfrak{g}$ is of
  \emph{H-type} (or \emph{Heisenberg type}) if $\mathfrak{g}$ is
  equipped with an inner product $\inner{\cdot}{\cdot}$ such that:
  \begin{enumerate}
  \item $[\mathfrak{z}^\perp, \mathfrak{z}^\perp] = \mathfrak{z}$ ;
    and
  \item For each $z \in \mathfrak{z}$, define $J_z :
    \mathfrak{z}^\perp \to \mathfrak{z}^\perp$ by 
\begin{equation}\label{Jz-def}
\inner{J_z x}{y} =
    \inner{z}{[x,y]}
\end{equation}
 where $x, y \in \mathfrak{z}^\perp$.  Then
    $J_z$ is an orthogonal map whenever $\inner{z}{z}=1$.
  \end{enumerate}
 
  An \emph{H-type group} is a connected, simply connected Lie group
  whose Lie algebra is of H-type.
\end{definition}

Some authors use instead of item 2 the equivalent property that
for $x \in \mathfrak{z}^\perp$ with $\norm{x}=1$, the map $\ad_x : (\ker \ad_x)^\perp \to
\mathfrak{z}$ is an isometric isomorphism.

We record some algebraic properties of the maps $J_z$
which will be useful later.  We use $\abs{z} := \sqrt{\inner{z}{z}}$
to denote the norm associated to the inner product on $\mathfrak{g}$.
The proofs are elementary and are omitted.

\begin{proposition}\label{Jz-props}
  If $\mathfrak{g}$ is a H-type Lie algebra, then the maps $J_z :
  \mathfrak{z}^\perp \to \mathfrak{z}^\perp$ defined in Definition
  \ref{H-type} enjoy the following properties:
  \begin{enumerate}
    \item For each $z$, $J_z$ is a well-defined linear map, and $z
      \mapsto J_z$ is also linear. \label{Jz-linear}
    \item $J_z^* = -J_z$. \label{Jz-adjoint}
    \item $J_z^2 = -\norm{z}^2 I$.  Thus for $z \ne 0$, $J_z$ is
      invertible and $J_z^{-1} = -\norm{z}^{-2} J_z$. \label{Jz-square}
    \item $J_z J_w + J_w J_z = -2\inner{z}{w} I$. \label{Jz-clifford}
    \item $\inner{J_z x}{J_w x}  = \inner{z}{w} \norm{x}^2$. \label{Jz-inner}
    \item $[x, J_z x] = \norm{x}^2 z$. \label{Jz-bracket}
\end{enumerate}
\end{proposition}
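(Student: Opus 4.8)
The plan is to verify each of the six properties directly from the defining relation $\inner{J_z x}{y} = \inner{z}{[x,y]}$ and the axioms of Definition \ref{H-type}, working through them roughly in the stated order since later parts lean on earlier ones.

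\medskip

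\noindent\textbf{Approach.} For property \ref{Jz-linear}, I would note that for fixed $z$, the right-hand side $\inner{z}{[x,y]}$ is linear in $y$, so the Riesz representation theorem on the inner product space $\mathfrak{z}^\perp$ produces a unique vector $J_z x$; bilinearity of the bracket then gives linearity in $x$, and linearity of $[x,y]$ together with linearity of $\inner{z}{\cdot}$ in $z$ gives linearity of $z \mapsto J_z$. For \ref{Jz-adjoint}, compute $\inner{J_z x}{y} = \inner{z}{[x,y]} = -\inner{z}{[y,x]} = -\inner{J_z y}{x}$, which says $J_z^* = -J_z$. For \ref{Jz-square}, I would use that $J_z$ is orthogonal when $\norm{z}=1$: an orthogonal skew-adjoint map satisfies $J_z^* J_z = I$ and $J_z^* = -J_z$, so $-J_z^2 = I$; the general case follows by the homogeneity $J_z = \norm{z} J_{z/\norm{z}}$ from \ref{Jz-linear}, giving $J_z^2 = -\norm{z}^2 I$, and the inverse formula is then immediate. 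For \ref{Jz-clifford}, polarize \ref{Jz-square}: expand $J_{z+w}^2 = -\norm{z+w}^2 I = -(\norm{z}^2 + 2\inner{z}{w} + \norm{w}^2)I$ using bilinearity of $z \mapsto J_z$, and subtract $J_z^2$ and $J_w^2$. For \ref{Jz-inner}, write $\inner{J_z x}{J_w x} = \inner{x}{J_z^* J_w x} = -\inner{x}{J_z J_w x}$, then symmetrize using \ref{Jz-clifford}: $2\inner{x}{J_z J_w x} = \inner{x}{(J_z J_w + J_w J_z)x} = -2\inner{z}{w}\norm{x}^2$, so $\inner{J_z x}{J_w x} = \inner{z}{w}\norm{x}^2$. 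Finally for \ref{Jz-bracket}, observe that for any $z \in \mathfrak{z}$ we have $\inner{z}{[x, J_w x]} = \inner{J_z x}{J_w x} = \inner{z}{w}\norm{x}^2 = \inner{z}{\norm{x}^2 w}$ by \ref{Jz-inner}; since $[x, J_w x] \in \mathfrak{z}$ by axiom 1 of Definition \ref{H-type} and $z \in \mathfrak{z}$ is arbitrary, nondegeneracy of the inner product on $\mathfrak{z}$ forces $[x, J_w x] = \norm{x}^2 w$; take $w = z$.

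\medskip

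\noindent\textbf{Main obstacle.} None of these steps is genuinely hard—the proposition itself flags the proofs as elementary—so the only real care needed is in \ref{Jz-square}, where one must be sure the orthogonality hypothesis is being used correctly and that the reduction to unit vectors is legitimate (in particular handling $z = 0$ separately, where $J_0 = 0$ trivially). The subtlest conceptual point is in \ref{Jz-bracket}: one needs $[x, J_w x]$ to lie in $\mathfrak{z}$ before concluding from pairing against all $z \in \mathfrak{z}$, and that is exactly what axiom 1, $[\mathfrak{z}^\perp, \mathfrak{z}^\perp] = \mathfrak{z}$, provides (since $x, J_w x \in \mathfrak{z}^\perp$). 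Everything else is a routine manipulation of adjoints and the Clifford relation.
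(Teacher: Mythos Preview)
Your proposal is correct and follows exactly the kind of elementary verification the paper has in mind; indeed, the paper omits the proof entirely with the remark ``The proofs are elementary and are omitted,'' so you are supplying the standard details. Your handling of the one slightly delicate point---using $[\mathfrak{z}^\perp,\mathfrak{z}^\perp]\subset\mathfrak{z}$ to justify the nondegeneracy argument in item~\ref{Jz-bracket}---is right.
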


Note that items \ref{Jz-adjoint} and \ref{Jz-square} say that for $z
\ne 0$, $J_z$ is an invertible skew-symmetric linear transformation of
$\mathfrak{z}^\perp$.  Thus $\dim \mathfrak{z}^\perp$ must be even.
We will write $\dim \mathfrak{z}^\perp = 2n$ and $\dim \mathfrak{z} =
m$.

Item \ref{Jz-clifford} says that the subalgebra of
$\operatorname{End}(\mathfrak{z}^\perp)$ generated by the maps $J_z$
is a Clifford algebra.  In fact, it is a $2n$-dimensional
representation of $C\ell_{0,m}(\R)$, the Clifford algebra  generated by
a real vector space of dimension $m$ with a negative definite
quadratic form (whose signature is $(0,m)$).  So in order for an
H-type algebra with $\dim \mathfrak{z} = m$, $\dim \mathfrak{z}^\perp
= 2n$ to exist, it is necessary that $C\ell_{0,m}(\R)$ have such a
representation.  This condition is also sufficient: given such a
representation, let $V$ be the $m$-dimensional generating subspace of
$C\ell_{0,m}(\R)$, and let $\mathfrak{g} = \R^{2n} \oplus V$, with the
maps $J_z$ defined by the representation.  Then the bracket on
$\mathfrak{g}$ can be recovered in terms of the $J_z$ from
\ref{Jz-def}, and $\mathfrak{g}$ is an H-type Lie algebra.

The Hurwitz-Radon-Eckmann theorem, as found in \cite{eckmann}, gives
necessary and sufficient conditions on $n$ and $m$ for such a
representation to exist.  The corresponding theorem for H-type
algebras appears as Corollary 1 of \cite{kaplan80}, which we quote
here.

\begin{theorem}\label{dimension-classification}
  For any nonnegative integer $k$, we can uniquely write $k=a
  2^{4p+q}$ where $a$ is odd and $0 \le q \le 3$; let $\rho(k) :=
  8p+2^q$.  ($\rho$ is sometimes called the Hurwitz-Radon function.)
  There exists an H-type Lie algebra 
  of dimension $2n+m$ with center of dimension $m$ if and only if $m <
  \rho(2n)$.  In particular, for every $m \in \mathbb{N}$ there exists an
  H-type Lie algebra with center of dimension $m$.
\end{theorem}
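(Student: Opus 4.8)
The plan is to translate the question into one about modules over the Clifford algebra $C\ell_{0,m}(\R)$, for which the Hurwitz-Radon-Eckmann theorem of \cite{eckmann} supplies the answer. As observed in the paragraph preceding the statement, an H-type algebra $\mathfrak{g}$ with $\dim\mathfrak{z} = m$ and $\dim\mathfrak{z}^\perp = 2n$ amounts to a $2n$-dimensional real module over $C\ell_{0,m}(\R)$: by Proposition \ref{Jz-props}(\ref{Jz-clifford}) the operators $J_{e_1},\dots,J_{e_m}$ attached to an orthonormal basis of $\mathfrak{z}$ satisfy the Clifford relations, while conversely any such module defines a bracket through (\ref{Jz-def}). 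The first step is to record this equivalence with the metric conditions of Definition \ref{H-type} in place. Given an arbitrary $C\ell_{0,m}(\R)$-module structure on $\R^{2n}$, one first averages the inner product over the finite group generated by $J_{e_1},\dots,J_{e_m}$; afterwards each $J_{e_i}$ is orthogonal, hence skew-symmetric, hence --- by linearity of $z \mapsto J_z$ --- every $J_z$ is skew-symmetric, and since $J_z^2 = -|z|^2 I$ (expand using the Clifford relations) one gets $J_z^* J_z = -J_z^2 = |z|^2 I$, so $J_z$ is orthogonal whenever $|z| = 1$. Condition (1) of Definition \ref{H-type} then comes for free, because $J_z \neq 0$ for every $z \neq 0$ forces the image of the bracket to exhaust $\mathfrak{z}$. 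The conclusion of this step is: an H-type algebra of dimension $2n+m$ with center of dimension $m$ exists if and only if $\R^{2n}$ admits a $C\ell_{0,m}(\R)$-module structure.

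Next I would identify exactly which $\R^N$ carry such a structure. Since $C\ell_{0,m}(\R)$ is semisimple --- by the standard classification it is a full matrix algebra over $\R$, $\C$, or $\mathbb{H}$, or a direct sum of two such algebras of equal size --- every real module is a direct sum of copies of the irreducible module(s), which all share a common real dimension $d_m$; hence $\R^N$ is a module precisely when $d_m \mid N$. The values of $d_m$ come from the same classification together with Bott periodicity $C\ell_{0,m+8}(\R) \cong C\ell_{0,m}(\R) \otimes M_{16}(\R)$, which yields $d_{m+8} = 16\,d_m$ and the seed values $d_0,\dots,d_7 = 1,2,4,4,8,8,8,8$. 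In particular every $d_m$ is a power of $2$, so $d_m \mid 2n$ is equivalent to $v_2(d_m) \le v_2(2n)$, where $v_2$ denotes the $2$-adic valuation.

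It remains to match the largest admissible $m$ with $\rho$. Writing $2n = a\,2^{4p+q}$ with $a$ odd and $0 \le q \le 3$, so $v_2(2n) = 4p+q$, and using $v_2(d_{m+8}) = v_2(d_m)+4$ together with the table $v_2(d_0),\dots,v_2(d_7) = 0,1,2,2,3,3,3,3$, a one-line check in each of the cases $q = 0,1,2,3$ shows that the largest $m$ with $v_2(d_m) \le 4p+q$ is $8p$, $8p+1$, $8p+3$, $8p+7$ respectively --- i.e.\ in every case $8p + 2^q - 1 = \rho(2n) - 1$. Chaining the equivalences, an H-type algebra of the prescribed dimensions exists if and only if $d_m \mid 2n$, if and only if $m \le \rho(2n) - 1$, i.e.\ $m < \rho(2n)$. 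For the last assertion, $\rho(2^{4p}) = 8p+1$ is unbounded, so given $m$ one chooses $p \ge 1$ with $8p+1 > m$ and sets $n = 2^{4p-1}$; then $m < \rho(2n)$ and the corresponding H-type algebra has center of dimension $m$.

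The main obstacle is that the arithmetic heart of the theorem --- the values $d_m$, equivalently the classification of the algebras $C\ell_{0,m}(\R)$ and Bott periodicity --- is exactly the content of the Hurwitz-Radon-Eckmann theorem, a substantial classical result I would invoke from \cite{eckmann} rather than reprove. The remaining work is elementary; what requires care is the first step, where one must track both the orthogonality normalization and condition (1) of Definition \ref{H-type}, since a careless form of the Clifford-module dictionary would quietly discard one of the hypotheses.
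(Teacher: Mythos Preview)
Your proposal is correct, and in fact supplies considerably more detail than the paper itself: the paper does not prove this theorem at all but simply quotes it as Corollary~1 of \cite{kaplan80}, pointing to \cite{eckmann} for the underlying Hurwitz--Radon--Eckmann theorem. The approach you take --- reduce to the existence of a $2n$-dimensional $C\ell_{0,m}(\R)$-module and read off the answer from the Clifford algebra classification and Bott periodicity --- is precisely the route the paper sketches in the paragraph preceding the statement, and your averaging argument, your verification of condition~(1) of Definition~\ref{H-type}, and your case-by-case arithmetic matching $d_m$ to $\rho$ are all sound.
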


The special case $m=1$ gives the so-called isotropic Heisenberg groups
(also called the Heisenberg-Weyl groups) of real dimension $2n+1$; the
very special case $n=m=1$ is the classic Heisenberg group of dimension
$3$.

A Lie algebra $\mathfrak{g}$ is said to be \emph{nilpotent} of step
$k$ if $k$ is the smallest integer such that all $k$-fold brackets
of elements of $\mathfrak{g}$ vanish.  A nilpotent Lie algebra is
\emph{stratified} if we can write $\mathfrak{g} = \mathfrak{g}_1
\oplus \dots \oplus \mathfrak{g}_k$ where $[\mathfrak{g}_1,
  \mathfrak{g}_{i-1}] = \mathfrak{g}_i$ and $[\mathfrak{g}_1,
  \mathfrak{g}_k]=0$.  An H-type Lie algebra is
obviously stratified nilpotent of step $2$, with $\mathfrak{g}_1 =
\mathfrak{z}^\perp$, $\mathfrak{g}_2 = \mathfrak{z}$.

We recall that given a nilpotent Lie algebra $\mathfrak{g}$, there
exists a connected, simply connected Lie group $G$ whose Lie algebra
is $\mathfrak{g}$, and $G$ is unique up to isomorphism.  Indeed, we
can, and will, take $G$ to be $\mathfrak{g}$ equipped with the group
operation $\circ$ given by the Baker-Campbell-Hausdorff formula, which
for $\mathfrak{g}$ nilpotent of step $2$ reads
\begin{equation}\label{BCH-formula}
  x \circ y := x + y + \frac{1}{2}[x,y].
\end{equation}
In this case the exponential map $\mathfrak{g} \to G$ is just the
identity.  It is obvious, then, that if $\mathfrak{g}, \mathfrak{g}'$
are isomorphic as Lie algebras, then
$(\mathfrak{g},\circ),(\mathfrak{g}',\circ')$ as defined above are
isomorphic as Lie groups.

On the other hand, $\mathfrak{g}$ can be identified as an inner
product space with Euclidean space $\R^{2n+m}$, identifying
$\mathfrak{z}^\perp$ with the first $2n$ coordinates and
$\mathfrak{z}$ with the last $m$.  Therefore we can
handle H-type groups concretely as follows.

\begin{proposition}
  If $G$ is an H-type group, then there exist integers $n,m$ and a
  bracket operation $[\cdot,\cdot]$ on $\R^{2n+m}$ such that
  $(\R^{2n+m}, [\cdot,\cdot])$ is an H-type Lie algebra whose center
  is $\R^m$ and $G$ is isomorphic to $(\R^{2n+m}, \circ)$, where
  $\circ$ is defined by (\ref{BCH-formula}).
\end{proposition}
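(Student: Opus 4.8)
The plan is simply to assemble facts already recorded. Write $\mathfrak{g}$ for the Lie algebra of $G$; by hypothesis it is of H-type, with center $\mathfrak{z}$ and a distinguished inner product $\inner{\cdot}{\cdot}$. By items \ref{Jz-adjoint} and \ref{Jz-square} of Proposition \ref{Jz-props} each $J_z$ ($z \ne 0$) is invertible and skew-symmetric, so $\dim \mathfrak{z}^\perp$ is even; set $2n := \dim \mathfrak{z}^\perp$ and $m := \dim \mathfrak{z}$, and note $n \ge 1$ (since $[\mathfrak{z}^\perp,\mathfrak{z}^\perp] = \mathfrak{z} \ne 0$) and $m \ge 1$.

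First I would fix a coordinate identification adapted to the stratification. Choose an orthonormal basis $e_1, \dots, e_{2n}$ of $\mathfrak{z}^\perp$ and an orthonormal basis $e_{2n+1}, \dots, e_{2n+m}$ of $\mathfrak{z}$; together these form an orthonormal basis of $\mathfrak{g}$, and the linear isomorphism $\Phi \colon \mathfrak{g} \to \R^{2n+m}$ sending $e_i$ to the $i$-th standard basis vector is an isometry carrying $\mathfrak{z}^\perp$ onto $\R^{2n} \times \{0\}$ and $\mathfrak{z}$ onto $\{0\} \times \R^m$. Transport the bracket by setting $[u, v] := \Phi\bigl([\Phi^{-1}u, \Phi^{-1}v]\bigr)$ for $u,v \in \R^{2n+m}$. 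Then $\Phi$ is simultaneously a Lie algebra isomorphism $\mathfrak{g} \to (\R^{2n+m}, [\cdot,\cdot])$ and an isometry, so every property phrased in terms of the bracket and the inner product transfers verbatim: the center of $(\R^{2n+m}, [\cdot,\cdot])$ is $\Phi(\mathfrak{z}) = \{0\} \times \R^m$, identified with $\R^m$; condition~1 of Definition~\ref{H-type} becomes $[\R^{2n}, \R^{2n}] = \R^m$; and for each $z \in \R^m$ with $\abs{z}=1$ the associated map defined by (\ref{Jz-def}) is $\Phi J_{\Phi^{-1}z} \Phi^{-1}$, hence orthogonal. Thus $(\R^{2n+m}, [\cdot,\cdot])$ is an H-type Lie algebra with center $\R^m$.

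For the group statement, recall that $G$ is connected and simply connected with Lie algebra $\mathfrak{g}$, that such a group is unique up to isomorphism, and that it may be taken to be $(\mathfrak{g}, \circ)$ with $\circ$ given by (\ref{BCH-formula}); hence $G \cong (\mathfrak{g}, \circ)$. Since $\circ$ is built only from vector addition and the bracket, and $\Phi$ is linear and intertwines the two brackets, $\Phi$ is also a group isomorphism from $(\mathfrak{g}, \circ)$ onto $(\R^{2n+m}, \circ)$, the latter with $\circ$ given by (\ref{BCH-formula}) for the transported bracket. Composing yields $G \cong (\R^{2n+m}, \circ)$, as claimed. There is no genuine obstacle here; the only point needing a shade of care is to use an \emph{orthonormal} basis adapted to the splitting $\mathfrak{g} = \mathfrak{z}^\perp \oplus \mathfrak{z}$, so that $\Phi$ is an isometry and the H-type axioms — which refer to the inner product — pass to the standard Euclidean structure on $\R^{2n+m}$ rather than to some other inner product.
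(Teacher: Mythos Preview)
Your proof is correct and follows the same approach the paper takes: the paper does not give a formal proof of this proposition but sketches the argument in the preceding paragraphs (the even dimension of $\mathfrak{z}^\perp$ from Proposition~\ref{Jz-props}, the identification of $\mathfrak{g}$ with $\R^{2n+m}$ as inner product spaces, and the BCH construction of the group), and you have simply written out those details carefully.
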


Henceforth we shall assume that any H-type group $G$ \emph{is} of this
form.  We shall use the notation \mbox{$g = (x,z) = (x^1, \dots, x^{2n},
z^1, \dots, z^m)$} to refer to points of $G$.  The identity of $G$ is
$(0,0)$, and the inverse operation is given by $(x,z)^{-1} = (-x,-z)$.
Because of the identification of $G$ with its Lie algebra, we will
view $[\cdot,\cdot]$ as a bracket on $G$.  By a slight abuse of
notation, we will also use $[\cdot,\cdot]$ to refer to the restriction
of $[\cdot,\cdot]$ to $\R^{2n} \oplus \R^{2n} \subset G \oplus G$,
which is a bilinear skew-symmetric mapping from $\R^{2n} \oplus
\R^{2n}$ to $\R^m$.  The maps $\{J_z : z \in \R^m\}$ are identified
with $2n\times 2n$ skew-symmetric matrices which are orthogonal when
$\abs{z}=1$.

We let $\{e_1, \dots, e_{2n}\}$ denote the standard basis for
$\R^{2n}$, and $\{u_1, \dots, u_m\}$ denote the standard basis for $\R^m$.

Note that the group operation on $G$ does not preserve the inner
product, and the vector space operations $g \mapsto g + h$, $g \mapsto
cg$ are not group homomorphisms of $G$.  However, the \emph{dilation}
\begin{equation}\label{dilation}
 \varphi_\alpha(x,z) := (\alpha x, \alpha^2 z) 
\end{equation}
is both a group and a Lie algebra automorphism for all $\alpha \ne 0$.

We can now identify $\mathfrak{g}$ with the set of left-invariant vector
fields on $G$, where $X_i(0) = \frac{\partial}{\partial x^i}$,
$Z_j(0)=\frac{\partial}{\partial z^j}$; then
$\spanop\{X_1, \dots, X_{2n}\} = \mathfrak{z}^\perp$, $\spanop\{Z_1,
\dots, Z_m\} = \mathfrak{z}$.  We can compute
\begin{equation*}
\begin{split}
  (X_i f)(x,z) &= \frac{d}{dt}|_{t=0} f((x,z)\circ(t e_i, 0)) \\
  &= \frac{d}{dt}|_{t=0} f(x+t e_i, z+\frac{1}{2}t[x, e_i]) \\
  &= \frac{d}{dt}|_{t=0} f(x+t e_i, z+\frac{1}{2}t\sum_{j} \inner{J_{u_j} x}{e_i}
  u_j) \\
  &= \left(\frac{\partial}{\partial x^i} + \frac{1}{2}\sum_j \inner{J_{u_j} x}{e_i}
  \frac{\partial}{\partial z^j}\right) f
\end{split}
\end{equation*}
So we have
\begin{align}
X_i &= \frac{\partial}{\partial x^i} + \frac{1}{2}\sum_j \inner{J_{u_j}
  x}{e_i} \frac{\partial}{\partial z^j} \label{Xi-coords} \\
Z_j &= \frac{\partial}{\partial z^j}. \label{Zj-coords}
\end{align}
The \emph{(sub-)gradient} on $G$ is given in these coordinates by 
\begin{equation}\label{gradient}
\grad f(x,z) = \sum e_i X_i f(x,z)  = \grad_x f(x,z) +
  \frac{1}{2}J_{\grad_z f(x,z)} x.
\end{equation}
Note in particular, if $f$ is radial, so that $f(x,z) = f(\abs{x}, \abs{z})$, this becomes
\begin{equation}\label{gradient-radial}
  \grad f(x,z) = f_{\abs{x}} (\abs{x}, \abs{z}) \hat{x} + \frac{1}{2}f_{\abs{z}}(\abs{x},
  \abs{z}) \abs{x} J_{\hat{z}} \hat{x} 
\end{equation}
where we use the notation $\hat{u} := \frac{u}{\abs{u}}$ to denote the
unit vector in the $u$ direction.  We draw attention to the fact that
$\hat{x}$ and $J_{\hat{z}}\hat{x}$ are orthogonal unit vectors in
$\R^{2n}$ for any nonzero $x,z$.

\section{Subriemannian geometry}

Our desired estimate for the heat kernel $p_t$ is in terms of the
Carnot-Carath\'eodory distance $d$, which is best described in the
language of subriemannian geometry.  The goal of this section will be
to obtain an explicit formula for $d$, and along the way we record
formula for the geodesics of $G$.  The computation is a
straightforward application of Hamiltonian mechanics, but we have not
seen it appear in the literature in the case of H-type groups.  The
corresponding computation for the Heisenberg groups (where the center
has dimension $m=1$) appeared in \cite{bgg} as well as
\cite{calin-book}; a computation for $m \le 7$, which could be
extended without great difficulty, can be found in the preprint
\cite{calin-H-type}.

\begin{definition}
  A \emph{subriemmanian manifold} is a smooth manifold $Q$ together
  with a subbundle $\mathcal{H}$ of $TQ$ (the \emph{horizontal
    bundle} or \emph{horizontal distribution}, whose elements are \emph{horizontal vectors}) and a
  metric $\inner{\cdot}{\cdot}_q$ on each fiber $\mathcal{H}_q$,
  depending smoothly on $q \in Q$.  $\mathcal{H}$ is
  \emph{bracket-generating} at $q$ if there is a local frame $\{X_i\}$
  for $\mathcal{H}$ near $q$ such that $\spanop\{X_i(q), [X_i,
    X_j](q), [X_i, [X_j, X_k]](q), \dots\} = T_q Q$.
\end{definition}

An H-type group $G$ can naturally be equipped as a subriemannian
manifold, by letting $\mathcal{H}_g := \{X(g) : X \in
\mathfrak{z}^\perp\}$, and using the inner product on $\mathfrak{g}$
as the metric on $\mathcal{H}$.  In other words, $\mathcal{H}_g$ is
spanned by $\{X_1(g), \dots, X_{2n}(g)\}$, which give it an
orthonormal basis.  The bracket generating condition is
obviously satisfied, since $\mathfrak{g} = \mathfrak{z}^\perp \oplus
[\mathfrak{z}^\perp,\mathfrak{z}^\perp]$.

\begin{definition}
  Let $\gamma : [0,1] \to Q$ be an absolutely continuous path.  We say
  $\gamma$ is \emph{horizontal} if $\dot{\gamma}(t) \in H_{\gamma(t)}$
  for almost every $t \in [0,1]$.  In such a case we define the
  \emph{length} of $\gamma$ as $\ell(\gamma) := \int_0^1
  \sqrt{\inner{\dot{\gamma}(t)}{\dot{\gamma}(t)}_{\gamma(t)}} \,dt$.
  The \emph{Carnot-Carath\'eodory distance} $d : Q \times Q \to [0,\infty]$ is
  defined by
  \begin{equation}
    d(q_1,q_2) = \inf\{\ell(\gamma) : \gamma(0)=q_1, \gamma(1)=q_2, \gamma
    \text{ horizontal}\}.
  \end{equation}
\end{definition}

Under the bracket generating condition, the Carnot-Carath\'eodory
distance is well behaved.  We refer the reader to Chapter 2 and
Appendix D of \cite{montgomery} for proofs of the following two theorems.

\begin{theorem}[Chow]
  If $\mathcal{H}$ is bracket generating and $Q$ is connected, then
  any two points $q_1, q_2 \in Q$ are joined by a horizontal path whose length
  is finite.  Thus $d(q_1, q_2) < \infty$, and $d$ is easily seen to
  be a distance function on $Q$.  The topology induced by $d$ is equal
  to the manifold topology for $Q$.
\end{theorem}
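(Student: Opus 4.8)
The plan is to prove the one substantive assertion---that any two points of a connected, bracket-generating subriemannian manifold $Q$ can be joined by a horizontal path of finite length---and to read off the remaining claims (that $d$ is a metric, and that it induces the manifold topology) from the local estimates produced along the way. Fix a basepoint $q_0$ and let $\mathcal{O}(q_0)$ denote the set of endpoints $\gamma(1)$ of horizontal paths $\gamma$ with $\gamma(0) = q_0$. Since horizontal paths may be reversed and concatenated, ``reachable from $q_0$'' is an equivalence relation, so $Q$ is the disjoint union of its reachable sets; if every reachable set is open, then connectedness of $Q$ forces $\mathcal{O}(q_0) = Q$, and finiteness of $d$ is automatic since the connecting paths will be finite concatenations of integral curves of smooth vector fields.

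So the heart of the matter is the openness of $\mathcal{O}(q_0)$, a purely local question. Working in a chart near $q_0$, I would fix a local orthonormal frame $X_1, \dots, X_{2n}$ for $\mathcal{H}$ and, using the bracket-generating hypothesis, select iterated Lie brackets $Y_1, \dots, Y_r$ of the $X_i$, where $r = \dim Q$, whose values at $q_0$ form a basis of $T_{q_0}Q$; let $\ell_j$ denote the bracket-length of $Y_j$. The mechanism converting brackets into (approximately) horizontal motion is the commutator-of-flows identity, for instance
\begin{equation*}
  \bigl(e^{-sX}\circ e^{-sW}\circ e^{sX}\circ e^{sW}\bigr)(q) = q + s^2\,[W,X](q) + O(s^3)
\end{equation*}
in coordinates, together with its iterates realizing a bracket of length $\ell$ up to an error of order $\ell + 1$. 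Splicing such building blocks together, with the block corresponding to $Y_j$ run for time proportional to $t_j^{1/\ell_j}$, I would assemble a smooth map $\Phi$ from a neighborhood of $0$ in $\R^r$ into $Q$ with $\Phi(0) = q_0$, with image contained in $\mathcal{O}(q_0)$, and with $\Phi(t) = q_0 + \sum_j t_j\,Y_j(q_0) + (\text{higher order})$ in the chart.

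It then remains to check that $\Phi$ fills out a full manifold-neighborhood of $q_0$. Were all the $\ell_j$ equal to $1$, this would follow at once from the inverse function theorem applied to $d\Phi_0$; in general one rescales the time variables by the weights $\ell_j$ and conjugates by dilations (in the present setting, the $\varphi_\alpha$ of (\ref{dilation})) to produce a family of maps converging in $C^1$ to one with invertible differential, and then applies the ordinary inverse function theorem. Keeping track of the higher-order errors across brackets of differing lengths in this scaling argument is, I expect, the main obstacle to a clean write-up, though it is routine in principle. Granting it, $\mathcal{O}(q_0)$ is open, which settles connectivity and finiteness. For the metric axioms: symmetry is time-reversal of paths, the triangle inequality is concatenation, and $d(q_1, q_2) = 0 \Rightarrow q_1 = q_2$ holds because in a chart the horizontal length of a curve dominates its Euclidean length. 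Finally, every $d$-ball is manifold-open since $d(q_0, \cdot)$ is continuous, while conversely every manifold-neighborhood of $q_0$ contains a $d$-ball by the ``ball-box'' comparison $d(q_0, \Phi(t)) \asymp \sum_j \abs{t_j}^{1/\ell_j}$ extracted from $\Phi$; hence the two topologies coincide.

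I would also point out that for the H-type groups considered here none of this machinery is needed: since $\g = \mathfrak{z}^\perp \oplus [\mathfrak{z}^\perp, \mathfrak{z}^\perp]$ and, by item~\ref{Jz-bracket} of Proposition~\ref{Jz-props}, $[x, J_z x] = \norm{x}^2 z$, the step-$2$ Baker--Campbell--Hausdorff formula (\ref{BCH-formula}) gives $(x,0) \circ (J_z x, 0) \circ (-x, 0) \circ (-J_z x, 0) = (0, \norm{x}^2 z)$, while the curves $t \mapsto g \circ (tv, 0)$ with $v \in \mathfrak{z}^\perp$ are horizontal; hence a bounded number of such segments already joins $(0,0)$ to an arbitrary $(x,z)$, yielding both connectivity and the finiteness of $d$ directly.
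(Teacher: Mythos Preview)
The paper does not prove Chow's theorem: it simply states it and refers the reader to Chapter~2 and Appendix~D of \cite{montgomery}. So there is no proof in the paper to compare against; your proposal stands on its own.

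Your outline follows one of the standard routes---reduce to openness of reachable sets, realize iterated brackets by commutators of flows, and argue local surjectivity of the resulting endpoint map---and is correct in spirit. Two remarks. First, the step you flag as the main obstacle really is one: the anisotropic rescaling you invoke, with the parenthetical appeal to the dilations $\varphi_\alpha$ of (\ref{dilation}), works as stated only on a Carnot group. On a general subriemannian manifold $Q$ there are no intrinsic dilations; one first has to build \emph{privileged coordinates} adapted to the flag generated by $\mathcal{H}$ at $q_0$, and only then does a rescaled family of endpoint maps converge in $C^1$ to something invertible. That is essentially the content of the ball-box theorem, and it is more work than ``routine in principle'' suggests. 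If you only want connectivity and finiteness of $d$---not the topology comparison---the orbit theorem of Sussmann gives a cleaner argument that avoids all of this. Second, a small technical point: $t_j^{1/\ell_j}$ is undefined for $t_j<0$ when $\ell_j$ is even, so the map $\Phi$ as written lives only on an orthant; the usual fix is signed roots or an extra conjugation by a flow.

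Your closing observation for H-type groups is correct and worth keeping: the identity $(x,0)\circ(J_z x,0)\circ(-x,0)\circ(-J_z x,0)=(0,\norm{x}^2 z)$ checks out directly from (\ref{BCH-formula}) and item~\ref{Jz-bracket} of Proposition~\ref{Jz-props}, and together with the horizontal segments $t\mapsto g\circ(tv,0)$ it yields an explicit finite-length horizontal path from the identity to any $(x,z)$. For the purposes of this paper, that is all that is actually needed.
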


\begin{theorem}
  If $Q$ is complete under the Carnot-Carath\'eodory distance $d$,
  then the infimum in the definition of $d$ is achieved; that is, any
  two points $q_1, q_2 \in Q$ are joined by at least one shortest
  horizontal path.
\end{theorem}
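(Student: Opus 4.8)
This is the subriemannian analogue of the Hopf--Rinow theorem, and I would prove it by the direct method of the calculus of variations. Fix $q_1 \ne q_2$ (the case $q_1 = q_2$ being trivial) and set $D := d(q_1,q_2)$, which is finite by Chow's theorem. First I would take a minimizing sequence of horizontal paths $\gamma_k : [0,1] \to Q$ with $\gamma_k(0) = q_1$, $\gamma_k(1) = q_2$, and $\ell(\gamma_k) \to D$, and replace each $\gamma_k$ by its constant-speed reparametrization; this is again an absolutely continuous horizontal path of the same length (each $\gamma_k$ has positive length, its endpoints being distinct), and it satisfies $\sqrt{\inner{\dot\gamma_k}{\dot\gamma_k}_{\gamma_k}} \equiv \ell(\gamma_k)$ almost everywhere. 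For $k$ large, $\ell(\gamma_k) \le D+1$, so $d(\gamma_k(s),\gamma_k(t)) \le \ell(\gamma_k|_{[s,t]}) \le (D+1)\abs{s-t}$: the family is uniformly Lipschitz for $d$, and all the $\gamma_k$ take values in the closed ball $\closure{B} := \{q : d(q_1,q) \le D+1\}$.

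The crucial step --- and the main obstacle --- is to know that $\closure{B}$ is compact. Here I would use that $(Q,d)$ is a length space (by construction $d$ is an infimum of lengths), that $Q$ is locally compact (being a manifold), and that $(Q,d)$ is complete by hypothesis: these are precisely the hypotheses of the metric Hopf--Rinow theorem, whose conclusion includes that closed bounded subsets are compact. Alternatively, one can argue more concretely that the ball--box theorem makes small Carnot-Carath\'eodory balls precompact and that completeness propagates compactness to balls of arbitrary radius. Granting this, $\closure{B}$ is a compact metric space, the family $\{\gamma_k\}$ is equicontinuous with values in $\closure{B}$, and the Arzel\`a--Ascoli theorem produces a subsequence (not relabeled) converging uniformly to a path $\gamma$ with $\gamma(0)=q_1$, $\gamma(1)=q_2$, and $d(\gamma(s),\gamma(t)) \le (D+1)\abs{s-t}$.

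It remains to check that $\gamma$ is horizontal with $\ell(\gamma) \le D$. I would cover the compact set $\gamma([0,1])$ by finitely many open sets carrying orthonormal frames $\{X_i\}$ for $\mathcal{H}$ and argue piecewise. Writing $\dot\gamma_k = \sum_i u_i^k X_i(\gamma_k)$, the constant-speed condition gives $\sum_i (u_i^k)^2 \equiv \ell(\gamma_k)^2 \le (D+1)^2$, so the controls $u^k$ are bounded in $L^\infty$ and, along a further subsequence, converge weakly in $L^2([0,1];\R^{2n})$ to some $u$. Passing to the limit in the integral identity $\gamma_k(t) = \gamma_k(0) + \int_0^t \sum_i u_i^k(s)\, X_i(\gamma_k(s))\,ds$ --- using that $X_i(\gamma_k(\cdot)) \to X_i(\gamma(\cdot))$ uniformly while $u^k \rightharpoonup u$ weakly, so that the products converge --- shows $\gamma$ is absolutely continuous with $\dot\gamma = \sum_i u_i X_i(\gamma)$, hence horizontal. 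Finally, by the Cauchy--Schwarz inequality on $[0,1]$ and weak lower semicontinuity of the $L^2$ norm, $\ell(\gamma) = \int_0^1 \norm{u}\,dt \le \norm{u}_{L^2} \le \liminf_k \norm{u^k}_{L^2} = \liminf_k \ell(\gamma_k) = D$, while $\ell(\gamma) \ge d(q_1,q_2) = D$ by the definition of $d$; hence $\gamma$ is a shortest horizontal path. Apart from the Heine--Borel property of $\closure{B}$, everything here is standard soft analysis: reparametrization, Arzel\`a--Ascoli, and the closure of trajectories of a control system under weak convergence of the controls.
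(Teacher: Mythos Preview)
The paper does not prove this theorem at all: it simply states it and refers the reader to Chapter~2 and Appendix~D of \cite{montgomery}. So there is no ``paper's own proof'' to compare against; you have supplied what the paper omits.

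Your argument is the standard one and is essentially correct. A couple of small points worth tightening. First, when you invoke the metric Hopf--Rinow theorem, the hypothesis that $(Q,d)$ is a length space deserves a one-line justification: since $d$ is by definition an infimum of lengths of admissible curves, approximate midpoints exist, which is what that theorem actually needs. Second, in the limit passage $\gamma_k(t) = \gamma_k(0) + \int_0^t \sum_i u_i^k X_i(\gamma_k)$, you are implicitly working in a coordinate chart; since the image $\gamma([0,1])$ is covered by finitely many such charts and the $\gamma_k$ converge uniformly, for large $k$ the $\gamma_k$ land in the same charts on the same subintervals, so the piecewise argument goes through --- but it is worth saying this explicitly. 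With those clarifications the sketch is a complete proof of exactly the result Montgomery proves, by exactly the same method (direct method, Arzel\`a--Ascoli, weak compactness of controls, lower semicontinuity of energy).
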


For an H-type group, we obtain the following explicit formula for the
distance.  Note that by its definition, $d$ is left-invariant,
i.e. $d(g,h) = d(kg, kh)$, so it is sufficient to compute distance
from the identity.  By an abuse of notation, we write $d(x,z)$ to mean
$d((0,0),(x,z))$.  

\begin{theorem}\label{distance}
  Define the function $\nu : \R \to \R$ by
  \begin{equation} \label{nu-def}
  \nu(\theta) = \frac{2\theta - \sin 2 \theta}{1-\cos 2 \theta} 
  = 
  \frac{\theta}{\sin^2 \theta} - \cot\theta = -\frac{d}{d\theta} [\theta
  \cot \theta]
  \end{equation}
  where the alternate form comes from the double-angle identities.
  Then
  \begin{equation}\label{distance-formula}
    d(x,z) = 
    \begin{cases}
      \abs{x} \frac{\theta}{\sin\theta}, &  z \ne 0, x \ne 0 \\
      \abs{x}, & z=0 \\
      \sqrt{4\pi\abs{z}}, & x=0
    \end{cases}
  \end{equation}
  where $\theta$ is the unique solution in $[0,\pi)$ to $\nu(\theta) =
    \frac{4 \abs{z}}{\abs{x}^2}$.
\end{theorem}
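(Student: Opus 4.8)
\emph{Proof strategy.}
The plan is to treat this as a Hamiltonian-mechanics computation for the left-invariant sub-Riemannian structure on $G$. By left-invariance it suffices to compute $d$ from the identity, and the case $z=0$ is immediate: the segment $s\mapsto sx$ is horizontal of length $\abs{x}$, while for any horizontal $\gamma=(x(s),z(s))$ one has $\abs{\dot\gamma}=\abs{\dot x}$, so $\ell(\gamma)$ equals the Euclidean length of the projection $x(\cdot)$, which is at least $\abs{x}$; hence $d(x,0)=\abs{x}$. For the remaining cases I would describe a horizontal curve by its control $u=\dot x$, with $\dot z^j=\tfrac12\inner{J_{u_j}x}{\dot x}$, and minimize the energy $\tfrac12\int_0^1\abs{\dot x}^2\,ds$, whose minimum equals $\tfrac12 d(x,z)^2$.

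Because H-type groups are \emph{fat} --- for $x\ne 0$ the map $y\mapsto[x,y]$ is onto $\mathfrak{z}$, by item \ref{Jz-bracket} --- there are no nonconstant abnormal extremals, so a minimizer (which exists, $G$ being complete under $d$) is a normal extremal, i.e.\ the projection of a solution of Hamilton's equations for $H(x,z,\xi,\tau)=\tfrac12\abs{\xi+\tfrac12 J_\tau x}^2$, where $(\xi,\tau)$ are momenta dual to $(x,z)$ and $J_\tau:=\sum_j\tau_j J_{u_j}$. The algebraic structure does the rest: $H$ does not depend on $z$, so $\tau$ is constant; the optimal control is $u=\xi+\tfrac12 J_\tau x=\dot x$; and differentiating gives $\dot u=J_\tau u$, so $u(s)=e^{sJ_\tau}u_0$ with $u_0=\dot x(0)$. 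Since $J_\tau^2=-\abs{\tau}^2 I$ (item \ref{Jz-square}), we have $e^{sJ_\tau}=\cos(s\abs{\tau})I+\abs{\tau}^{-1}\sin(s\abs{\tau})J_\tau$, and integrating --- using items \ref{Jz-square}, \ref{Jz-inner}, \ref{Jz-bracket} to reduce every inner product and bracket of $u_0$ with $J_\tau u_0$ to scalars times $\abs{u_0}^2$ and $\tau$ --- produces closed forms for $x(s)$ and $z(s)$.

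Evaluating at $s=1$ is then bookkeeping: $\abs{u(s)}\equiv\abs{u_0}$, so the length is $\abs{u_0}$; the vector $z(1)$ is parallel to $\tau$; and, writing $r=\abs{\tau}$ and $\theta=r/2$, the double-angle identities give $\abs{x}=\tfrac{\sin\theta}{\theta}\abs{u_0}$ and $\tfrac{4\abs{z}}{\abs{x}^2}=\tfrac{2\theta-\sin 2\theta}{1-\cos 2\theta}=\nu(\theta)$. Hence $d=\abs{u_0}=\abs{x}\tfrac{\theta}{\sin\theta}$ with $\nu(\theta)=4\abs{z}/\abs{x}^2$, provided the minimizing extremal has $\theta\in[0,\pi)$. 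Uniqueness of such a $\theta$ follows because $\nu$ is a strictly increasing bijection of $[0,\pi)$ onto $[0,\infty)$: one computes $\nu'=2(\sin\theta-\theta\cos\theta)/\sin^3\theta>0$ on $(0,\pi)$, and $\nu(0^+)=0$, $\nu(\pi^-)=+\infty$. The case $x=0$ falls out as the limit $\theta\to\pi$ of $d^2=\abs{x}^2\theta^2/\sin^2\theta$ together with $\nu(\theta)\sin^2\theta=\theta-\sin\theta\cos\theta\to\pi$, which gives $d^2\to 4\pi\abs{z}$; alternatively one checks directly that normal extremals from $0$ to a central point $(0,z)$ have $r=2k\pi$ for a positive integer $k$ and length $\sqrt{4k\pi\abs{z}}$, minimized at $k=1$.

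The main obstacle is the proviso that a length-minimizing normal extremal has $r=\abs{\tau}<2\pi$. An extremal with $r>2\pi$ overshoots --- on a subinterval of length $2\pi/r$ its $x$-component returns to the origin --- and is not globally minimizing. I would pin this down either by a Jacobi-field (second-variation) computation showing that the first conjugate point along such an extremal occurs precisely at parameter $\theta=\pi$, so that the arc with $\theta<\pi$ is minimizing and the sub-Riemannian exponential map restricts to a diffeomorphism from $\{\abs{\tau}<2\pi\}$ onto $\{(x,z):x\ne 0\}$; or by a direct comparison showing that, for a fixed value of $4\abs{z}/\abs{x}^2$, every extremal with $\abs{\tau}\ge 2\pi$ reaching $(x,z)$ is strictly longer than the one with $\abs{\tau}<2\pi$, i.e.\ comparing $\theta/\abs{\sin\theta}$ across the branches $\theta\in(k\pi,(k+1)\pi)$ under the constraint $\nu(\theta)=4\abs{z}/\abs{x}^2$. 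Everything else is manipulation of the identities collected in Proposition \ref{Jz-props}.
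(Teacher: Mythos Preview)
Your proposal is correct and follows essentially the same route as the paper: verify fatness so that minimizers are normal extremals, solve Hamilton's equations using the identities in Proposition~\ref{Jz-props} to get closed forms for $x(s),z(s)$, read off $\ell=\abs{u_0}=\abs{x}\theta/\sin\theta$ with $\nu(\theta)=4\abs{z}/\abs{x}^2$, and then argue that among all solutions of this constraint the one with $\theta\in[0,\pi)$ gives the shortest length. The paper resolves your ``main obstacle'' by your second alternative, a direct branch comparison: setting $F(\theta)=(\theta/\sin\theta)^2/\nu(\theta)=\theta^2/(\theta-\sin\theta\cos\theta)$, it shows $F$ is increasing on $(\pi/2,\pi)$ and that $F(\theta)>\pi=F(\pi)$ for all $\theta>\pi$ (checking critical points of $F$), so the $\theta\in[0,\pi)$ branch always wins; your Jacobi-field alternative would also work but is not what the paper does.
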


We note that it is apparent from (\ref{distance-formula}) that we have
the scaling property
\begin{equation}\label{distance-dilate}
  d(\varphi_\alpha(x,z)) = \alpha d(x,z)
\end{equation}
with $\varphi$ as in (\ref{dilation}).

One way to compute the Carnot-Carath\'eodory distance is to find such
a shortest path and compute its length.  To find a shortest path, we
use Hamiltonian mechanics, following Chapters 1 and 5 of
\cite{montgomery}.  Roughly speaking, it can be shown that a length
minimizing path also minimizes the energy $\frac{1}{2}\int_0^1
\norm{\dot{\gamma}(t)}^2\,dt$, and as such should solve Hamilton's
equations of motion.  The argument uses the method of Lagrange
multipliers, and requires that the endpoint map taking horizontal
paths to their endpoints has a surjective differential.  This always
holds in the Riemannian setting, but is not generally true in
subriemannian geometry; the Martinet distribution (see Chapter 3 of
\cite{montgomery}) is a counterexample in which some shortest paths do
not satisfy Hamilton's equations.  Additional assumptions on
$\mathcal{H}$ are needed.  One which is sufficient (but certainly not
necessary) is that the distribution be \emph{fat}:

\begin{definition}\label{fat-def}
  Let $\Theta$ be the canonical $1$-form on the cotangent bundle
  $T^*Q$, $\omega = d\Theta$ the canonical symplectic $2$-form, and
  let $\mathcal{H}^0 := \{ p_q \in T^*Q : p_q(\mathcal{H}_q) = 0\}$ be
  the annihilator of $\mathcal{H}$.  (Note $\mathcal{H}^0$ is a
  sub-bundle, and hence also a submanifold, of $T^*Q$.)  We say
  $\mathcal{H}$ is \emph{fat} if $\mathcal{H}^0$ is symplectic away
  from the zero section.  That is, if $p_q \in \mathcal{H}^0$ is not
  in the zero section, $v \in T_{p_q}\mathcal{H}^0$, and $\omega(v,
  w)=0$ for all other $w \in T_{p_q}\mathcal{H}^0$, then $v=0$.
\end{definition}

\begin{definition}
  If $(Q, \mathcal{H}, \inner{\cdot}{\cdot})$ is a subriemannian
  manifold, the \emph{subriemannian Hamiltonian} $H : T^*Q \to \R$ is
  defined by
  \begin{equation}\label{hamiltonian-subriemannian}
    H(p_q) = \frac{1}{2}\sum_i p_q(v_i)^2
  \end{equation}
  where $\{v_i\}$ is an orthonormal basis for $(\mathcal{H}_q,
  \inner{\cdot}{\cdot}_q)$.  It is clear that this definition is
  independent of the chosen basis.  Let the \emph{Hamiltonian vector
    field} $X_H$ on $T^*Q$ be the unique vector field satisfying $dH +
  \omega(X_H, \cdot) = 0$ (as elements of $T^*T^*Q$).  $X_H$ is well
  defined because $\omega$ is symplectic.  \emph{Hamilton's equations
    of motion} are the ODEs for the integral curves of $X_H$.
\end{definition}

The following theorem summarizes (a special case of) the argument of
Chapters 1 and 5 of \cite{montgomery}.

\begin{theorem}
  If $\mathcal{H}$ is fat, then any length minimizing path $\sigma :
  [0,1] \to Q$, when parametrized with constant speed, is also energy
  minimizing and is the projection onto $Q$ of a path ${\gamma} :
  [0,1] \to T^*Q$ which satisfies Hamilton's equations of motion:
  $\dot{\gamma}(t) = X_H(\gamma(t))$.
\end{theorem}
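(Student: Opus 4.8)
The plan is to carry out the standard Lagrange-multiplier (Pontryagin maximum principle) argument for the energy functional on horizontal paths, using the fat hypothesis at exactly one point: to exclude abnormal (singular) minimizers. First, the passage from length to energy is Cauchy--Schwarz: for any horizontal $\gamma$,
\[
  \ell(\gamma)^2 \;=\; \left(\int_0^1 \norm{\dot\gamma(t)}\,dt\right)^2 \;\le\; \int_0^1 \norm{\dot\gamma(t)}^2\,dt \;=\; 2E(\gamma),
\]
with equality precisely when $\norm{\dot\gamma}$ is constant. Hence if $\sigma$ is length minimizing and parametrized with constant speed, then $2E(\sigma) = \ell(\sigma)^2 \le \ell(\tau)^2 \le 2E(\tau)$ for every horizontal $\tau$ with the same endpoints, so $\sigma$ is energy minimizing. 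If the two endpoints coincide the minimizer is constant and the statement is trivial, so assume $\sigma$ is nonconstant.

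Next I would set up the constrained variational problem. On the space of horizontal $H^1$ paths issuing from $\sigma(0)$, write $\dot\gamma = \sum_i u_i(t)\,X_i(\gamma(t))$ in a local orthonormal horizontal frame, so that $E(\gamma) = \tfrac{1}{2}\int_0^1 \sum_i u_i(t)^2\,dt$ and the endpoint map is $\mathrm{end}(\gamma) = \gamma(1)$; we minimize $E$ over the controls $u=(u_i)$ subject to $\mathrm{end}(\gamma)$ being fixed. Applying the Pontryagin maximum principle (equivalently, the Lagrange multiplier rule for this constraint) to $\sigma$ yields a nonzero pair $(\lambda_0,\gamma)$ with $\lambda_0 \in \{0,1\}$ and a lift $\gamma(t) \in T^*_{\sigma(t)}Q$ along which the control-Hamiltonian $\widetilde H_{\lambda_0}(p,u) = \sum_i u_i\,p(X_i) - \tfrac{\lambda_0}{2}\sum_i u_i^2$, maximized in $u$, is constant and generates the flow of $\gamma$. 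In the normal case $\lambda_0 = 1$, maximizing in $u$ gives $u_i = p(X_i)$ along the lift, and substituting collapses $\widetilde H_1$ to $\tfrac{1}{2}\sum_i p(X_i)^2 = \tfrac{1}{2}H(p)$; hence $\gamma$ is an integral curve of $X_H$ (after the harmless constant rescaling of the Hamiltonian, equivalently of the time parameter) and $\sigma$ is its projection, which is the desired conclusion. It remains to rule out the abnormal case $\lambda_0 = 0$.

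This is where fatness enters and is the heart of the matter. When $\lambda_0 = 0$ the lift satisfies $\gamma(t)(\mathcal{H}_{\sigma(t)}) = 0$, i.e.\ $\gamma(t) \in \mathcal{H}^0$, and the maximum-principle equations say precisely that $\dot\gamma(t)$ lies in the characteristic subspace of $\mathcal{H}^0$ at $\gamma(t)$ --- the set of $v \in T_{\gamma(t)}\mathcal{H}^0$ with $\omega(v,w)=0$ for all $w \in T_{\gamma(t)}\mathcal{H}^0$, in the notation of Definition \ref{fat-def}. But the fat hypothesis says exactly that this subspace is $0$ at every point of $\mathcal{H}^0$ off the zero section. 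Since a nonconstant minimizer cannot lift to the zero section, any abnormal lift would force $\dot\gamma \equiv 0$, hence $\sigma$ constant, contradicting our assumption. (Equivalently, fatness makes $d\,\mathrm{end}_\sigma$ surjective at every nonconstant horizontal $\sigma$, so the multiplier rule applies in its regular form and only $\lambda_0 = 1$ can occur.) Therefore the abnormal case is impossible, and we are in the normal case treated above.

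The main obstacle is genuinely this last step: one must correctly identify the abnormal extremals as curves tangent to the characteristic distribution of $\mathcal{H}^0$ and then read off from Definition \ref{fat-def} that fatness annihilates that distribution --- this is the substance of Chapter 5 of \cite{montgomery}, which I would cite rather than reprove, together with the control-theoretic Pontryagin principle of Chapter 1. A secondary, purely technical point is the choice of path space (absolutely continuous versus $H^1$) and the smoothness of the endpoint map needed to legitimize the multiplier rule; these are routine and are also handled in \cite{montgomery}.
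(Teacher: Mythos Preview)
The paper does not actually prove this theorem: it is stated as a summary of the argument in Chapters~1 and~5 of \cite{montgomery}, with no proof given. Your proposal, by contrast, supplies a correct sketch of precisely that argument --- the Cauchy--Schwarz reduction from length to energy, the Pontryagin multiplier rule yielding the normal/abnormal dichotomy, and the use of fatness to kill the abnormal case by identifying abnormal lifts with curves in the characteristic distribution of $\mathcal{H}^0$. So your approach is not merely consistent with the paper's; it is an explicit unpacking of the reference the paper defers to, and is sound as written.
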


We now verify explicitly that this theorem applies to H-type groups.
We first adopt a coordinate system for the cotangent bundle $T^*G$.

\begin{notation}
  Let $(x, z, \xi, \eta) : T^*G \to \R^{2n} \times \R^{m} \times
  \R^{2n} \times \R^m$ be the coordinate system on $T^*G$ such that
  $x^i(p_g) = x^i(g)$, $z^j(p_g)=z^j(g)$, $\xi_i(p_g) =
  p(\frac{\partial}{\partial x^i})$, $\eta_j(p_g) =
  p(\frac{\partial}{\partial z^j})$.  That is,
  \begin{equation*}
    p_g = \left(x(g), z(g),
   \sum_i \xi_i dx^i + \sum_j \eta_j dz^j\right).
  \end{equation*}
  In these
  coordinates, the canonical $2$-form $\omega$ has the expression
  $\omega = \sum_i  d\xi_i \wedge dx^i + \sum_j d\eta_j \wedge dz^j$.
\end{notation}

\begin{proposition}
If $G$ is an H-type group with horizontal distribution $\mathcal{H}$
spanned by the vector fields $X_i$, then $\mathcal{H}$ is fat.
\end{proposition}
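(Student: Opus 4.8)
The plan is to compute the annihilator $\mathcal{H}^0$ and the restriction $\omega|_{\mathcal{H}^0}$ completely explicitly, and then read off nondegeneracy by hand. Using the formula (\ref{Xi-coords}) for $X_i$, a covector $p_g = \sum_i \xi_i\,dx^i + \sum_j \eta_j\,dz^j$ annihilates every $X_i$ if and only if $\xi_i = -\frac12 \inner{J_\eta x}{e_i}$ for all $i$, where $J_\eta := \sum_j \eta_j J_{u_j}$ is the map associated (by linearity, item \ref{Jz-linear} of Proposition \ref{Jz-props}) to $\sum_j \eta_j u_j$. Thus $\mathcal{H}^0$ is the graph of $\xi = -\frac12 J_\eta x$ over the coordinates $(x,z,\eta) \in \R^{2n}\times\R^m\times\R^m$; in particular it is a smooth submanifold of $T^*G$ of dimension $2n+2m$, and a point of $\mathcal{H}^0$ lies on the zero section exactly when $\eta=0$ (the condition $\xi = 0$ being then automatic). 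So ``away from the zero section'' means precisely $\eta \ne 0$, which is where the H-type hypothesis will enter.

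Next I would restrict $\omega$. Pulling the tautological $1$-form $\Theta = \sum_i \xi_i\,dx^i + \sum_j \eta_j\,dz^j$ back along the parametrization above gives $\Theta|_{\mathcal{H}^0} = -\frac12\sum_i \inner{J_\eta x}{e_i}\,dx^i + \sum_j \eta_j\,dz^j$, and differentiating (using the skew-symmetry $J_{u_j}^* = -J_{u_j}$ to organize the terms as a genuine $2$-form) yields an explicit bilinear form on $T_p\mathcal{H}^0 \cong \R^{2n}\oplus\R^m\oplus\R^m$ with tangent coordinates $(\dot x,\dot z,\dot\eta)$: for $v=(\dot x,\dot z,\dot\eta)$ and $v'=(\dot x',\dot z',\dot\eta')$,
\[
  \omega|_{\mathcal{H}^0}(v,v') = \inner{\dot\eta}{\dot z'} - \inner{\dot\eta'}{\dot z} - \frac12\inner{J_{\dot\eta}x}{\dot x'} + \frac12\inner{J_{\dot\eta'}x}{\dot x} - \inner{J_\eta\dot x}{\dot x'},
\]
the three groups of terms coming respectively from $d\eta_j\wedge dz^j$, from the $x$-dependence of $\xi$ along $\mathcal{H}^0$, and from the $\eta$-dependence of $\xi$.

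Finally I would extract the kernel of this form at a point with $\eta\ne0$. Testing against $v'$ with $\dot x'=\dot\eta'=0$ and $\dot z'$ arbitrary forces $\dot\eta=0$; with $\dot\eta=0$ in hand, testing against $v'$ with $\dot z'=\dot\eta'=0$ and $\dot x'$ arbitrary forces $J_\eta\dot x=0$, hence $\dot x=0$ since $J_\eta$ is invertible for $\eta\ne0$ by item \ref{Jz-square} of Proposition \ref{Jz-props}; and then testing against $v'$ with $\dot x'=\dot z'=0$ and $\dot\eta'$ arbitrary forces $\dot z=0$. So $v=0$, $\omega|_{\mathcal{H}^0}$ is nondegenerate away from the zero section, and $\mathcal{H}$ is \emph{fat}. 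I do not expect a genuine obstacle: the single substantive ingredient is the invertibility of $J_\eta$ for $\eta\ne0$, i.e.\ the Clifford relation $J_z^2=-\norm{z}^2 I$, and everything else is index bookkeeping. The one place to be careful is restricting $\omega$ to $\mathcal{H}^0$ without sign errors; as a cross-check one may instead invoke the standard criterion that a distribution is fat iff the skew form $(X,Y)\mapsto \lambda([X,Y])$ is nondegenerate on $\mathcal{H}_g$ for every nonzero $\lambda\in\mathcal{H}^0_g$, which for the $X_i$ is represented (via $\inner{J_{u_j}e_i}{e_k} = \inner{u_j}{[e_i,e_k]}$) by the matrix $J_\eta$, whose nonsingularity is again item \ref{Jz-square} of Proposition \ref{Jz-props}.
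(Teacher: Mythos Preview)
Your proof is correct and follows essentially the same route as the paper: parametrize $\mathcal{H}^0$ by $(x,z,\eta)$ via the graph $\xi=-\tfrac12 J_\eta x$, compute the restricted symplectic form, and test against the three coordinate directions in the same order, with invertibility of $J_\eta$ for $\eta\ne0$ supplying the only nontrivial step. The only cosmetic slip is that your parenthetical attributes the $J_{\dot\eta}x$ terms to ``the $x$-dependence of $\xi$'' and the $J_\eta\dot x$ term to ``the $\eta$-dependence of $\xi$,'' when in fact it is the other way around; the formula itself is right.
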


\begin{proof}
For an H-type group $G$, we have $p_g \in \mathcal{H}^0$ iff
$p_g(X_i(g)) = 0$ for all $i$.  We can thus form a basis for
$\mathcal{H}^0_g \subset T^*_g G$ by
\begin{align*}
  w^j &= dz^j - \sum_i dz^j(X_i(g)) dx^i \\
  &= dz^j - \frac{1}{2}\sum_i \innerp{J_{u_j} x(g)}{e_i} dx^i.
\end{align*}
Expressing $p_g$ in this basis as $p_g = \sum_j \theta_j w^j$ yields a
system of coordinates $(x, z, \theta)$ for $\mathcal{H}^0$, where
$\theta$ can be identified with the element $(\theta^1, \dots,
\theta^m)$ of $\R^m$.  In terms of the coordinates $(x,z,\xi,\eta)$
for $T^*G$, we have $\eta = \theta$, $\xi = -\frac{1}{2}J_\theta x$.

So let $\gamma : (-\epsilon, \epsilon) \to \mathcal{H}^0$ be a curve
in $\mathcal{H}^0$ which avoids the zero section.  $\dot{\gamma}(0)$
is thus a generic element of $T \mathcal{H}^0$.  We write
$\gamma(t)$ in coordinates as $(x(t), z(t), \theta(t))$, where
$\theta(t) \ne 0$. In terms of the coordinates $(x,z,\xi,\eta)$ on
$T^*G$, we have $\eta(t)=\theta(t), \xi(t) = -\frac{1}{2}J_{\theta(t)}x(t)$.
Differentiating the latter gives 
\begin{equation*}
\dot{\xi}(t) =
-\frac{1}{2} (J_{\dot{\theta}(t)}x(t) + J_{\theta(t)}\dot{x}(t)).
\end{equation*}

Suppose that for all other such curves $\gamma'$ with $\gamma'(0)=\gamma(0)$, we have
$\omega(\dot{\gamma}(0), \dot{\gamma}'(0)) = 0$.  In terms of
coordinates,
\begin{align*}
  0 = \omega(\dot{\gamma}(0), \dot{\gamma}'(0)) 
  &= \sum_i (\dot{\xi}_i(0) \dot{x}'^i(0) -
  \dot{\xi}'_i(0)\dot{x}^i(0)) + \sum_j (\dot{\eta}_j(0)\dot{z}'^j(0)
  - \dot{\eta}'_j(0)\dot{z}^j(0)) \\
  &= \inner{\dot{\xi}(0)}{\dot{x}'(0)} -
  \inner{\dot{\xi}'(0)}{\dot{x}(0)} +
  \inner{\dot{\eta}(0)}{\dot{z}'(0)} -
  \inner{\dot{\eta}'(0)}{\dot{z}(0)} \\
  &= -\frac{1}{2} \inner{J_{\dot{\theta}(0)}x(0) +
    J_{\theta(0)}\dot{x}(0)}{\dot{x}'(0)} +\frac{1}{2} 
  \inner{J_{\dot{\theta}'(0)}x'(0) +
    J_{\theta'(0)}\dot{x}'(0)}{\dot{x}(0)} \\
  &\quad +
  \inner{\dot{\theta}(0)}{\dot{z}'(0)} -
  \inner{\dot{\theta}'(0)}{\dot{z}(0)} \\
  &= \frac{1}{2} \inner{x(0)}{J_{\dot{\theta}(0)} \dot{x}'(0) +
    J_{\dot{\theta}'(0)} \dot{x}(0)} + \inner{J_{\theta(0)} \dot{x}'(0)}{\dot{x}(0)} \\
  &\quad +
  \inner{\dot{\theta}(0)}{\dot{z}'(0)} -
  \inner{\dot{\theta}'(0)}{\dot{z}(0)}
\end{align*}

For arbitrary $u \in \R^m$, take $\gamma'(t) = (x(0), z(0)+tu,
\theta(0))$; then $0 = \omega(\dot{\gamma}(0), \dot{\gamma}'(0)) =
\inner{\dot{\theta}(0)}{u}$, so we must have $\dot{\theta}(0)=0$.
Next, for arbitrary $v \in \R^{2n}$, take $\gamma'(t) = (x(0) + tv,
z(0), \theta(0))$; then we have $0 =
\inner{J_{\theta(0)}u}{\dot{x}(0)}$.  But $\theta(0)\ne 0$ by
assumption, so $J_{\theta_0}$ is nonsingular and we must have
$\dot{x}(0)=0$.  Finally, take $\gamma'(t) = (x(0), z(0),
\theta(0)+tu)$; then $\inner{u}{\dot{z}(0)}=0$, so $\dot{z}(0)=0$.
Thus we have shown that if $\omega(\dot{\gamma}(0), \dot{\gamma}'(0))
= 0$ for all $\gamma'$, we must have $\dot{\gamma}(0)=0$, which
completes the proof.
\end{proof}

We now proceed to compute and solve Hamilton's equations of motion for
an H-type group $G$.

The subriemannian Hamiltonian on $T^*G$ is defined by (c.f. (\ref{hamiltonian-subriemannian}))
\begin{equation}\label{hamiltonian-h-type}
  H(p_g) := \frac{1}{2}\sum_{i=1}^{2n} p_g(X_i(g))^2, \quad p_g \in T^*_g G.
\end{equation}

In terms of the above coordinates, we may compute
\begin{equation*}
  p_g(X_i(g)) = p_g\left(\frac{\partial}{\partial
    x^i} + \frac{1}{2} \sum_j \inner{J_{u_j} x}{e_i}
  \frac{\partial}{\partial z^j}\right) = \xi_i(p_g) + \frac{1}{2}\inner{J_{\eta(p_g)}
    x(g)}{e_i}
\end{equation*}
so that
\begin{equation*}
  H(p_g) = \frac{1}{2}\abs{\xi(p_g) + \frac{1}{2} J_{\eta(p_g)} x(g)}^2.
\end{equation*}

Recall that a path $\gamma : [0,T] \to T^*Q$ satisfies
Hamilton's equations iff $\dot{\gamma}(t) =
X_H(\gamma(t))$, i.e. $dH_{\gamma(t)} + \omega(\dot{\gamma}(t),
\cdot) = 0$.

In an $H$-type group $G$, we write $\gamma$ in coordinates as
$\gamma(t) = (x(t), z(t), \xi(t), \eta(t)) : [0,T] \to T^*G$, so that
we have
\begin{align*}
  \omega(\dot{\gamma}(t), \cdot) = \sum_i (\dot{\xi}_i(t) dx^i -
  \dot{x}^i(t) d\xi_i) + \sum_j (\dot{\eta}_j(t) dz^j - \dot{z}^j(t) d\eta_j).
\end{align*}
Thus Hamilton's equations of motion read
\begin{equation}
  \dot{x}^i = \frac{\partial H}{\partial \xi^i}, \quad
  \dot{\xi}_i = -\frac{\partial H}{\partial x^i}, \quad
  \dot{z}_j = \frac{\partial H}{\partial \eta^j},\quad
  \dot{\eta}_j = -\frac{\partial H}{\partial z^j}.
\end{equation}

To compute the derivatives, we note that $\frac{1}{2} \grad_x
\abs{Ax+y}^2 = A^* Ax + A^*y$.  If we write $B_x \eta =
J_\eta x$, then $\inner{B_x \eta}{y} = \inner{\eta}{[x,y]}$,
so $B_x^* = [x,\cdot]$, and $B_x^* B_x = \abs{x}^2 I$.  So for a
path $\gamma(t) = (x(t), z(t), \xi(t), \eta(t)) : [0,T] \to T^*G$,
Hamilton's equations of motion read
\begin{align}
  \dot{x} &= \grad_\xi H = \xi + \frac{1}{2} J_\eta x \label{Hx} \\
  \dot{z} &= \grad_\eta H = \frac{1}{2} \grad_\eta \abs{\xi + \frac{1}{2} B_x \eta}^2 =
  \frac{1}{4} \abs{x}^2 \eta + \frac{1}{2}[x,\xi] \label{Hz} \\
  \dot{\xi} &= -\grad_x H = -\frac{1}{4}\abs{\eta}^2 x + \frac{1}{2}J_\eta \xi \label{Hxi}\\ 
  \dot{\eta} &= -\grad_z H = 0. \label{Heta}
\end{align}

\begin{theorem}\label{ham-soln}
  $(x(t), z(t))$ is the projection of a solution to Hamilton's
  equations with $x(0)=0, z(0)=0$ and $x(1), z(1)$ given if and only if:
  \begin{enumerate}
  \item If $z(1)=0$, we have
    \begin{equation}\label{ham-line}
      x(t) = tx(1),\quad z(t)=0.
    \end{equation}
  \item If $z(1) \ne 0$, we have
\begin{align}
  x(t) &= 
  \frac{1}{\abs{\eta_0}^2}J_{\eta_0}(I-e^{tJ_{\eta_0}})\xi_0 \label{ham-arc-x}
  \\  
  z(t) &=
  \frac{\abs{\xi_0}^2}{2\abs{\eta_0}^3}(\abs{\eta_0}t-\sin(\abs{\eta_0}t))
  \eta_0 \label{ham-arc-z}
\end{align}
where, if $x(1) \ne 0$ we have
\begin{align}
  \eta_0 &= 2\theta \frac{z(1)}{\abs{z(1)}} \\
  \xi_0 &= -\abs{\eta_0}^2 (J_{\eta_0} (e^{J_{\eta_0}}-I))^{-1} x(1).
       \intertext{where $\theta$ is a solution to}
       \nu(\theta) &= \frac{4\abs{z(1)}}{\abs{x(1)}^2}; \label{nu-dist}
\end{align}
and if $x(1) = 0$ we have
\begin{align*}
  \eta_0 &= 2 \pi k \frac{z(1)}{\abs{z(1)}} \\
  \abs{\xi_0} &= \sqrt{4 k \pi \abs{z(1)}}
\end{align*}
for some integer $k \ge 1$.
  \end{enumerate}
\end{theorem}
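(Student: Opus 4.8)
The plan is to solve Hamilton's equations (\ref{Hx})--(\ref{Heta}) explicitly and then translate the solution into boundary-value form. Equation (\ref{Heta}) immediately gives $\eta(t) \equiv \eta_0 := \eta(0)$. Substituting $\xi = \dot{x} - \tfrac{1}{2} J_{\eta_0} x$ (from (\ref{Hx})) into (\ref{Hxi}) and using $J_{\eta_0}^2 = -\abs{\eta_0}^2 I$ (Proposition \ref{Jz-props}, item \ref{Jz-square}), the cross terms cancel and the $(x,\xi)$-system reduces to the single second-order equation $\ddot{x} = J_{\eta_0}\dot{x}$. Since $x(0)=0$ we have $\dot{x}(0) = \xi_0 := \xi(0)$, hence $\dot{x}(t) = e^{tJ_{\eta_0}}\xi_0$ and $x(t) = \int_0^t e^{sJ_{\eta_0}}\xi_0\,ds$; this is $t\xi_0$ when $\eta_0 = 0$ and, using $J_{\eta_0}^{-1} = -\abs{\eta_0}^{-2}J_{\eta_0}$, equals $\abs{\eta_0}^{-2}J_{\eta_0}(I - e^{tJ_{\eta_0}})\xi_0$ when $\eta_0 \ne 0$, giving (\ref{ham-line}) and (\ref{ham-arc-x}).

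For $z$, I would first show $\dot{z} = \tfrac{1}{2} [x,\dot{x}]$: substituting $\xi = \dot{x} - \tfrac{1}{2} J_{\eta_0} x$ into (\ref{Hz}) and using $[x, J_{\eta_0} x] = \abs{x}^2 \eta_0$ (Proposition \ref{Jz-props}, item \ref{Jz-bracket}) cancels the $\abs{x}^2\eta_0$ contributions. It then remains to compute $[x(t),\dot{x}(t)]$ from the explicit horizontal curve. Writing $A := J_{\eta_0}$ and decomposing $\R^m = \R\eta_0 \oplus \eta_0^\perp$, the $\eta_0$-direction is handled by $\inner{\hat{\eta}_0}{[u,v]} = \abs{\eta_0}^{-1}\inner{Au}{v}$ together with $e^{sA} = \cos(s\abs{\eta_0}) I + \abs{\eta_0}^{-1}\sin(s\abs{\eta_0}) A$, which turns $\inner{\hat{\eta}_0}{\dot{z}}$ into the elementary integrand $\tfrac{1}{2}\abs{\eta_0}^{-1}(1-\cos(s\abs{\eta_0}))\abs{\xi_0}^2$; integrating from $0$ gives exactly (\ref{ham-arc-z}). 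For $w \in \eta_0^\perp$ one has $\inner{w}{[u,v]} = \inner{J_w u}{v}$, and $J_w$ anticommutes with $A$ (Proposition \ref{Jz-props}, item \ref{Jz-clifford}), so $J_w e^{sA} = e^{-sA} J_w$; combined with $\inner{J_w\xi_0}{A\xi_0} = \inner{w}{\eta_0}\abs{\xi_0}^2 = 0$ (item \ref{Jz-inner}) and skew-symmetry of $J_w$, every $\eta_0^\perp$-component of $[x(t),\dot{x}(t)]$ vanishes, so $z(t)$ is a multiple of $\eta_0$ as claimed.

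To obtain the stated boundary-value description, note first that if $\eta_0 = 0$ then $z \equiv 0$ and $x(t) = t x(1)$; conversely, if $z(1)=0$ then (\ref{ham-arc-z}) together with the strict inequality $\abs{\eta_0} > \sin\abs{\eta_0}$ for $\abs{\eta_0}>0$ forces $\xi_0 = 0$ or $\eta_0 = 0$, and in either case the solution has the form (\ref{ham-line}). If $z(1)\ne 0$ then (\ref{ham-arc-z}) shows $\eta_0$ is a positive multiple of $z(1)$, so $\eta_0 = 2\theta\, z(1)/\abs{z(1)}$ for some $\theta > 0$; computing $\abs{x(1)}^2 = 2\abs{\xi_0}^2(1-\cos\abs{\eta_0})/\abs{\eta_0}^2$ and $\abs{z(1)} = \abs{\xi_0}^2(\abs{\eta_0}-\sin\abs{\eta_0})/(2\abs{\eta_0}^2)$ and dividing yields $4\abs{z(1)}/\abs{x(1)}^2 = (\abs{\eta_0}-\sin\abs{\eta_0})/(1-\cos\abs{\eta_0}) = \nu(\theta)$ by the double-angle form of (\ref{nu-def}), so $\theta$ solves (\ref{nu-dist}) (and such a $\theta$ exists since $\nu$ is continuous with $\nu(0^+)=0$ and $\nu(\pi^-)=+\infty$). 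When $x(1)\ne 0$, $e^{J_{\eta_0}}-I$ has nonzero eigenvalues $e^{\pm i\abs{\eta_0}}-1$ and is invertible, so solving (\ref{ham-arc-x}) at $t=1$ for $\xi_0$ gives the stated formula; when $x(1)=0$, this invertibility fails exactly for $\abs{\eta_0} = 2\pi k$ with $k\ge 1$ an integer, and then (\ref{ham-arc-z}) reduces to $\abs{z(1)} = \abs{\xi_0}^2/(4\pi k)$, i.e. $\abs{\xi_0} = \sqrt{4k\pi\abs{z(1)}}$. The converse (``if'') direction is this computation in reverse: given $(\eta_0,\xi_0)$ of the prescribed form, set $\eta(t) := \eta_0$ and $\xi(t) := e^{tJ_{\eta_0}}\xi_0 - \tfrac{1}{2} J_{\eta_0} x(t)$, and check directly that $(x,z,\xi,\eta)$ satisfies (\ref{Hx})--(\ref{Heta}) with endpoint $(x(1), z(1))$.

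I expect the vertical computation to be the main obstacle: in contrast to the Heisenberg case ($m=1$), here one must genuinely use the Clifford identities of Proposition \ref{Jz-props} to see that $[x(t),\dot{x}(t)]$ is parallel to $\eta_0$ and to reduce it to a scalar $(\theta-\sin\theta)$-type integral. A secondary source of bookkeeping is tracking operator norms of $J_{\eta_0}$ and $e^{J_{\eta_0}}$ and the invertibility of $J_{\eta_0}(e^{J_{\eta_0}}-I)$, which is what ultimately produces the function $\nu$ and the separate $x(1)=0$ case.
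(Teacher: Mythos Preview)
Your argument is correct and follows essentially the same path as the paper's proof: reduce to $\ddot{x}=J_{\eta_0}\dot{x}$, integrate to get $x(t)$, show $\dot{z}=\tfrac12[x,\dot{x}]$, compute $z(t)$, and then solve for $(\eta_0,\xi_0)$ in terms of the endpoint. The one place you diverge is the computation of $z(t)$: the paper expands $[x(t),\dot{x}(t)]$ directly using $e^{tJ_{\eta_0}}=\cos(|\eta_0|t)I+|\eta_0|^{-1}\sin(|\eta_0|t)J_{\eta_0}$ together with the single identity $[u,J_{\eta_0}u]=|u|^2\eta_0$ (Proposition~\ref{Jz-props}\,(\ref{Jz-bracket})), so every bracket that appears is already a multiple of $\eta_0$ and the $(1-\cos)$ integrand falls out immediately; your decomposition $\R^m=\R\eta_0\oplus\eta_0^\perp$ plus the Clifford anticommutation $J_wJ_{\eta_0}=-J_{\eta_0}J_w$ for $w\perp\eta_0$ reaches the same conclusion but is a slightly longer route to the fact that $[x(t),\dot{x}(t)]\parallel\eta_0$. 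Either way works; the paper's is marginally more economical, while yours makes the role of the H-type structure more visible.
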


\begin{proof}
  
We solve (\ref{Hx}--\ref{Heta}), assuming $x(0)=0$, $z(0)=0$.  By
(\ref{Heta}) we have $\eta(t) \equiv \eta(0) = \eta_0$.  If $\eta_0 =
0$, we can see by inspection that the solution is 
\begin{equation}\label{eta0-0-soln}
\eta(t) = 0, \quad \xi(t) = \xi_0, \quad x(t)=t\xi_0, \quad z(t)=0,
\end{equation}
namely, a straight line from the origin, whose length is clearly
$\abs{x(1)}$.  This is (\ref{ham-line}), which
we shall see is forced when $z(1)=0$.

Otherwise, assume $\eta_0 \ne 0$.  We may solve (\ref{Hx}) for $\xi$
to see that
\begin{equation} \label{xi-solved}
\xi = \dot{x} - \frac{1}{2} J_{\eta_0} x.
\end{equation}  
Notice that substituting (\ref{xi-solved}) into (\ref{Hz}) shows that
\begin{equation}\label{zdot-bracket}
  \dot{z} = \frac{1}{2}[x, \dot{x}],
\end{equation}
from which an easy computation
verifies that $(x(t),z(t))$ is indeed a horizontal path.

Substituting (\ref{xi-solved}) into the right side of (\ref{Hxi}) shows that
\begin{equation*}
  \dot{\xi} = -\frac{1}{4} \abs{\eta_0}^2 x + \frac{1}{2} J_{\eta_0}
  (\dot{x} - \frac{1}{2} J_{\eta_0} x) =
  \frac{1}{2} J_{\eta_0} \dot{x}
\end{equation*}
since $J_{\eta_0}^2 x = -\abs{\eta_0}^2 x$.  Thus 
\begin{equation}\label{Hxi-x}
  \xi = \frac{1}{2} J_{\eta_0} x + \xi_0
\end{equation}
where $\xi_0 = \xi(0)$.  If $\xi_0 = 0$, it is easily seen that we
have the trivial solution $x(t)=0$, $z(t)=0$, $\xi(t)=0$, $\eta(t)=\eta_0$,
so we assume now that $\xi_0 \ne 0$.  (\ref{Hxi-x}) may be substituted back
into (\ref{Hx}) to get
\begin{equation}
  \dot{x} =  J_{\eta_0} x + \xi_0
\end{equation}
so that
\begin{equation}\label{Hxsol}
  x = (J_{\eta_0})^{-1} (e^{tJ_{\eta_0}}-I)\xi_0 =
  -\frac{1}{\abs{\eta_0}^2}J_{\eta_0}(e^{tJ_{\eta_0}}-I)\xi_0.
\end{equation}
Differentiation (or substitution) shows 
\begin{equation}\label{xdot}
\dot{x} = e^{tJ_{\eta_0}} \xi_0.
\end{equation}

Note that
\begin{equation}\label{normx-formula}
  \abs{x}^2 = \frac{1}{\abs{\eta_0}^2}
  \abs{(e^{tJ_{\eta_0}}-I)\xi_0}^2 = \frac{2}{\abs{\eta_0}^2}(1-\cos(\abs{\eta_0}t))\abs{\xi_0}^2.
\end{equation}

It is easy to see from (\ref{Hxsol}) that $x(t)$ lies in the plane
spanned by $\xi_0$ and $J_{\eta_0} \xi_0$, and $x(t)$ sweeps out a
circle centered at $\frac{1}{\abs{\eta_0}^2}J_{\eta_0} \xi_0$ and
passing through the origin.  In particular, the radius of the circle
is ${\abs{\xi_0}}/{\abs{\eta_0}}$.

Now substituting (\ref{Hxsol}) and (\ref{xdot}) into
(\ref{zdot-bracket}), we have
\begin{align*}
  \dot{z} &= -\frac{1}{2 \abs{\eta_0}^2}\left([J_{\eta_0} e^{tJ_{\eta_0}}
    \xi_0, e^{tJ_{\eta_0}} \xi_0] - [J_{\eta_0} \xi_0, e^{tJ_{\eta_0}}
    \xi_0]\right) \\
  &= \frac{1}{2 \abs{\eta_0}^2}\left(\abs{e^{tJ_{\eta_0}}
    \xi_0}^2 \eta_0 + [J_{\eta_0}\xi_0, e^{tJ_{\eta_0}} \xi_0]\right) \\
  &= \frac{\abs{\xi_0}^2}{2 \abs{\eta_0}^2}\left(1 -
  \cos(\abs{\eta_0} t)\right) \eta_0
\end{align*}

By integration,
\begin{equation}\label{z-formula}
  z = \frac{\abs{\xi_0}^2}{2\abs{\eta_0}^3}(\abs{\eta_0}t-\sin(\abs{\eta_0}t)) \eta_0.
\end{equation}
In particular,
\begin{equation}\label{normz-formula}
  \abs{z} = \frac{\abs{\xi_0}^2}{2\abs{\eta_0}^2}( \abs{\eta_0}t-\sin(\abs{\eta_0}t)).
\end{equation}
We note that inspection of (\ref{normz-formula}) shows that $z(t) \ne
0$ for $t > 0$.  Thus the only solution with $z(1)=0$ is that of
(\ref{ham-line}). 

To make more sense of this, let $r=\abs{\xi_0}/\abs{\eta_0}$ be the
radius of the arc swept out by $x(t)$, and $\phi =  \abs{\eta_0} t$
be the angle subtended by the arc.  Then
\begin{equation*}
  \abs{z} = \frac{1}{2} r^2 \phi - \frac{1}{2} r^2 \sin \phi
\end{equation*}
which is the area of the region between an arc of radius $r$
subtending an angle $\phi$ and the chord which spans it.

We must determine $\xi_0, \eta_0$ in terms of $x(1), z(1)$.  We have
already ruled out the case $z(1)=0$.  If $x(1)=0$, then
(\ref{normx-formula}) shows we must have $\abs{\eta_0} = 2 k \pi$ for
some integer $k \ge 1$.  (\ref{z-formula}, \ref{normz-formula}) then
shows $\eta_0 = 2k \pi z(1) / \abs{z(1)}$, and $\abs{\xi_0} =
\sqrt{4k\pi\abs{z(1)}}$, as desired.  In this case the direction of
$\xi_0$ is not determined and $\xi_0$ may be any vector with the given
length.

On the other hand, if $x(1) \ne 0$, then $\abs{\eta_0}$ is not an
integer multiple of $2\pi$, so we may divide (\ref{normz-formula}) by
(\ref{normx-formula}) to obtain
\begin{equation}\label{eta0-eqn}
  \frac{\abs{z(1)}}{\abs{x(1)}^2} = \frac{\abs{\eta_0} - \sin
    \abs{\eta_0}}{4(1-\cos \abs{\eta_0})} = \frac{1}{4}
  \nu(\theta)
\end{equation}
taking $\theta = \frac{1}{2} \abs{\eta_0}$, where $\nu$ is as in (\ref{nu-def}).
Then by (\ref{normx-formula}) we have
\begin{equation}\label{norm-xi0}
  \abs{\xi_0}^2 = \frac{1}{2} \abs{x(1)}^2 \frac{
    \abs{\eta_0}^2}{1-\cos(\abs{\eta_0})} = \abs{x(1)}^2
  \frac{\theta^2}{\sin^2\theta}.
\end{equation}

Note that once the magnitudes of $\eta_0$, $\xi_0$ are known, their
directions are determined: $\eta_0 = z(1) \abs{\eta_0} / \abs{z(1)}$
by (\ref{z-formula}), while $\xi_0$ can be recovered from (\ref{Hxsol}):
\begin{align*}
  \xi_0 &= -\eta_0^2 (J_{\eta_0} (e^{J_{\eta_0}}-I))^{-1} x(1).
\end{align*}
So $\eta_0, \xi_0$ and hence $x(t), z(t)$ are all determined by a
choice of $\abs{\eta_0}$ satisfying (\ref{eta0-eqn}).  Writing $\theta
= \abs{\eta_0}$ gives (\ref{ham-arc-x}--\ref{ham-arc-z}).

The ``if'' direction of the theorem requires verifying that the given
formulas in fact satisfy Hamilton's equations, which is routine.
\end{proof}


To prove Theorem \ref{distance}, we must now decide which of the
solutions given in Theorem \ref{ham-soln} is the shortest, and compute
its length.  We collect, for future reference, some facts about the
function $\nu$ of (\ref{nu-def}).

\begin{lemma}\label{nu-increase}
  There is a constant $c > 0$ such that $\nu'(\theta) > c$ for all
  $\theta \in [0,\pi)$.
\end{lemma}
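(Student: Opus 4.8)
The plan is to differentiate $\nu$ once explicitly, factor the result so that its sign is transparent, and then upgrade pointwise positivity to a uniform bound by treating the two ends of the interval $[0,\pi)$ separately.

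First I would use the third expression in (\ref{nu-def}), $\nu(\theta) = -\frac{d}{d\theta}[\theta\cot\theta]$, so that differentiating once more and simplifying with the quotient rule yields
\begin{equation*}
  \nu'(\theta) = \frac{2(\sin\theta - \theta\cos\theta)}{\sin^3\theta}.
\end{equation*}
Writing $g(\theta) := \sin\theta - \theta\cos\theta$ for the numerator, one has $g(0) = 0$ and $g'(\theta) = \theta\sin\theta$, which is strictly positive on $(0,\pi)$; hence $g$ is strictly increasing on $[0,\pi]$ and $g(\theta) > 0$ for $\theta \in (0,\pi]$. Since $\sin^3\theta > 0$ on $(0,\pi)$ as well, this already shows $\nu'(\theta) > 0$ for every $\theta \in (0,\pi)$.

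To handle the left endpoint I would note that $\theta \mapsto \theta\cot\theta$ is real-analytic on $(-\pi,\pi)$ (its poles lie at the nonzero integer multiples of $\pi$), so $\nu = -(\theta\cot\theta)'$ is analytic there too; in particular $\nu'$ is continuous on all of $[0,\pi)$, and a short Taylor expansion at $0$ gives $\nu'(0) = \tfrac{2}{3} > 0$. Thus $\nu' > 0$ on the whole of $[0,\pi)$. For the uniform bound I would use the right endpoint: as $\theta \to \pi^-$ the numerator $2g(\theta) \to 2\pi > 0$ while $\sin^3\theta \to 0^+$, so $\nu'(\theta) \to +\infty$; choose $\delta \in (0,\pi)$ with $\nu'(\theta) \ge 1$ for $\theta \in [\pi - \delta, \pi)$. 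On the compact interval $[0,\pi-\delta]$ the continuous, strictly positive function $\nu'$ attains a positive minimum $c_0 > 0$, and then $c := \tfrac12\min(c_0,1)$ works. None of this is delicate; the only place that needs a moment's care is the removable singularity of $\nu'$ at $\theta = 0$, which is why I prefer to invoke analyticity of $\theta\cot\theta$ rather than manipulate the quotient there directly.
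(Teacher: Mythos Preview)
Your proof is correct and follows essentially the same approach as the paper's: both compute $\nu'(\theta) = \frac{2(\sin\theta-\theta\cos\theta)}{\sin^3\theta}$, verify positivity via $g(0)=0$ and $g'(\theta)=\theta\sin\theta>0$, obtain $\nu'(0)=2/3$ by Taylor expansion, and then invoke continuity together with $\nu'(\theta)\to+\infty$ as $\theta\to\pi^-$ to get the uniform lower bound. Your version is slightly more explicit about the analyticity at $0$ and the compactness argument near $\pi$, but the substance is identical.
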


\begin{proof}
  By direct computation, $\nu'(\theta) =
  \frac{2(\sin\theta-\theta\cos\theta)}{\sin^3\theta}$.  By Taylor
  expansion of the numerator and denominator we have $\nu'(0) = 2/3 >
  0$.  For all $\theta \in (0,\pi)$ we have $\sin^3\theta > 0$, so it
  suffices to consider $y(\theta) := \sin\theta-\theta\cos\theta$.
  Now $y(0)=0$ and $y'(\theta) = \theta\sin\theta > 0$ for $\theta \in
  (0,\pi)$, so $y(\theta) > 0$ for $\theta \in (0,\pi)$.  Thus
  $\nu'(\theta) > 0$ for $\theta \in [0,\pi)$, and continuity and the
    fact that $\lim_{\theta \uparrow \pi} \nu'(\theta) = +\infty$
    establishes the existence of the constant $c$.
\end{proof}

\begin{corollary}\label{nu-c}
  $\nu(\theta) \ge c\theta$ for all
  $\theta \in [0,\pi)$, where $c$ is the constant from Lemma \ref{nu-increase}.
\end{corollary}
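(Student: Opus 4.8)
The statement to prove is Corollary~\ref{nu-c}: $\nu(\theta) \ge c\theta$ for all $\theta \in [0,\pi)$, where $c$ is the constant from Lemma~\ref{nu-increase} satisfying $\nu'(\theta) > c$ on $[0,\pi)$.

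The plan is to integrate the derivative bound from Lemma~\ref{nu-increase}. First I would observe that $\nu(0) = 0$: this follows from the alternate form $\nu(\theta) = \frac{\theta}{\sin^2\theta} - \cot\theta$ together with a Taylor expansion near $0$ (the leading terms of $\frac{\theta}{\sin^2\theta}$ and $\cot\theta$ both behave like $\frac{1}{\theta}$ and cancel, leaving $\nu(0) = 0$), or alternatively from the first form $\nu(\theta) = \frac{2\theta - \sin 2\theta}{1 - \cos 2\theta}$, whose numerator and denominator both vanish to orders $3$ and $2$ respectively at $\theta = 0$, giving limit $0$. Since the excerpt already treats $\nu$ as a well-defined differentiable function on $[0,\pi)$ with $\nu'(0) = 2/3$, I can take $\nu(0) = 0$ as essentially immediate.

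Then, for any $\theta \in [0,\pi)$, I would apply the fundamental theorem of calculus:
\begin{equation*}
  \nu(\theta) = \nu(0) + \int_0^\theta \nu'(s)\,ds = \int_0^\theta \nu'(s)\,ds \ge \int_0^\theta c\,ds = c\theta,
\end{equation*}
using Lemma~\ref{nu-increase} to bound the integrand below by $c$ on the whole interval $[0,\theta] \subseteq [0,\pi)$. This completes the proof.

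There is no real obstacle here; the only minor point requiring a word is the evaluation $\nu(0) = 0$, which is handled by the double-angle/Taylor remark already made in the statement of Theorem~\ref{distance}. The corollary is an immediate consequence of the mean value inequality applied to Lemma~\ref{nu-increase}.
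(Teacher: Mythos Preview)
Your proof is correct and follows exactly the same approach as the paper: integrate the inequality $\nu'(\theta) > c$ from Lemma~\ref{nu-increase}, using $\nu(0)=0$. The paper's proof is a one-line remark to this effect, so your version simply spells out the same argument in more detail.
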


\begin{proof}
  Integrate the inequality in Lemma \ref{nu-increase}.  Note that $\nu(0)=0$.
\end{proof}

\begin{proof}[Proof of Theorem \ref{distance}]
  We compute the lengths of the paths given in Lemma \ref{ham-soln}.
  The $z=0$ case is obvious.  Observe that for a horizontal path
  $\sigma(t) = (x(t), z(t))$, we have $\dot{\sigma}(t) =
  \sum_{i=1}^{2n} \dot{x}^i(t) X_i(\gamma(t))$, so that
  $\norm{\dot{\sigma}(t)} = \abs{\dot{x}(t)}$.  For paths solving
  Hamilton's equations, (\ref{xdot}) shows that $\abs{\dot{x}(t)} =
  \abs{\xi_0}$, so $\ell(\gamma)=\abs{\xi_0}$.  In the case $x=0$, we
  have $\abs{\xi_0} = \sqrt{4k\pi\abs{z(1)}}$, where $k$ may be any
  positive integer; clearly this is minimized by taking $k=1$.

  Now we must handle the case $x \ne 0$, $z \ne 0$.  In this case we
  have $\ell(\gamma) = \abs{\xi_0} = \abs{x}
  \frac{\theta}{\sin\theta}$, by (\ref{norm-xi0}), where $\theta$
  solves (\ref{nu-dist}) (recall $\theta = \frac{1}{2} \abs{\eta_0}$).  The
  function $\nu$ has $\nu(0)=0$, $\nu(\pi)=+\infty$, and by Lemma
  \ref{nu-increase} $\nu$ is strictly increasing on $[0,\pi)$.
    \ignorethis{(]} Thus among the solutions of (\ref{nu-dist}) there
  is exactly one in $[0,\pi)$.  \ignorethis{(]} We show this is the
  solution that minimizes $\left(\frac{\theta}{\sin\theta}\right)^2$
  and hence also minimizes $\ell(\gamma)$.

  For brevity, let $y=\frac{4 \abs{z}}{\abs{x}^2}$.  If $y \in
  [0,\pi/2]$ then $y=\nu(\theta)$ for a unique $\theta \in
  [0,\infty)$.  \ignorethis{(]} This is because $\nu(\theta) >
  \nu(\pi/2) = \pi/2$ for $\theta > \pi/2$.  Since $\theta$ is
  increasing on $[0,\pi)$ \ignorethis{(]} it suffices to show this for
  $\theta > \pi$.  But for such $\theta$ we have
  \begin{align*}
    \nu(\theta) = \frac{\theta - \sin\theta \cos\theta}{\sin^2\theta}
    \ge \frac{\theta - \frac{1}{2}}{\sin^2\theta} \ge \theta -
    \frac{1}{2} > \pi-\frac{1}{2} > \frac{\pi}{2}
  \end{align*}
  since $\sin\theta\cos\theta \le \frac{1}{2}$ for all $\theta$.
  
  Otherwise, suppose $y > \pi/2$.  Let
  \begin{equation*}
    F(\theta) :=
    \frac{\left(\frac{\theta}{\sin\theta}\right)^2}{\nu(\theta)} =
    \frac{\theta^2}{\theta - \sin\theta\cos\theta}
  \end{equation*}
  which is smooth on $(\pi/2,\infty)$ after removing the removable
  singularities.  We will show that if $\pi/2 < \theta_1 < \pi <
  \theta_2$, then $F(\theta_1) < F(\theta_2)$.  Thus if $\theta_1$ is
  the unique solution to $y=\nu(\theta)$ in $(\pi/2,\pi)$ and
  $\theta_2 > \pi$ is another solution, we will have
  \begin{equation*}
    \left(\frac{\theta_1}{\sin\theta_1}\right)^2 = \nu(\theta_1)
    F(\theta_1) = y F(\theta_1) < y F(\theta_2) = \nu(\theta_2)
    F(\theta_2) = \left(\frac{\theta_2}{\sin\theta_2}\right)^2
  \end{equation*}

Toward this end, we compute
  \begin{align*}
    F'(\theta) &= \frac{2\theta(\theta-\sin\theta\cos\theta) -
      \theta^2(1-\cos^2\theta+\sin^2\theta)}{(\theta-\sin\theta\cos\theta)^2}
    \\
    &= \frac{2\theta\cos\theta(\theta\cos\theta-\sin\theta)}{(\theta-\sin\theta\cos\theta)^2}.
  \end{align*}
  For $\theta \in (\pi/2,\pi)$ we have $\cos\theta < 0$, $\sin\theta >
  0$ and thus $F'(\theta) > 0$.  So $F(\theta_1) < F(\pi)$ and it
  suffices to show $F(\pi) = \pi < F(\theta_2)$.  We have $F'(\pi)=2>0$ so
  this is true for $\theta_2$ near $\pi$, and $F(+\infty)=+\infty$ so
  it is also true for large $\theta_2$.  To complete the argument we
  show that it holds at critical points of $F$.  Suppose
  $F'(\theta_c)=0$ where $\theta_c > \pi$; then either $\cos\theta_c=0$ or
  $\theta_c\cos\theta_c-\sin\theta_c=0$.  If the former then
  $F(\theta_c)=\theta_c>\pi$.  If the latter, then $\theta_c =
  \tan\theta_c$, so
  \begin{align*}
    F(\theta_c) = 
    \frac{\theta_c^2}{\theta_c - \sin\theta_c\cos\theta_c} =
    \frac{\theta_c^2}{\theta_c - \tan\theta_c \cos^2\theta_c} =
    \frac{\theta_c^2}{\theta_c(1- \cos^2\theta_c)} \ge \theta_c > \pi
  \end{align*}
which completes the proof.      
\end{proof}

\begin{notation}
  If $f,h : G \to \R$, we write $f(g) \asymp
h(g)$ to mean there exist finite positive constants $C_1, C_2$ such
that $C_1 h(g) \le f(g) \le C_2 h(g)$ for all $g \in G$, or some
specified subset thereof.
\end{notation}

\begin{corollary}\label{distance-estimate}
  $d(x,z) \asymp \abs{x} + \abs{z}^{1/2}$.  Equivalently, $d(x,z)^2
  \asymp \abs{x}^2 + \abs{z}$.
\end{corollary}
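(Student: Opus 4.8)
The plan is to read off the answer from the explicit formula of Theorem \ref{distance}, after reducing to the squared version. Since $\sqrt{a}+\sqrt{b}\asymp\sqrt{a+b}$ for $a,b\ge 0$ (with universal constants), it suffices to prove $d(x,z)^2 \asymp \abs{x}^2 + \abs{z}$. The two degenerate cases are immediate: if $z=0$ then $d(x,z)=\abs{x}$, and if $x=0$ then $d(x,z)=\sqrt{4\pi\abs{z}}=2\sqrt{\pi}\,\abs{z}^{1/2}$, so the claimed estimate holds there with explicit constants.

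For the main case $x\ne 0$, $z\ne 0$, I would let $\theta\in[0,\pi)$ be the unique solution of $\nu(\theta)=4\abs{z}/\abs{x}^2$ given by Theorem \ref{distance}, so that $d(x,z)^2=\abs{x}^2(\theta/\sin\theta)^2$ while $\abs{z}=\tfrac14\abs{x}^2\nu(\theta)$, hence $\abs{x}^2+\abs{z}=\abs{x}^2\bigl(1+\tfrac14\nu(\theta)\bigr)$. The estimate then reduces to showing that
\[
  g(\theta):=\frac{(\theta/\sin\theta)^2}{1+\tfrac14\nu(\theta)}
\]
is bounded above and below by positive constants for $\theta\in[0,\pi)$. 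To see this, note that $g$ is continuous and strictly positive on the open interval, since $(\theta/\sin\theta)^2$ is finite and positive there and $1+\tfrac14\nu(\theta)\ge 1$ by the nonnegativity of $\nu$ on $[0,\pi)$ (Corollary \ref{nu-c}). At the left endpoint, $(\theta/\sin\theta)^2\to 1$ and $\nu(\theta)\to 0$, so $g$ extends continuously with $g(0)=1$. At the right endpoint, writing $\theta=\pi-\epsilon$ and using $\sin\theta=\sin\epsilon$, both numerator and denominator blow up like $\epsilon^{-2}$: from $\nu(\theta)=(\theta-\sin\theta\cos\theta)/\sin^2\theta$ one gets $(\theta/\sin\theta)^2\sim \pi^2/\epsilon^2$ and $\tfrac14\nu(\theta)\sim \pi/(4\epsilon^2)$, so $g(\theta)\to 4\pi$ as $\theta\uparrow\pi$. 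Thus $g$ extends to a continuous, strictly positive function on the compact interval $[0,\pi]$, and therefore attains a positive minimum and a finite maximum there; taking these as $C_1$ and $C_2$ finishes the main case, and combining with the degenerate cases gives the corollary.

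I do not expect any genuine obstacle here: the only point requiring a little care is the asymptotic computation of $g$ near $\theta=\pi$, which is a routine Taylor expansion of $\sin$, $\cos$, and $\nu$. (Alternatively, one could extract the lower bound on $g$ directly from Corollary \ref{nu-c} together with $\theta/\sin\theta\ge 1$, and the upper bound from an elementary estimate $\nu(\theta)\ge c'(\theta/\sin\theta)^2$ near $\pi$, but the compactness argument handles both bounds uniformly at once and is cleaner to write.)
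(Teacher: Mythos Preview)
Your proof is correct and follows essentially the same approach as the paper: both reduce the estimate to showing that the ratio $(\theta/\sin\theta)^2/(1+c\,\nu(\theta))$ is bounded above and below by positive constants on $[0,\pi)$, and both verify this by checking continuity, positivity, and the finite positive limits at the endpoints. The only cosmetic differences are your explicit treatment of the degenerate cases (the paper dismisses them ``by continuity'') and the constant in the denominator ($1+\tfrac14\nu$ versus the paper's $1+\nu$, which is immaterial).
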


\begin{proof}
  By continuity we can assume $x \ne 0$, $z \ne 0$.  If $\theta$ is
  the unique solution in $[0,\pi)$ to
    $\nu(\theta)=\frac{4\abs{z}}{\abs{x}^2}$, we have $d(x,z)^2 =
    \abs{x}^2 \left(\frac{\theta}{\sin\theta}\right)^2$, so if we let
    \begin{equation}
      F(\theta) :=
      \frac{\left(\frac{\theta}{\sin\theta}\right)^2}{1+\nu(\theta)} =
      \frac{d(x,z)^2}{\abs{x}^2 + 4\abs{z}}
    \end{equation}
    it will be enough to show there exist $D_1, D_2$ with $0 < D_1 \le
    F(\theta) \le D_2$ for all $\theta \in [0,\pi)$.  $F$ is obviously
      continuous and positive on $(0,\pi)$.  We can simplify $F$ as
      \begin{equation*}
        F(\theta) = \frac{\theta^2}{\sin^2\theta + \theta - \sin\theta
        \cos\theta}
      \end{equation*}
      from which it is obvious that $\lim_{\theta \uparrow \pi}
      F(\theta) = \pi > 0$, and easy to compute that $\lim_{\theta
        \downarrow 0} F(\theta) = 1 > 0$, which is sufficient to
      establish the corollary.

\end{proof}

Results of this form apply to general stratified Lie groups.  A
standard argument, paraphrased from \cite{blu-book}, where many more
details can be found, is as follows.  Once it is known that $d$
generates the Euclidean topology on $G$, then $d(x,z)$ is a continuous
function which is positive except at $(0,0)$.  $d'(x,z) := \abs{x} +
\abs{z}^{1/2}$ is another such function, so the conclusion obviously
holds on the unit sphere of $d'$.  Now $d'(\varphi_\alpha(x,z)) =
\alpha d'(x,z)$, and inspection of (\ref{distance-formula}) shows that
the same holds for $d$, so for general $(x,z)$ it suffices to apply
the previous statement with $\alpha = d'(x,z)^{-1}$.

\section{The sublaplacian and heat kernel estimates}\label{sublaplacian-sec}

\begin{definition}
  The \emph{sublaplacian} $L$ for $G$ is
  the operator given by
  \begin{equation}\label{L-def}
    L = \sum_{i} X_i^2
  \end{equation}
  where $X_i$ are as given in (\ref{Xi-coords}).  The \emph{heat kernel}
  $p_t$ for $G$ is the unique fundamental solution to the
  corresponding heat equation $(L-\firstder{t})u = 0$; that is, $p_t =
  e^{tL}\delta_0$, where $\delta_0$ is the Dirac delta distribution
  supported at $0$.
\end{definition}

$L$ is obviously left-invariant.  $L$ is not strictly elliptic at any
point of $G$, but it is subelliptic everywhere.

If we view the left-invariant vector fields $\{X_i\}$ as elements of
the Lie algebra $\mathfrak{g}$ of $G$, they are an orthonormal basis
for $\mathfrak{z}^\perp$, which generates $\mathfrak{g}$: that is,
$\spanop\{X_i, [X_j, X_k] : i,j,k = 1,\dots, 2n\} = \mathfrak{g}$.
(It is easy to see that $L$ does not actually depend on the choice of
orthonormal basis $\{X_i\}$ for $\mathfrak{z}^\perp$, but only on the
inner product $\inner{\cdot}{\cdot}$ on $\mathfrak{g}$.) We thus have
$\spanop\{X_i(g), [X_j, X_k](g) : i,j,k = 1,\dots, 2n\} = T_g G$ for
each $g \in G$ (it is obvious for $g=0$, and for other $g$ it follows
by left invariance).  Thus the collection of vector fields $\{X_i\}$
is \emph{bracket generating}.  By a famous theorem of H\"ormander
(\cite{hormander67}), $L$ is hypoelliptic; that is, if $Lu$ is
$C^\infty$ on some open set, then so is $u$.  Another case of
H\"ormander's theorem applies to the operator $L -
\frac{\partial}{\partial t}$ on $G \times (0, \infty) = \{(g,t)\}$;
thus, since $(L - \frac{\partial}{\partial t}) p_t = 0$ is $C^\infty$,
$p_t$ itself is $C^\infty$ on $G \times (0, \infty)$.

Our next step is to record an explicit formula for $p_t(x,z)$.
Various derivations of this formula appear in the literature.  For
general step 2 nilpotent groups, \cite{gaveau77} derived such a formula
probabilistically from a formula in \cite{levy50} regarding the L\'evy
area process.  Another common approach, worked out in \cite{cygan}, involves expressing $p_t$ as the
Fourier transform of the Mehler kernel.  \cite{taylor} has a similar
computation.  \cite{randall} obtains the formula for H-type groups as
the Radon transform of the heat kernel for the Heisenberg group.
Other approaches have involved complex Hamiltonian mechanics
(\cite{bgg}), magnetic field heat kernels (\cite{klingler}), and
approximation of Brownian motion by random walks (\cite{hulanicki}).
In our notation, we find that
\begin{equation}\label{Rm-integral}
    p_t(x,z) = (2\pi)^{-m}(4\pi)^{-n} \int_{\R^m} e^{i\inner{\lambda}{z}-\frac{1}{4}\abs{\lambda} \coth(t \abs{\lambda})
    \abs{x}^2} \left(\frac{\abs{\lambda}}{\sinh( t
      \abs{\lambda})}\right)^{n} \,d\lambda.
\end{equation}
We can see directly by making the change of variables $\lambda =
\alpha^2 \lambda'$ (among other means) that
\begin{equation}\label{pt-dilate}
  p_t(x,z) = \alpha^{2(m+n)}p_{\alpha^2t}(\alpha x,\alpha^2 z)
  =\alpha^{2(m+n)}p_{\alpha^2t}(\varphi_\alpha(x,z)).
\end{equation}
In particular, taking $\alpha = t^{-1/2}$,
\begin{equation}\label{p1-dilate}
  p_t(x,z) ) = t^{-m-n}
  p_1(t^{-1/2}x, t^{-1}z) = t^{-m-n} p_1(\varphi_{t^{-1/2}}(x,z)).
\end{equation}
Therefore an estimate on $p_1$ will immediately give an estimate on
$p_t$ for all $t$, and we study $p_1$ from this point onward.

We immediately note that the integrand in (\ref{Rm-integral}) has even
real part and odd imaginary part, so that $p_1$ is indeed real.
Moreover, being the Fourier transform of a radial function, $p_1$ is
radial, i.e. $p_1(x,z)$ depends only on $\abs{x}$ and $\abs{z}$.
  So we
can apply (\ref{gradient-radial}) and differentiate under the integral
sign to get
\begin{align}
  \grad p_1(x,z) &= -\frac{1}{2}(2\pi)^{-m}(4\pi)^{-n}\abs{x} (q_1(x,z) \hat{x} + q_2(x,z)
  J_{\hat{z}}\hat{x}) \label{grad-q1-q2}\\
\intertext{where}
q_1(x,z) &= -\frac{2}{\abs{x}} \frac{\partial p_1(x,z)}{\partial
  \abs{x}} = \int_{\R^m} e^{{i} \inner{\lambda}{z} - \frac{1}{4} \abs{\lambda}\coth{\abs{\lambda}} \abs{x}^2}
\left(\frac{\abs{\lambda}}{\sinh(\abs{\lambda})}\right)^{n+1}\cosh(\abs{\lambda})\,d\lambda \label{q1-def}
\\
q_2(x,z) &=\frac{\partial p_1(x,z)}{\partial
  \abs{z}}= \int_{\R^m} e^{{i} \inner{\lambda}{z} - \frac{1}{4} \abs{\lambda}\coth{\abs{\lambda}} \abs{x}^2}
\left(\frac{\abs{\lambda}}{\sinh(\abs{\lambda})}\right)^{n} (-i)
\inner{\lambda}{\hat{z}}\,d\lambda \label{q2-def}
\end{align}

As before, (\ref{q1-def}) and (\ref{q2-def}) do not really depend on
$\hat{z}$ but only on $\abs{x},\abs{z}$.

We now state the main theorem of this paper: the precise
estimates on $p_t$ and its gradient.  The proofs will occupy the
remainder of the paper.

\begin{theorem}\label{main-theorem}
  There exists $d_0 > 0$ such that
  \begin{equation}\label{main-theorem-eqn}
  p_1(x,z) \asymp \frac{d(x,z)^{2n-m-1}}{1+(\abs{x}d(x,z))^{n-\frac{1}{2}}}
    e^{-\frac{1}{4}d(x,z)^2}.
  \end{equation}
  for $d(x,z) \ge d_0$.
\end{theorem}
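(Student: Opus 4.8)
The plan is to analyze the explicit formula (\ref{Rm-integral}) at $t=1$ by the method of steepest descent, in the spirit of \cite{li-heatkernel} and \cite{bgg}. The first step is to exploit the radial symmetry of the integrand in $\lambda$: writing it as $e^{i\inner{\lambda}{z}}\Psi(\abs{\lambda})$ with $\Psi(r) = (r/\sinh r)^n e^{-\frac14 r\coth r\,\abs{x}^2}$ and integrating over the sphere $S^{m-1}$ collapses (\ref{Rm-integral}) to a one-dimensional integral
\begin{equation*}
  p_1(x,z) = c_{m,n}\int_0^\infty \Psi(r)\,(r\abs{z})^{1-m/2}\,J_{m/2-1}(r\abs{z})\,r^{m-1}\,dr,
\end{equation*}
where $J_{m/2-1}$ is a Bessel function (for $m=1$ this is simply $\int_{-\infty}^\infty \Psi(r)\cos(r\abs{z})\,dr$). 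By Corollary \ref{distance-estimate}, the hypothesis $d(x,z)\ge d_0$ forces $\abs{x}^2+\abs{z}$ to be large, so the exponential in $\Psi$ carries a large parameter, and I would rescale the variable of integration so that $d(x,z)^2$ appears explicitly as that parameter.

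The second step is to locate the relevant saddle. Using $\sinh(i\theta)=i\sin\theta$ and $\coth(i\theta)=-i\cot\theta$, the holomorphic phase $\tau\mapsto i\tau\abs{z}-\frac14\tau\coth\tau\,\abs{x}^2$ has a critical point on the imaginary axis at $\tau_c=i\theta$, where the critical-point equation is precisely $\nu(\theta)=4\abs{z}/\abs{x}^2$ of (\ref{nu-dist}); thus $\theta$ is exactly the geodesic parameter of Theorem \ref{distance}. A short computation using the identity $\nu(\theta)=\theta/\sin^2\theta-\cot\theta$ from (\ref{nu-def}) shows the phase at $\tau_c$ equals $-\frac14\abs{x}^2(\theta/\sin\theta)^2=-\frac14 d(x,z)^2$, while $\Psi(\tau_c)$ contributes the factor $(\theta/\sin\theta)^n=(d/\abs{x})^n$. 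I would then represent $J_{m/2-1}$ by a Hankel/contour integral, deform the $r$-contour onto a steepest-descent path through $\tau_c$, and apply Laplace's method; the only obstruction to the deformation is the pole of $(\tau/\sinh\tau)^n$ at $\tau=\pm i\pi$, which is where the constraint $\theta\in[0,\pi)$ of Theorem \ref{distance} enters. Collecting the Hessian factor, the Jacobian of the sphere reduction, the factor $(\theta/\sin\theta)^n$, and the large-argument asymptotics of $J_{m/2-1}$ should reproduce the claimed prefactor $d^{2n-m-1}/(1+(\abs{x}d)^{n-1/2})$; as consistency checks, when $\abs{x}\asymp d$ this reduces to $d^{-m}$, matching a direct Laplace estimate of $p_1(x,0)$, and as $\abs{x}/d\to 0$ it reduces to $d^{2n-m-1}$, matching the $x=0$ behaviour with $d^2=4\pi\abs{z}$.

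The main obstacle is \emph{uniformity}. The estimate must hold uniformly over all $(x,z)$ with $d\ge d_0$, and there is one genuine degeneration to confront: as $\abs{x}/d\to 0$ (equivalently $\abs{z}$ dominating and $\theta\uparrow\pi$), the saddle $i\theta$ coalesces with the pole at $i\pi$, so the nondegenerate steepest-descent estimate is no longer valid. In the region $\abs{x}d\lesssim 1$ I would instead estimate the deformed integral directly near $\tau=i\pi$, extracting the contribution of the order-$n$ pole; its interaction with the Bessel kernel of index $m/2-1$ produces the polynomial factor and the ``$+1$'' in the denominator $1+(\abs{x}d)^{n-1/2}$, and one must check that this estimate matches the stationary-phase regime where $\abs{x}d\asymp 1$ and, as $\abs{x}\to 0$, reproduces the $x=0$ heat kernel. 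For the lower bound, positivity of $p_1$ from (\ref{Rm-integral}) eliminates any concern about cancellation but not the need to bound the error terms in both the Laplace expansion and the near-pole expansion below the main term, uniformly in $(x,z)$; this uniform bookkeeping across the transition region, rather than any single hard computation, is where the real work lies.
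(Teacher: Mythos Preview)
Your outline is broadly correct and close in spirit to the paper's argument, but there are two concrete differences worth noting, one of which is a genuine gap.

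\textbf{Structural difference in the ``nice'' region.} You propose reducing to a one-dimensional Bessel integral from the outset and then running steepest descent on that. The paper instead splits the analysis according to whether $\abs{z}\le B_1\abs{x}^2$ or $\abs{z}\ge B_1\abs{x}^2$. In the first region (saddle $i\theta\hat z$ bounded away from the pole set $\{\sqrt{\lambda^2}=i\pi\}$) the paper runs steepest descent \emph{directly} on the $m$-dimensional integral (\ref{Rm-integral}), shifting the contour in $\C^m$ to pass through $i\theta\hat z$ and controlling the Hessian of the phase there. This avoids having to handle the Bessel kernel at all in that regime and gives the clean estimate $p_1\asymp \abs{x}^{-m}e^{-d^2/4}$. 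Your 1D approach could in principle also work here, but you would have to manage the large-argument asymptotics of $J_{m/2-1}$ uniformly, which is extra bookkeeping the paper sidesteps.

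\textbf{The genuine gap: parity of $m$.} Your plan to ``represent $J_{m/2-1}$ by a Hankel/contour integral'' and extract the pole contribution at $i\pi$ is exactly what the paper does in the region $\abs{z}\ge B_1\abs{x}^2$ --- but only for $m$ \emph{odd}. When $m$ is odd, $H^{(1)}_{m/2-1}$ has the closed form (\ref{hankel-expansion}), a finite sum of terms each carrying the factor $e^{iw}$; this is what makes each term look like the $m=1$ integrand and allows the contour shift past $i\pi$ with a computable residue. When $m$ is even, $J_{m/2-1}$ has integer order and no such finite expansion exists; the Hankel function picks up logarithmic terms and the residue analysis does not go through cleanly. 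The paper handles even $m$ by an entirely separate device (Hadamard descent): one writes $p_1^{(n,m)}(x,z)=\int_\R p_1^{(n,m+1)}(x,(z,z_{m+1}))\,dz_{m+1}$, applies the already-proved odd-$m$ estimate to $p_1^{(n,m+1)}$, and integrates the resulting bound in $z_{m+1}$. Your proposal does not address this parity obstruction, and without it the argument is incomplete for half of all dimensions.

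Finally, your description of the coalescence analysis (``estimate the deformed integral directly near $\tau=i\pi$'') is on target but understates the work: the paper moves the contour to $\Im\rho=3\pi/2$, shows the line integral is negligible, and then analyzes the residue at $i\pi$ via a two-parameter expansion (Propositions \ref{Fprop-II} and \ref{Fprop-III}) that separately treats $\abs{x}\sqrt{\abs{z}}$ small and large --- this is where the $1+(\abs{x}d)^{n-1/2}$ denominator actually emerges, and it requires more than a single pole extraction.
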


\begin{corollary}\label{main-corollary}
  \begin{equation}\label{main-eqn-t}
      p_t(x,z) \asymp t^{-m-n} \frac{1 + (t^{-1/2}d(x,z))^{2n-m-1}}{1+(t^{-1}\abs{x}d(x,z))^{n-\frac{1}{2}}}
    e^{-\frac{1}{4t}d(x,z)^2}
  \end{equation}
  for $(x,z) \in G$, $t > 0$, with the implicit constants independent
  of $t$ as well as $(x,z)$.
\end{corollary}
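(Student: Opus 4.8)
The plan is to derive Corollary \ref{main-corollary} from Theorem \ref{main-theorem} purely by the scaling relation \eqref{p1-dilate}, together with the distance-scaling \eqref{distance-dilate} and the equivalence $d(x,z)^2 \asymp \abs{x}^2 + \abs{z}$ from Corollary \ref{distance-estimate}. First I would substitute into \eqref{p1-dilate}: writing $(x',z') = \varphi_{t^{1/2}}(x,z) = (t^{1/2}x, tz)$, we have $p_t(x,z) = t^{-m-n} p_1(x',z')$, and by \eqref{distance-dilate} one has $d(x',z') = d(\varphi_{t^{1/2}}(x,z)) = t^{1/2} d(x,z)$. Thus $e^{-\frac14 d(x',z')^2} = e^{-\frac{1}{4t} d(x,z)^2}$, which already produces the correct exponential factor, and $\abs{x'} d(x',z') = t^{1/2}\abs{x} \cdot t^{1/2} d(x,z) = t\abs{x}d(x,z)$, which accounts for the $(t\abs{x}d(x,z))^{n-1/2}$ in the denominator. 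The numerator $d(x',z')^{2n-m-1} = (t^{1/2}d(x,z))^{2n-m-1}$ appears, and multiplying by the prefactor $t^{-m-n}$ gives exactly the claimed shape — except that Theorem \ref{main-theorem} is only valid when $d(x',z') \ge d_0$, i.e.\ when $t^{1/2} d(x,z) \ge d_0$.

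The real work, then, is extending the estimate to the complementary region $t^{1/2} d(x,z) < d_0$, and explaining the ``$1 +$'' that appears in the numerator of \eqref{main-eqn-t} (it is absent from \eqref{main-theorem-eqn}, which is why the two formulas differ). The idea is that on the bounded region $\{d(x',z') \le d_0\}$ — equivalently $\{\abs{x'}^2 + \abs{z'} \le C\}$ by Corollary \ref{distance-estimate} — the function $p_1$ is smooth and strictly positive (by Hörmander hypoellipticity and the strong maximum principle / positivity of the heat kernel), hence bounded above and below by positive constants on any compact set not containing too much; more carefully, since $d_0$ is a fixed constant, the closed ball $\{d(x',z') \le d_0\}$ is compact, $p_1$ is continuous and positive there, so $p_1(x',z') \asymp 1$ on this set. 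On this same set, the quantity $\frac{1 + d(x',z')^{2n-m-1}}{1+(\abs{x'}d(x',z'))^{n-1/2}}$ is bounded between two positive constants (numerator and denominator are both continuous, positive, and bounded, with the denominator bounded below by $1$ and the numerator by $1$), and $e^{-\frac14 d(x',z')^2} \asymp 1$. Hence on $\{d(x',z') \le d_0\}$ we have $p_1(x',z') \asymp \frac{1 + d(x',z')^{2n-m-1}}{1+(\abs{x'}d(x',z'))^{n-1/2}} e^{-\frac14 d(x',z')^2}$; note the ``$1+$'' in the numerator is harmless here and was presumably inserted precisely so that one uniform formula covers both the near and far regimes. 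Combining with Theorem \ref{main-theorem} (where for $d(x',z') \ge d_0 \ge 1$, say, one has $1 + d(x',z')^{2n-m-1} \asymp d(x',z')^{2n-m-1}$ if the exponent is nonnegative, and if $2n-m-1 < 0$ one argues $d(x',z')^{2n-m-1} \asymp 1 \asymp 1 + d(x',z')^{2n-m-1}$ on the far region too, since there $d(x',z')^{2n-m-1}$ is bounded), we obtain
\begin{equation*}
  p_1(x',z') \asymp \frac{1 + d(x',z')^{2n-m-1}}{1+(\abs{x'}d(x',z'))^{n-\frac12}} e^{-\frac14 d(x',z')^2}
\end{equation*}
for \emph{all} $(x',z') \in G$, with constants depending only on $n,m,d_0$.

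Finally I would substitute back $x' = t^{1/2}x$, $z' = tz$, $d(x',z') = t^{1/2}d(x,z)$ into this global estimate and multiply by $t^{-m-n}$, which yields precisely \eqref{main-eqn-t}: the numerator becomes $1 + (t^{1/2}d(x,z))^{2n-m-1}$, the denominator becomes $1 + (t\abs{x}d(x,z))^{n-1/2}$, the exponential becomes $e^{-\frac{1}{4t}d(x,z)^2}$, and the overall $t^{-m-n}$ is exactly the prefactor in the statement. Since all the $\asymp$ constants along the way depend only on $n$, $m$, and the fixed threshold $d_0$, the implicit constants in \eqref{main-eqn-t} are indeed independent of $t$ and of $(x,z)$, as claimed. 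The main obstacle is not any of this bookkeeping but rather the compact-region positivity argument: one must be a little careful that $p_1 > 0$ everywhere (which follows since $p_1$ is a heat kernel of a subelliptic operator satisfying Hörmander's condition on a connected group, hence strictly positive by the strong minimum principle, or alternatively directly from \eqref{Rm-integral} by a contour/positivity argument) and that the algebraic factor stays bounded away from $0$ and $\infty$ on the bounded region — both are routine but need to be stated. Everything else is a direct change of variables.
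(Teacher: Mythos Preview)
Your proof is correct and follows essentially the same route as the paper: first extend the $t=1$ estimate from $\{d \ge d_0\}$ to all of $G$ using continuity and strict positivity of $p_1$ on the compact set $\{d \le d_0\}$, then invoke the dilation relations \eqref{p1-dilate} and \eqref{distance-dilate} to pass to general $t$. The paper's proof is terser but the logic is identical.

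One small slip worth flagging: your parenthetical treatment of the case $2n-m-1<0$ is not right---on $\{d \ge d_0\}$ with a negative exponent, $d^{2n-m-1}\to 0$ rather than staying $\asymp 1$, so $d^{2n-m-1}$ and $1+d^{2n-m-1}$ would \emph{not} be comparable there. Fortunately this case never occurs for an actual H-type group: the Hurwitz--Radon constraint $m<\rho(2n)$ together with the elementary bound $\rho(2n)\le 2n$ forces $m\le 2n-1$, hence $2n-m-1\ge 0$, and your argument for the nonnegative-exponent case goes through.
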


\begin{proof}
  Theorem \ref{main-theorem} establishes (\ref{main-eqn-t}) for $t=1$
  and $d(x,z) \ge d_0$.  For $d(x,z) \le d_0$ the estimate follows
  from continuity and the fact that $p_t(x,z) > 0$.  Although the
  positivity of $p_t$ is not obvious from inspection of
  (\ref{Rm-integral}), it is well known.  A proof of this fact could
  be assembled from the fact that the semigroup $e^{tL}$ is positive
  and hence $p_t \ge 0$ (see, for instance, Theorem 5.1 of
  \cite{hunt}) together with a Harnack inequality such as Theorem
  III.2.1 of \cite{purplebook} (which is written about positive
  functions but easily extends to cover those which are nonnegative).

  Once (\ref{main-eqn-t}) holds for all $(x,z)$ and $t=1$,
  (\ref{pt-dilate}) and (\ref{distance-dilate}) show that it holds for
  all $t$, with the same constants.
\end{proof}

We also obtain precise upper and lower estimates on the gradient of
the heat kernel.  Again we work only on $d(x,z) \ge d_0$, and since
$\grad p_t$ vanishes for $x=0$, it is not as clear how to extend to
all of $G$.  However, the upper bound is sufficient to establish
(\ref{gradient-crude-upper}), which is of interest itself.

\begin{theorem}\label{main-gradient-theorem}
  There exists $d_0 > 0$ such that
  \begin{equation}\label{main-gradient-eqn}
  \abs{\grad p_1(x,z)} \asymp \abs{x}
  \frac{d(x,z)^{2n-m+1}}{1+(\abs{x}d(x,z))^{n+\frac{1}{2}}}e^{-\frac{1}{4}d(x,z)^2}
  \end{equation}
  for $d(x,z) \ge d_0$.  In particular, we can combine this with the
  lower bound of Theorem \ref{main-theorem} to see that there exists
  $C > 0$ such that
  \begin{equation}\label{gradient-crude-upper}
    \abs{\grad p_1(x,z)} \le C(1+d(x,z))p_1(x,z).
  \end{equation}
\end{theorem}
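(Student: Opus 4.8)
The plan is to reduce the gradient estimate to estimates for the two scalar integrals $q_1$ and $q_2$, and then to estimate those by the same steepest-descent analysis used for Theorem \ref{main-theorem}. First I would use the decomposition (\ref{grad-q1-q2}): since $p_1$ is radial, (\ref{gradient-radial}) writes $\grad p_1$ as a combination of the \emph{orthonormal} vectors $\hat x$ and $J_{\hat z}\hat x$ (noted just after (\ref{gradient-radial})), so that
\begin{equation*}
  \abs{\grad p_1(x,z)} = \tfrac12 (2\pi)^{-m}(4\pi)^{-n}\,\abs{x}\,\sqrt{q_1(x,z)^2+q_2(x,z)^2} \asymp \abs{x}\bigl(\abs{q_1(x,z)}+\abs{q_2(x,z)}\bigr).
\end{equation*}
Thus Theorem \ref{main-gradient-theorem} is equivalent to the assertion that $\abs{q_1}+\abs{q_2} \asymp d^{2n-m+1}\bigl(1+(\abs{x}d)^{n+\frac12}\bigr)^{-1} e^{-d^2/4}$ for $d=d(x,z)\ge d_0$; note $q_1,q_2$ are real (being the radial derivatives of the real function $p_1$), and there is no extra cancellation in passing from $\sqrt{q_1^2+q_2^2}$ to $\abs{q_1}+\abs{q_2}$.

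Next I would estimate $q_1$ and $q_2$ directly. The integrals (\ref{q1-def}) and (\ref{q2-def}) have exactly the same phase $i\inner{\lambda}{z}-\tfrac14\abs{\lambda}\coth\abs{\lambda}\,\abs{x}^2$ and the same overall structure as the formula (\ref{Rm-integral}) for $p_1$; relative to the integrand of $p_1$, the integrand of $q_1$ carries the extra amplitude factor $\abs{\lambda}\coth\abs{\lambda}$, and that of $q_2$ the extra factor $-i\inner{\lambda}{\hat z}$. So I would re-run the argument used to prove Theorem \ref{main-theorem}: pass to polar coordinates in $\lambda$ (the angular integration turning $e^{i\inner{\lambda}{z}}$ into a Bessel-type factor of $\abs{\lambda}\abs{z}$, and $\inner{\lambda}{\hat z}$ contributing one further factor of that kind), deform the contour of the resulting one-dimensional integral through the relevant saddle, and apply Laplace/steepest-descent asymptotics uniformly in $(x,z)$. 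On the deformed contour near the saddle the new amplitude factors may be replaced, up to bounded multiplicative error, by their values at the saddle, which are computed in terms of the angle $\theta\in[0,\pi)$ solving $\nu(\theta)=4\abs{z}/\abs{x}^2$ as in Theorem \ref{distance}; tracking these values is exactly what turns the estimate of Theorem \ref{main-theorem} into (\ref{main-gradient-eqn}), with the exponents of $d$ and $\abs{x}d$ shifted by the contribution of the extra factors.

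The main obstacle, I expect, is precisely the uniformity of this steepest-descent analysis across all $(x,z)$ — in particular matching the regime where $\theta$ stays bounded away from $\pi$ (equivalently $\abs{x}d$ large) with the regime $\theta\uparrow\pi$ (the $x$ small, resp. $x=0$, case, in which the saddle approaches the pole of $\coth$ at $\abs{\lambda}=i\pi$, the same pole responsible for the $\sqrt{4\pi\abs{z}}$ in (\ref{distance-formula})). Controlling that pole carefully in the $q_1$ integral, and the extra Bessel factor in the $q_2$ integral, is where essentially all the work sits; but it is the same work already done for $p_1$ in Theorem \ref{main-theorem}, now with a modified amplitude, so no genuinely new device should be needed.

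Finally, for the crude bound (\ref{gradient-crude-upper}): for $d\ge d_0$ I would divide the upper bound in (\ref{main-gradient-eqn}) by the lower bound in (\ref{main-theorem-eqn}) to get
\begin{equation*}
  \frac{\abs{\grad p_1(x,z)}}{p_1(x,z)} \le C\,\abs{x}\,d^2\,\frac{1+(\abs{x}d)^{n-\frac12}}{1+(\abs{x}d)^{n+\frac12}},
\end{equation*}
and then use the elementary inequality $\frac{1+a^{n-1/2}}{1+a^{n+1/2}}\le 2\min(1,a^{-1})$ for $a=\abs{x}d\ge 0$, so that the right-hand side is at most $2C\,d^2\min(\abs{x},d^{-1})\le 2C\,d$, giving $\abs{\grad p_1}\le 2C\,d\,p_1\le C'(1+d)p_1$ on that range. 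For $d\le d_0$ the ratio $\abs{\grad p_1}/\bigl((1+d)p_1\bigr)$ is continuous — using that $p_1$ is smooth and strictly positive — on the closed ball $\{d\le d_0\}$, which is compact, hence bounded there; combining the two ranges yields (\ref{gradient-crude-upper}).
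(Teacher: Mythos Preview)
Your proposal is correct and matches the paper's approach: the paper likewise reduces $\abs{\grad p_1}$ to $\abs{q_1}+\abs{q_2}$ via (\ref{grad-q1-q2}) and the orthogonality of $\hat x$ and $J_{\hat z}\hat x$, then reruns the proof of Theorem~\ref{main-theorem} with the modified amplitudes $a_1,a_2$ in both the steepest-descent region (Theorem~\ref{region-I-theorem}, where the lower bound on $\max(\abs{q_1},\abs{q_2})$ uses that at least one of $a_1(i\theta\hat z),a_2(i\theta\hat z)$ is bounded away from zero, since $a_1$ vanishes at $\theta=\pi/2$) and the polar-coordinate/residue region (Theorem~\ref{gradient-thm-regionII}), with Hadamard descent for even $m$. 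Your derivation of (\ref{gradient-crude-upper}), including the compactness argument for $d\le d_0$, fills in details the paper leaves implicit.
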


The function $q_2$ is of interest in its own right, because it gives
the norm of the ``vertical gradient'' of $p_1$: $\abs{q_2} =
\abs{(Z_1 p_1, \dots, Z_m p_1)}$.  The proof of Theorem
\ref{main-gradient-theorem} includes estimates on $q_2$; we record
here the upper bound.
\begin{theorem}\label{vertical-gradient-theorem}
  There exists $d_0 \ge 0$ and a constant $C > 0$ such that
  \begin{equation}
    \abs{(Z_1 p_1, \dots, Z_m p_1)(x,z)} = \abs{q_2(x,z)} \le C
    \frac{d(x,z)^{2n-m-1}}{1+(\abs{x}d(x,z))^{n-\frac{1}{2}}} e^{-\frac{1}{4}d(x,z)^2}.
  \end{equation}
  whenever $d(x,z) \ge d_0$.  In particular, for all $(x,z) \in G$ we have
  \begin{equation}\label{vertical-gradient-crude}
    \abs{(Z_1 p_1, \dots, Z_m p_1)(x,z)} \le C p(x,z).
  \end{equation}
  \end{theorem}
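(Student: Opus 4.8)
The plan is to analyze the oscillatory integral (\ref{q2-def}) for $q_2$ by the method of steepest descent, exactly in parallel with the analysis of $p_1$ underlying Theorem~\ref{main-theorem}, and then to read off (\ref{vertical-gradient-crude}) by comparison with $p_1$. Since the integrand in (\ref{q2-def}) depends on the direction $\hat z$ only through the linear factor $\inner{\lambda}{\hat z}$ and is otherwise a function of $r := \abs{\lambda}$, the first step is to pass to polar coordinates $\lambda = r\omega$ on $\R^m$ and carry out the angular integration. Using the standard identity $\int_{S^{m-1}} e^{is\inner{\omega}{e}}\,d\omega = (2\pi)^{m/2} s^{-(m-2)/2}J_{(m-2)/2}(s)$ and its $\partial_s$-derivative, one finds
\begin{equation*}
  q_2(x,z) = c_m \abs{z}^{-(m-2)/2}\int_0^\infty e^{-\frac14 r\coth r\,\abs{x}^2}\left(\frac{r}{\sinh r}\right)^{n} r^{(m+2)/2}\, J_{m/2}(r\abs{z})\,dr ,
\end{equation*}
which is the one-dimensional reduction of $p_1$ with the Bessel index raised by one and an extra power of $r$ inserted. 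Substituting the large-argument asymptotics $J_{m/2}(s)\sim\sqrt{2/(\pi s)}\cos\bigl(s-\tfrac{(m+1)\pi}{4}\bigr)$ turns this into a sum of two integrals carrying oscillatory factors $e^{\pm i r\abs{z}}$.

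Next comes the contour deformation. Taking the $+$ branch, the phase is $\Phi(r) = i r\abs{z} - \tfrac14\abs{x}^2 r\coth r$, and a computation shows $\Phi'(i\alpha) = i\bigl(\abs{z} - \tfrac14\abs{x}^2\nu(\alpha)\bigr)$ with $\nu$ as in (\ref{nu-def}); hence the relevant saddle sits at $r = i\theta$, where $\theta\in[0,\pi)$ is precisely the geodesic parameter of Theorem~\ref{distance}, and there $\Re\Phi(i\theta) = -\tfrac14 d(x,z)^2$. Shifting the $r$-contour up to pass through $i\theta$ (staying below the first pole $r=i\pi$ of $\coth$ and $1/\sinh$, which is why the argument requires $\theta<\pi$) and evaluating the resulting Gaussian integral produces a leading term of size $e^{-d^2/4}$ multiplied by the factors collected at the saddle: the value of $(r/\sinh r)^n$, the Bessel factor $(r\abs{z})^{-1/2}$, the algebraic factor $r^{(m+2)/2}$ together with $\abs{z}^{-(m-2)/2}$, and the saddle width $\abs{\Phi''(i\theta)}^{-1/2}$. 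Rewriting the product using $d = \abs{x}\theta/\sin\theta$ and $\nu(\theta) = 4\abs{z}/\abs{x}^2$ (and adding the companion contribution when $\abs{z}$ is small, i.e. $\theta$ near $0$, where $d\asymp\abs{x}$) yields the bound $d^{2n-m-1}\bigl(1+(\abs{x}d)^{n-1/2}\bigr)^{-1}e^{-d^2/4}$; the one extra power of $r$ coming from $\inner{\lambda}{\hat z}$ contributes only a bounded factor $r\asymp\theta\le\pi$, so it does not change the shape of the estimate. This boundedness is the integral-side manifestation of the identity $\partial d(x,z)^2/\partial\abs{z} = 4\theta$ — obtained by differentiating $d^2 = \abs{x}^2\theta^2/\sin^2\theta$ along $\nu(\theta)=4\abs{z}/\abs{x}^2$ and using $\nu'(\theta) = 2(\sin\theta-\theta\cos\theta)/\sin^3\theta$ — which is exactly what one should expect since $q_2 = \partial p_1/\partial\abs{z}$.

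Finally, to get (\ref{vertical-gradient-crude}): the upper bound just derived and the lower bound of Theorem~\ref{main-theorem} give $\abs{q_2(x,z)}\le C\,p_1(x,z)$ for $d(x,z)\ge d_0$, while on the compact set $\{d\le d_0\}$ the smooth function $q_2$ and the continuous strictly positive function $p_1$ have bounded ratio; this proves $\abs{q_2}\le C\,p_1$ on all of $G$.

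The step I expect to be the main obstacle is establishing all of the above \emph{uniformly} in $(x,z)$: one must control the transition between the regime where $r\abs{z}$ is large (so the Bessel asymptotics apply and the saddle $i\theta$ dominates) and the regime where it is not, the degeneration as $\abs{x}\to 0$ — equivalently $\theta\uparrow\pi$ — in which the saddle approaches the pole at $i\pi$, $\Phi''$ blows up, and the denominator $1+(\abs{x}d)^{n-1/2}$ changes character, and the bookkeeping of the resulting powers so that they assemble into exactly $d^{2n-m-1}/(1+(\abs{x}d)^{n-1/2})$. These are the same difficulties met in the proof of Theorem~\ref{main-gradient-theorem}, of which the present estimate is a byproduct.
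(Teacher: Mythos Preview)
Your outline captures the right heuristic --- the saddle sits at $r=i\theta$, the value of the phase there gives $-d^2/4$, and the extra factor $\inner{\lambda}{\hat z}$ contributes only a bounded multiple at the saddle --- and your derivation of (\ref{vertical-gradient-crude}) from the precise bound is exactly right.  But the execution you propose has two genuine gaps.

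First, the step ``substitute the large-argument asymptotics $J_{m/2}(s)\sim\sqrt{2/(\pi s)}\cos(\cdots)$ and then shift the contour'' is not legitimate as stated: the asymptotic is not analytic, so you cannot deform its contour.  What you actually need is to split $J_{m/2}=\tfrac12(H^{(1)}_{m/2}+H^{(2)}_{m/2})$ and use that the Hankel functions decay in the upper/lower half-planes respectively.  For $m$ odd this works cleanly because $H^{(1)}_{m/2}$ has the finite expansion (\ref{hankel-expansion}), which is what the paper exploits.  For $m$ even the Hankel functions have logarithmic branch points at the origin and the integrand is not even in $r$, so the reduction to a two-sided integral on $\R$ fails; the paper handles even $m$ by an entirely separate Hadamard descent (Section~\ref{hadamard-sec}), integrating the $(m{+}1)$-dimensional estimate over the extra variable.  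Your sketch does not distinguish the parities and would stall for even $m$.

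Second, you correctly name the hardest regime --- $\theta\uparrow\pi$, the saddle colliding with the pole at $i\pi$ --- but do not say how to handle it.  In the paper this is not done by a uniform saddle estimate at all: one pushes the contour \emph{past} the pole to $\Im r=3\pi/2$, isolates the residue at $i\pi$, and then performs a separate analysis of that residue (Theorem~\ref{h-estimate}, Propositions~\ref{Fprop-II}--\ref{Fprop-III}) to extract the factor $1/(1+(\abs{x}d)^{n-1/2})$.  The paper also uses a different method in the complementary region $\abs{z}\le B_1\abs{x}^2$: there it does \emph{not} pass to polar coordinates but shifts the full $\R^m$-contour to $\R^m+i\theta\hat z$ (Section~\ref{steepest-descent-sec}).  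So the paper's proof is a two-region, two-technique argument with a parity-dependent third step, rather than the single unified contour shift you describe.
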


\begin{remark*}
  Since our estimate is based on analysis of the formula
  (\ref{Rm-integral}), we will henceforth treat (\ref{Rm-integral}) as
  the definition of a function $p_1$ on $\R^{2n+m}$.  In particular,
  it makes sense for all $n,m$, whether or not an H-type group of the
  corresponding dimension actually exists (which can be ascertained
  via Theorem \ref{dimension-classification}).  The proofs of Theorems
  \ref{main-theorem} and \ref{main-gradient-theorem} do not depend on
  the values of $n$ and $m$, so they likewise remain valid for all
  $n,m$.  The estimates given are in terms of the distance function
  $d$, which likewise should be taken as a function defined by the
  formula (\ref{distance-formula}).  Indeed, the only place where we need
  $p_1$ to be a heat kernel is in the proof of Corollary
  \ref{main-corollary}, where we use the positivity of $p_1$ which
  follows from the general theory.

  In particular, in Section \ref{hadamard-sec} we shall make use of
  estimates on $p_1$ for values of $n,m$ not necessarily corresponding
  to H-type groups.
\end{remark*}

The proofs of these two theorems are broken into two cases, depending
on the relative sizes of $\abs{x}$ and $\abs{z}$.
Section \ref{steepest-descent-sec} deals with the case when $\abs{z}
\lesssim \abs{x}^2$; here we apply a steepest descent type argument to
approximate the desired function by a Gaussian.  Section
\ref{polar-sec} handles the case $\abs{z} \gg \abs{x}^2$ by a
transformation to polar coordinates and a residue computation which
only works for odd $m$.  The result for $m$ even can be deduced from
that for $m$ odd by a Hadamard descent approach, which is contained in
Section \ref{hadamard-sec}.

\section{Steepest descent}\label{steepest-descent-sec}

We first handle the region where $\abs{z} \le B_1 \abs{x}^2$ for some
constant $B_1$.  If $\theta = \theta(x,z)$ is as in Theorem
\ref{distance}, this implies $\nu(\theta) \le 4 B_1$; since $\nu$
increases on $[0,\pi)$ \ignorethis{(]} we have $0 \le \theta \le
\theta_0$ in this region.  Note also that by Corollary
\ref{distance-estimate} we have $d(x,z)^2 \le D_2(1+B_1)\abs{x}^2$, as
well as $d(x,z)^2 \ge \abs{x}^2$ which is clear from
(\ref{distance-formula}).  Thus for this region the bounds of Theorems
\ref{main-theorem}, \ref{main-gradient-theorem} and
\ref{vertical-gradient-theorem} are implied by the following:
\begin{theorem}\label{region-I-theorem}
  For each constant $B_1 > 0$ there exists $d_0 > 0$ such that
  \begin{align}
    p_1(x,z) &\asymp \frac{1}{\abs{x}^m} e^{-\frac{1}{4} d(x,z)^2} \label{I-p-both} \\
    \abs{q_i(x,z)} &\le  \frac{C_2}{\abs{x}^{m}} e^{-\frac{1}{4}
      d(x,z)^2}, \quad i=1,2 \label{I-q-upper} \\
    \frac{C_1}{\abs{x}^{m}} e^{-\frac{1}{4} d(x,z)^2} &\le
    \max\{\abs{q_1(x,z)}, \abs{q_2(x,z)}\} \label{I-q-lower}
  \end{align}
  for all $x,z$ with $d(x,z) \ge d_0$ and $\abs{z} \le B_1 \abs{x}^2$.
\end{theorem}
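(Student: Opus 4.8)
The plan is to estimate \eqref{Rm-integral} by the method of steepest descent, with $\abs{x}^2$ playing the role of the large parameter. Since $p_1$, and by the remark following \eqref{q2-def} also $q_1$ and $q_2$, depend only on $\abs{x}$ and $\abs{z}$, we may fix the direction $\hat z$ of $z$ and write the integrand of \eqref{Rm-integral} as $(2\pi)^{-m}(4\pi)^{-n}e^{-\abs{x}^2\Phi(\lambda)}A(\lambda)$, where $\Phi(\lambda):=\tfrac14\abs{\lambda}\coth\abs{\lambda}-i\inner{\lambda}{z}/\abs{x}^2$ and $A(\lambda):=(\abs{\lambda}/\sinh\abs{\lambda})^n$; both are holomorphic functions of $\abs{\lambda}^2=\sum_j\lambda_j^2$, with singularities only where $\sinh$ vanishes, i.e.\ where $\abs{\lambda}^2\in\{-k^2\pi^2:k\ge 1\}$. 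The equation $\grad_\lambda\Phi=0$ forces $\lambda$ to be a multiple of $\hat z$, and a short computation using $f'(-\theta^2)=\nu(\theta)/(2\theta)$ for $f(s):=\sqrt s\coth\sqrt s$ shows the relevant critical point is $\lambda_c:=i\theta\hat z$, where $\theta=\theta(x,z)\in[0,\pi)$ is precisely the quantity of Theorem \ref{distance}. Using \eqref{nu-def} and \eqref{distance-formula} one then verifies $\abs{x}^2\Phi(\lambda_c)=\tfrac14 d(x,z)^2$, so the factor $e^{-d^2/4}$ appears automatically. The hypothesis $\abs{z}\le B_1\abs{x}^2$ enters here: it gives $\nu(\theta)\le 4B_1$, hence $\theta\le\theta_0$ for some $\theta_0<\pi$ depending only on $B_1$, keeping $\lambda_c$ and the contour below away from the singularities.

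Next I would deform the contour of integration from $\R^m$ to $\R^m+i\theta\hat z$. Along the homotopy $\R^m+is\hat z$, $0\le s\le\theta_0$, writing $\mu=\Re\lambda$ we have $\abs{\lambda}^2=\abs{\mu}^2-s^2+2is\inner{\mu}{\hat z}$, which equals some $-k^2\pi^2$ only if $\inner{\mu}{\hat z}=0$, forcing $\abs{\lambda}^2\ge-\theta_0^2>-\pi^2$; so the integrand remains holomorphic, and it decays exponentially as $\abs{\mu}\to\infty$ uniformly in $s$ (because $\Re[\abs{\lambda}\coth\abs{\lambda}]\gtrsim\abs{\mu}$ and $(\abs{\lambda}/\sinh\abs{\lambda})^n$ decays), so the shift is justified by Cauchy's theorem and a limiting argument. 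On the shifted contour, write $\lambda=\mu+i\theta\hat z$ with $\mu\in\R^m$; the linear term of $\Phi$ about $\lambda_c$ vanishes since $\grad_\lambda\Phi(\lambda_c)=0$, and the Hessian $H:=\tfrac14\,\mathrm{Hess}_\lambda[\abs{\lambda}\coth\abs{\lambda}](\lambda_c)$ turns out to be real, symmetric, and diagonal relative to the splitting $\R\hat z\oplus\hat z^\perp$, with eigenvalues $\tfrac14\nu'(\theta)$ (in the $\hat z$ direction) and $\nu(\theta)/(4\theta)$ (on $\hat z^\perp$), read off at $\theta=0$ by continuity. By Lemma \ref{nu-increase} and Corollary \ref{nu-c} these are bounded below by a positive constant, and by continuity they are bounded above on $[0,\theta_0]$; likewise $A(\lambda_c)=(\theta/\sin\theta)^n$ is positive and bounded above and below there.

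The estimate \eqref{I-p-both} now follows from a standard Laplace-type argument, made uniform by the compactness of the range of $\theta$. For the upper bound, one bounds the deformed integral in absolute value by $e^{-d^2/4}\int_{\R^m}e^{-\abs{x}^2(\Re\Phi(\mu+i\theta\hat z)-\Phi(\lambda_c))}\,\abs{A(\mu+i\theta\hat z)}\,d\mu$ and uses that, uniformly in $\theta\in[0,\theta_0]$, $\Re\Phi(\mu+i\theta\hat z)-\Phi(\lambda_c)\gtrsim\abs{\mu}^2$ for $\abs{\mu}$ small and $\gtrsim 1$ for $\abs{\mu}$ bounded away from $0$, together with the decay of $\abs{A}$: the small-$\mu$ part contributes $\asymp\abs{x}^{-m}$ and the rest is $O(e^{-c\abs{x}^2})$. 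For the lower bound, one isolates the contribution of a small fixed ball about $\mu=0$: by a second-order Taylor expansion it equals the positive Gaussian integral $(2\pi)^{-m/2}(4\pi)^{-n}(\theta/\sin\theta)^n(\det H)^{-1/2}\abs{x}^{-m}e^{-d^2/4}$ up to relative error $O(\abs{x}^{-2})$, and this leading term is $\asymp\abs{x}^{-m}e^{-d^2/4}$ with a constant depending only on $n,m,B_1$; it dominates both that error and the $O(e^{-c\abs{x}^2})$ tail once $\abs{x}$ is large, which by Corollary \ref{distance-estimate} is implied by $d(x,z)\ge d_0$ for $d_0=d_0(B_1)$ large. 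The bounds \eqref{I-q-upper} for $q_1,q_2$ are obtained identically from \eqref{q1-def} and \eqref{q2-def}: the only change is an extra amplitude factor, equal at $\lambda_c$ to $\abs{\lambda_c}\coth\abs{\lambda_c}=\theta\cot\theta$ for $q_1$ and to $-i\inner{\lambda_c}{\hat z}=\theta$ for $q_2$, both bounded on $[0,\theta_0]$. Finally, for \eqref{I-q-lower}: on $[0,\pi)$ the function $\theta\cot\theta$ vanishes only at $\theta=\pi/2$, where $\theta\ne 0$, so $\max(\abs{\theta\cot\theta},\theta)$ is bounded below by a positive constant on $[0,\theta_0]$; hence for each $(x,z)$ at least one of the leading coefficients of $q_1,q_2$ is bounded away from $0$, and the corresponding $\abs{q_i}$ is $\gtrsim\abs{x}^{-m}e^{-d^2/4}$ for $d(x,z)\ge d_0$.

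The main obstacle is packaging all of this uniformly: one must check that the remainder estimates, the quantitative lower bound on $\Re\Phi(\mu+i\theta\hat z)-\Phi(\lambda_c)$, and the bounds on $(\abs{\lambda}/\sinh\abs{\lambda})^n$ along the shifted contour hold with constants depending only on $B_1$ (and $n,m$) as $\theta$ ranges over $[0,\theta_0]$ and $\abs{x}\to\infty$. This is exactly where $\theta_0<\pi$ is used---it keeps the critical point and the contour a definite distance from the poles of $\coth$---and the rest is a careful but routine exercise in complex analysis and Laplace asymptotics.
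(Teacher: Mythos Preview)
Your proposal is correct and follows essentially the same approach as the paper: identify the critical point $\lambda_c=i\theta\hat z$, shift the contour to $\R^m+i\theta\hat z$ (using $\theta\le\theta_0<\pi$ to stay away from the poles), compute the Hessian eigenvalues $\nu'(\theta)$ and $\nu(\theta)/\theta$ and bound them via Lemma~\ref{nu-increase} and Corollary~\ref{nu-c}, and then run a uniform Laplace argument; for \eqref{I-q-lower} the paper likewise splits according to whether the $q_1$ or $q_2$ amplitude at $\lambda_c$ is bounded away from zero (it uses the threshold $\theta=\pi/4$ rather than $\pi/2$, but the idea is identical). The only place the paper is more explicit is in proving the global lower bound $\Re\psi(\mu+i\theta\hat z,\theta,\hat z)>0$ for $\mu\ne 0$ (Lemma~\ref{zcothz}), which you subsume under ``$\gtrsim 1$ for $\abs{\mu}$ bounded away from $0$''; that step is not entirely routine and is worth isolating.
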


Our approach here will be a steepest descent argument.  Very
informally, the motivation is as follows: given a function $F(x) =
\int_{\R} e^{-x^2 f(\lambda)} a(\lambda)\,d\lambda$, move the contour
of integration to a new contour $\Gamma$ which passes through a
critical point $\lambda_c$ of $f$, so that $f(\lambda) \approx
f(\lambda_c) + \frac{1}{2} f''(\lambda_c)(\lambda-\lambda_c)^2$.  Then
we have
\begin{equation*}
F(x) \approx
e^{-x^2 f(\lambda_c)} \int_{\Gamma} e^{-x^2 f''(\lambda_c)
  (\lambda-\lambda_c)^2/2} a(\lambda)\,d\lambda.
\end{equation*}
For large $x$ the integrand looks like a Gaussian concentrated near
$\lambda_c$, so $F(x) \asymp e^{-x^2 f(\lambda_c)}
\frac{a(\lambda_c)}{x \sqrt{f''(\lambda_c)}}$.  Our proof essentially
follows this line, in $\R^m$ instead of $\R$, but more care is
required to establish the desired uniformity.

Our first task is to extend the integrand to a meromorphic function on
$\C^m$, so that we may justify moving the contour of integration.

Let $\cdot$ denote the bilinear (not sesquilinear) dot product on
$\C^m$, and for $\lambda \in \C^m$ write $\lambda^2 := \lambda \cdot
\lambda$; this defines an analytic function from $\C^m$ to $\C$, and
$\lambda^2 = \abs{\lambda}^2$ iff $\lambda \in \R^m$.  For $w \in \C$,
let $\sqrt{w}$ denote the branch of the square root function
satisfying $\Im \sqrt{w} \ge 0$ and $\sqrt{w} > 0$ for $w > 0$ (so the
branch cut is the positive real axis).  Thus if $g : \C \to \C$ is an
analytic even function, $\lambda \mapsto g(\sqrt{\lambda^2})$ is
analytic as well, and satisfies $g(\sqrt{\lambda^2}) =
g(\abs{\lambda})$ for $\lambda \in \R^m$.  This holds in particular
for the function $\frac{\sinh w}{w}$, and thus the functions
$\frac{\sqrt{\lambda^2}}{\sinh \sqrt{\lambda^2}}$ and $\sqrt{\lambda^2} \coth
\sqrt{\lambda^2}$ are analytic away from points with $\sqrt{\lambda^2}
= ik\pi$, $k = 1, 2, \dots$.  

Using this notation, we let
\begin{align*}
  a_0(\lambda) &:= \left(\frac{\sqrt{\lambda^2}}{\sinh \sqrt{\lambda^2}}\right)^n \\
  a_1(\lambda) &:=  \cosh \sqrt{\lambda^2} \left(\frac{\sqrt{\lambda^2}}{\sinh \sqrt{\lambda^2}}\right)^{n+1} \\
  a_2(\lambda) &:= -i \left(\frac{\sqrt{\lambda^2}}{\sinh
    \sqrt{\lambda^2}}\right)^n \lambda \cdot \hat{z} \in \C^{2n}.
\end{align*}
As mentioned previously, $\hat{z}$ may be any unit vector in $\R^m$
without affecting the computation.  Therefore we shall treat it as
fixed, while $\abs{z}$ is allowed to vary.

Also, for $\lambda \in \C^m, \theta \in [0,\theta_0], \hat{z} \in
S^{m-1} \subset \R^m$, we define
\begin{equation}\label{fdef}
  f(\lambda, \theta, \hat{z}) := -{i \nu(\theta)} \lambda \cdot
  \hat{z} + \sqrt{\lambda^2} \coth \sqrt{\lambda^2}
\end{equation}
so that
\begin{equation*}
  \frac{\abs{x}^2}{4} f(\lambda, \theta(x,z), \frac{z}{\abs{z}}) =
  -i \lambda \cdot z + \frac{1}{4} \sqrt{\lambda^2} \coth
  \sqrt{\lambda^2} \abs{x}^2.
\end{equation*}
We henceforth write $\theta$ for $\theta(x,z)$.  Thus we now have
\begin{align}
  p_1(x,z) &= (4 \pi)^{-m-n} \int_{\R^m}
  e^{-\frac{\abs{x}^2}{4} f(\lambda, \theta, \hat{z})}
  a_0(\lambda)\,d\lambda \\
  q_i(x,z) &= (4 \pi)^{-m-n} \int_{\R^m}
  e^{-\frac{\abs{x}^2}{4} f(\lambda, \theta, \hat{z})}
  a_i(\lambda)\,d\lambda , && i=1,2
\end{align}
Written thus, the integrands have obvious meromorphic extensions to
$\lambda \in \C^n$, analytic away from the set $\{ \sqrt{\lambda^2}  =
ik\pi,\,k=1,2,\dots\}$.

A simple calculation verifies that $\frac{d}{dw} w \coth w = i
\nu(-iw)$, so we can compute the gradient of $f$ with respect to
$\lambda$ as
\begin{equation}
  \grad_\lambda f(\lambda, \theta, \hat{z}) = -i \nu(\theta) \hat{z} +
  i \nu(-i \sqrt{\lambda^2}) \hat{\lambda}
\end{equation}
which vanishes when $\lambda = i \theta \hat{z}$.  Thus
$i \theta \hat{z}$ is the desired critical point.  We observe that
\begin{equation}
  f(i \theta \hat{z}, \theta, \hat{z}) = \theta \nu(\theta) + i \theta
  \coth(i \theta) = \theta(\nu(\theta) + \cot(\theta)) = \frac{\theta^2}{\sin^2\theta}
\end{equation}
so by (\ref{distance-formula}),
\begin{equation}
  \abs{x}^2 f(i \theta \hat{z}, \theta, \hat{z}) = d(x,z)^2.
\end{equation}
Thus we define
\begin{equation}\label{psidef}
  \psi(\lambda, \theta, \hat{z}) := f(\lambda, \theta, \hat{z}) - f(i
  \theta \hat{z}, \theta, \hat{z}) = -i\nu(\theta)\lambda\cdot\hat{z}
  + \sqrt{\lambda^2}\coth\sqrt{\lambda^2} - \frac{\theta^2}{\sin^2\theta}.
\end{equation}
We then have
\begin{equation}
  p_1(x,z) = (4 \pi)^{-m-n}e^{-d(x,z)^2/4} \int_{\R^m}
  e^{-\frac{\abs{x}^2}{4} \psi(\lambda, \theta, \hat{z})} a_0(\lambda)\,d\lambda
\end{equation}
and analogous formulas for $q_1, q_2$.  Thus let
\begin{equation}\label{hi-def}
  h_i(x,z) := \int_{\R^m}
  e^{-\frac{\abs{x}^2}{4} \psi(\lambda, \theta, \hat{z})} a_i(\lambda)\,d\lambda.
\end{equation}
It will now suffice to estimate $h_i$.


The first step in the steepest descent method is to move the
``contour'' of integration to pass through $i \theta \hat{z}$.  Some
preliminary computations are in order.

\begin{lemma}\label{sab}
  For $a, b \in \R^m$, we have
  \begin{equation}
\abs{a} - \abs{b} \le \abs{\Re \sqrt{(a+bi)^2}} \le \abs{a}, \quad 0 \le \Im
\sqrt{(a+bi)^2} \le \abs{b}.
  \end{equation}
  Equality holds in the upper bounds if and only if $a$ and $b$ are
  parallel, i.e. $a=rb$ for some $r \in \R$.
\end{lemma}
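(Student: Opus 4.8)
The plan is to reduce the statement to elementary algebra with a single complex number. Write $\lambda := a + bi \in \C^m$, so that $\lambda^2 = \lambda\cdot\lambda = \abs{a}^2 - \abs{b}^2 + 2i\,(a\cdot b) \in \C$, and set $\sqrt{\lambda^2} = p + qi$ with $p,q \in \R$. The choice of branch forces $q = \Im\sqrt{\lambda^2}\ge 0$ — which is already the stated lower bound $0 \le \Im\sqrt{(a+bi)^2}$ — while $\abs{p} = \abs{\Re\sqrt{\lambda^2}}$. Squaring the identity $(p+qi)^2 = \lambda^2$ and separating real and imaginary parts gives the two relations
\begin{equation*}
  p^2 - q^2 = \abs{a}^2 - \abs{b}^2, \qquad pq = a\cdot b,
\end{equation*}
which hold without exception (the branch choice is only needed to pin down the sign of $q$).

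Next I would invoke Cauchy--Schwarz in the form $(pq)^2 = (a\cdot b)^2 \le \abs{a}^2\abs{b}^2$. Writing $P := p^2 \ge 0$ and $Q := q^2 \ge 0$, the first relation reads $P = Q + \abs{a}^2 - \abs{b}^2$, and substituting this into $PQ \le \abs{a}^2\abs{b}^2$ and rearranging yields the factored inequality $(Q - \abs{b}^2)(Q + \abs{a}^2) \le 0$. Since $Q + \abs{a}^2 \ge 0$, this forces $Q \le \abs{b}^2$, i.e. $\Im\sqrt{(a+bi)^2} \le \abs{b}$; and then $P = Q + \abs{a}^2 - \abs{b}^2 \le \abs{a}^2$, i.e. $\abs{\Re\sqrt{(a+bi)^2}} \le \abs{a}$. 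For the remaining lower bound, if $\abs{a} \le \abs{b}$ it is immediate from $\abs{p} \ge 0$; if $\abs{a} > \abs{b}$, then $P \ge \abs{a}^2 - \abs{b}^2 \ge (\abs{a}-\abs{b})^2$, the last step because $\abs{a}^2 - \abs{b}^2 - (\abs{a}-\abs{b})^2 = 2\abs{b}(\abs{a}-\abs{b}) \ge 0$.

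For the equality characterization, note that the computation above in fact gives the exact identity $(Q-\abs{b}^2)(Q+\abs{a}^2) = (a\cdot b)^2 - \abs{a}^2\abs{b}^2$, so $Q = \abs{b}^2$ precisely when $(a\cdot b)^2 = \abs{a}^2\abs{b}^2$ (the alternative factor $Q + \abs{a}^2 = 0$ forces $a=b=0$, which is subsumed). Since $P = \abs{a}^2 \Leftrightarrow Q = \abs{b}^2$ is a pure identity, equality in either upper bound is equivalent to equality in Cauchy--Schwarz, hence to $a$ and $b$ being linearly dependent (the stated condition $a = rb$, read symmetrically so as to allow $b = 0$). I do not expect a genuine obstacle here; the only care needed is a quick check of the degenerate configurations ($a\cdot b = 0$, so that $\lambda^2$ lies on a branch cut or the negative real axis, or $a = 0$, or $b = 0$) to confirm that the two displayed relations and the equality analysis survive intact.
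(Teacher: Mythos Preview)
Your proof is correct and follows essentially the same route as the paper's: both compute $(a+bi)^2 = \abs{a}^2 - \abs{b}^2 + 2i(a\cdot b)$, apply Cauchy--Schwarz to $(a\cdot b)^2 \le \abs{a}^2\abs{b}^2$, and read off the bounds on the real and imaginary parts of the square root. Your factorization $(Q-\abs{b}^2)(Q+\abs{a}^2) = (a\cdot b)^2 - \abs{a}^2\abs{b}^2$ is a slightly tidier way to obtain both upper bounds and the equality case in one stroke, whereas the paper uses the identity $(\Re\sqrt{w})^2 = \tfrac{1}{2}(\Re w + \abs{w})$ and treats the imaginary-part bound as ``similar''.
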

\begin{proof}
  First note that $(a+bi)^2 = \abs{a}^2 - \abs{b}^2 + 2i a \cdot b$.
  So by the Cauchy-Schwarz inequality,
  \begin{equation}\label{ab2-upper}
    \begin{split}
    \abs{(a+bi)^2}^2 &= (\abs{a}^2 - \abs{b}^2)^2 + (2 a \cdot b)^2
    \\
    &\le (\abs{a}^2 - \abs{b}^2)^2 + 4 \abs{a}^2 \abs{b}^2 \\
    &= (\abs{a}^2+\abs{b}^2)^2
    \end{split}
\end{equation}
  so that $\abs{(a+bi)^2} \le \abs{a}^2+\abs{b}^2$.  Equality holds in
  the Cauchy-Schwartz inequality iff $a$ and $b$ are parallel.  On the
  other hand,
  \begin{equation}\label{ab2-lower}
  \abs{(a+bi)^2} \ge \Re (a+bi)^2 = \abs{a}^2 - \abs{b}^2.    
  \end{equation}
  Now we can write
  \begin{align*}
    \left(\Re \sqrt{(a+bi)^2}\right)^2 &= \frac{1}{4} \left(\sqrt{(a+bi)^2} +
    \conj{\sqrt{(a+bi)^2}}\right)^2 \\
    &= \frac{1}{4} \left((a+bi)^2 + \conj{(a+bi)^2} + 2
    \abs{\sqrt{(a+bi)^2}}^2\right) \\
    &= \frac{1}{2} (\abs{a}^2 - \abs{b}^2 + \abs{(a+bi)^2}).
  \end{align*}
  The upper bound for $\abs{\Re \sqrt{(a+bi)^2}}$ then follows
  from (\ref{ab2-upper}).  The lower bound is trivial if $\abs{a} \le
  \abs{b}$, and otherwise we have by (\ref{ab2-lower}) that
  \begin{equation*}
    \left(\Re \sqrt{(a+bi)^2}\right)^2 \ge \abs{a}^2 - \abs{b}^2 \ge
    (\abs{a} - \abs{b})^2.
  \end{equation*}
  The lower bound for $\Im \sqrt{(a+bi)^2}$ holds by our definition of
  $\sqrt{\cdot}$, and the upper bound is similar to the previous one.
\end{proof}

\begin{lemma}\label{integrand-bounds}
  For each $\theta_0 \in [0,\pi)$ there exists $c(\theta_0) > 0$ such that if $a,b \in \R^n$ with
  $\abs{a} \ge c(\theta_0)$, $\abs{b} \le 2\pi$, we have
  \begin{align}
    \Re \psi(a+ib, \theta, \hat{z}) &\ge \abs{a}/2 \\
    \intertext{and}
    \abs{a_i(a+ib)} &\le 1
  \end{align}
  for all $\theta \in [0,\theta_0]$, $\hat{z}, \hat{x} \in S^{m-1} \subset \R^{m}$.
\end{lemma}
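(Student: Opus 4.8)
The plan is to estimate $\Re\psi(a+ib,\theta,\hat z)$ and $|a_i(a+ib)|$ separately, using the structure of $\psi$ from \eqref{psidef} and the geometric bounds of Lemma \ref{sab}. Write $\lambda = a+ib$ with $a,b\in\R^m$, $|b|\le 2\pi$. Since $\psi(\lambda,\theta,\hat z) = -i\nu(\theta)\lambda\cdot\hat z + \sqrt{\lambda^2}\coth\sqrt{\lambda^2} - \theta^2/\sin^2\theta$, and $\nu(\theta)$, $\theta^2/\sin^2\theta$ are real and bounded for $\theta\in[0,\theta_0]$ with $\theta_0<\pi$ fixed, the term $-i\nu(\theta)\lambda\cdot\hat z$ contributes $\nu(\theta)\,(b\cdot\hat z)$ to the real part, which is bounded in absolute value by $\nu(\theta_0)\cdot 2\pi =: M$, a constant depending only on $\theta_0$. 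So the real part of $\psi$ equals $\Re\bigl(\sqrt{\lambda^2}\coth\sqrt{\lambda^2}\bigr) + O_{\theta_0}(1)$, and everything reduces to showing $\Re\bigl(\sqrt{\lambda^2}\coth\sqrt{\lambda^2}\bigr) \ge |a| - O(1)$ when $|a|$ is large.

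The key is to understand $w\coth w$ for $w = \sqrt{\lambda^2}$. By Lemma \ref{sab}, writing $w = u+iv$ we have $|u| = |\Re\sqrt{(a+ib)^2}|$ with $|a|-|b| \le |u| \le |a|$ and $0 \le v = \Im\sqrt{(a+ib)^2} \le |b| \le 2\pi$. Thus $|u|$ is large (comparable to $|a|$) while $v$ stays in a fixed compact set away from the poles of $\coth$ at $v = k\pi$ --- more precisely, $v \in [0,2\pi]$, which does hit $v=\pi$ and $v=2\pi$; but those are exactly where $\lambda^2$ would force $\sqrt{\lambda^2}=ik\pi$, i.e. $u=0$, which is incompatible with $|a|$ large. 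So for $|a| \ge c(\theta_0)$ large enough we are uniformly bounded away from the poles. Now as $|u|\to\infty$, $\coth(u+iv)\to \pm 1$ (sign of $u$), uniformly for $v$ in compact sets avoiding the poles; hence $w\coth w = (u+iv)(\pm1 + o(1))$ and $\Re(w\coth w) = |u| + o(|u|) \ge \tfrac{3}{4}|a|$ for $|a|$ large, say. Combining with the $O_{\theta_0}(1)$ error from the first term gives $\Re\psi \ge \tfrac{3}{4}|a| - M \ge \tfrac12|a|$ once $|a| \ge c(\theta_0)$ is chosen large enough (absorbing both $M$ and the $o(|u|)$ term).

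For the bound $|a_i(a+ib)| \le 1$: each $a_i$ is built from $\sqrt{\lambda^2}/\sinh\sqrt{\lambda^2} = w/\sinh w$ (to powers $n$ or $n+1$), times $\cosh w$ in the case of $a_1$, times a linear factor $\lambda\cdot\hat z$ (with $|\lambda\cdot\hat z|\le|\lambda|$) in the case of $a_2$. As $|u|\to\infty$, $|w/\sinh w| = |w|/|\sinh w|$ decays exponentially like $|w|e^{-|u|}$, while $|\cosh w|\le e^{|u|}$ and the linear factor grows only polynomially; so the product decays exponentially and is $\le 1$ for $|a|$ sufficiently large, uniformly in $v\in[0,2\pi]$ once we are bounded away from the zeros of $\sinh$ (again at $v=k\pi$, $u=0$, excluded for $|a|$ large). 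One then takes $c(\theta_0)$ to be the larger of the two thresholds.

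The main obstacle I anticipate is the uniformity: one must verify that the thresholds and the implied constants depend only on $\theta_0$ and not on $\hat z$, $\hat x$, or the particular $a,b$. This is really a matter of packaging the asymptotics of $\coth$ and $1/\sinh$ into explicit inequalities valid on the region $\{|u| \ge c,\ 0\le v\le 2\pi\}$ --- for instance using $|\sinh(u+iv)|^2 = \sinh^2 u + \sin^2 v \ge \sinh^2 u$ and $|\cosh(u+iv)|^2 = \sinh^2 u + \cos^2 v \le \cosh^2 u$ to get clean two-sided control --- rather than any deep idea. The estimate $\sinh^2 u + \sin^2 v$ for $|\sinh w|^2$ also directly handles the subtlety at $v=\pi$: there $\sin v = 0$, but $\sinh^2 u$ is enormous, so no pole is actually approached. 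Writing it this way sidesteps the worry entirely.
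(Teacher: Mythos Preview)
Your proposal is correct and follows essentially the same approach as the paper: decompose $\Re\psi$ into a bounded piece depending on $\theta_0$ plus $\Re(w\coth w)$ with $w=\sqrt{(a+ib)^2}$, use Lemma~\ref{sab} to bound $\Re w$ and $\Im w$, and then exploit the fact that $w\coth w\sim |u|$ for large $|u|$; the paper does the last step via the explicit identity $\Re((\alpha+i\beta)\coth(\alpha+i\beta)) = \frac{\alpha\sinh\alpha\cosh\alpha+\beta\sin\beta\cos\beta}{\cosh^2\alpha-\cos^2\beta}$ rather than your asymptotic $\coth\to\pm1$, but the content is the same. Your treatment of $|a_i|$ via $|\sinh(u+iv)|^2=\sinh^2 u+\sin^2 v\ge\sinh^2 u$ is exactly the inequality the paper uses.
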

\begin{proof}
  Fix $\theta_0 \in [0,\pi)$.  Note first that
  \begin{equation}
    \Re \psi(a + i b, \theta, \hat{z}) = \nu(\theta) b
    \cdot \hat{z} - \Re f(i \theta \hat{z}, \theta, \hat{z}) +
    \Re\left[\sqrt{(a+bi)^2} \coth \sqrt{(a+bi)^2}\right].
  \end{equation}
  By continuity, $\nu(\theta) b
    \cdot \hat{z} - \Re f(i \theta \hat{z}, \theta, \hat{z})$ is
    bounded below by some constant independent of $a$ for all $\theta
    \in [0,\theta_0]$, $\abs{b} \le 2\pi$.  Thus it suffices to
      show that for sufficiently large $\abs{a}$,
  \begin{equation}\label{foo}
    \Re\left[\sqrt{(a+bi)^2} \coth \sqrt{(a+bi)^2}\right] \ge \frac{2}{3}\abs{a}.
  \end{equation}
  Now for $\alpha \in \R$, $\beta \in [-2\pi,2\pi]$ we have
  \begin{align*}
  \Re((\alpha+i\beta) \coth (\alpha + i\beta)) &= \frac{\alpha \sinh \alpha \cosh \alpha + \beta
      \sin \beta \cos \beta}{\cosh^2 \alpha - \cos^2 \beta} \\
    &\ge \alpha \coth \alpha - \frac{\beta}{\cosh^2 \alpha} \\
    &\ge \alpha \coth \alpha - \frac{2\pi}{\cosh^2 \alpha} \\
  &\ge \frac{3}{4} \abs{\alpha}
  \end{align*}
  for sufficiently large $\abs{\alpha}$.  (Recall that $\lim_{\alpha
    \to \pm \infty} \coth\alpha = \pm 1$.)  Thus, since 
  \begin{align*}
    \abs{\Re
  \sqrt{(a+bi)^2}} &\ge \abs{a} - \abs{b} \ge \abs{a} - 2\pi
    \intertext{and}
    \abs{\Im \sqrt{(a+bi)^2}} &\le 2\pi,
  \end{align*}
 it is clear that (\ref{foo}) holds for sufficiently large $\abs{a}$.

  For the bound on $a_i$, note that the $\sinh$ factor in the
  denominator of each $a_i$ can be estimated by
  \begin{equation*}
    \abs{\sinh(\alpha+i\beta)} = \abs{\frac{e^{\alpha+i\beta} -
        e^{-{\alpha+i\beta}}}{2}} \ge \abs{\frac{\abs{e^{\alpha+i\beta}} -
        \abs{e^{-\alpha+i\beta}}}{2}} = \abs{\sinh \alpha}
  \end{equation*}
  so that $\abs{\sinh \sqrt{(a+bi)^2}} \ge \abs{\sinh \Re
    \sqrt{(a+bi)^2}} \ge \abs{\sinh(\abs{a}-2\pi)}$ for $\abs{a} \ge
  2\pi$.  This grows exponentially with $\abs{a}$, so it certainly dominates
  the polynomial growth of the numerator, and we have $\abs{a_i(a+ib)}
  \le 1$ for large enough $\abs{a}$.
\end{proof}

\begin{lemma}\label{move-contour-Rm}
  Let $F(\lambda) := e^{-\frac{\abs{x}^2}{4} \psi(\lambda, \theta, \hat{z})} a_i(\lambda)$ be the
  integrand in (\ref{hi-def}), where $x,z$ are fixed.  If $\tau \in
  \R^m$ with $\abs{\tau} < \pi$, then
  \begin{equation}
    h_i(x,z) = \int_{\R^m} F(\lambda)\,d\lambda = 
    \int_{\R^m} F(\lambda + i \tau)\,d\lambda.
  \end{equation}
\end{lemma}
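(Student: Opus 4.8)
The plan is to shift the contour by Cauchy's theorem --- concretely, by applying Stokes' theorem to the closed holomorphic $m$-form $\omega := F(\lambda)\,d\lambda_1 \wedge \cdots \wedge d\lambda_m$ over the compact region swept out by the straight-line homotopy from $\R^m$ to $\R^m + i\tau$ --- the two ingredients being holomorphy of $F$ on a suitable tube and uniform exponential decay of $F$ at spatial infinity. For the first ingredient, I would check that $F$ is holomorphic on $T := \{a + ib : a,b \in \R^m,\ \abs{b} < \pi\}$: as recorded in the excerpt, the integrand's meromorphic extension is analytic away from the set where $\sqrt{\lambda^2} = ik\pi$ with $k \ge 1$, and Lemma \ref{sab} gives $0 \le \Im\sqrt{(a+bi)^2} \le \abs{b} < \pi \le k\pi$, so $T$ contains no such point. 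Since the homotopy $\Gamma_s := \R^m + is\tau$, $s \in [0,1]$, has imaginary part of norm $\abs{s\tau} \le \abs{\tau} < \pi$, both it and the solid region between $\Gamma_0$ and $\Gamma_1$ lie in $T$; this is the one place the hypothesis $\abs{\tau} < \pi$ enters.

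Next I would record the decay bound. For $\abs{a} \ge c(\theta_0)$ and $\abs{b} \le 2\pi$, Lemma \ref{integrand-bounds} gives $\Re\psi(a+ib,\theta,\hat{z}) \ge \abs{a}/2$ and $\abs{a_i(a+ib)} \le 1$, hence $\abs{F(a+ib)} \le e^{-\abs{x}^2\abs{a}/8}$; for $\abs{a} \le c(\theta_0)$ and $\abs{b} \le \abs{\tau}$, $F$ is continuous on a compact subset of $T$ and thus bounded. In particular both $\int_{\R^m}F(\lambda)\,d\lambda$ and $\int_{\R^m}F(\lambda+i\tau)\,d\lambda$ converge absolutely, and the decay is uniform along the homotopy.

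The main step is to apply Stokes' theorem to $\omega$ on $D_R := \{a + is\tau : a \in [-R,R]^m,\ s \in [0,1]\}$, a compact $(m+1)$-dimensional region contained in $T$. Since $F$ is holomorphic, $\omega$ is closed, so $\int_{\partial D_R}\omega = 0$. The boundary splits into the face $s=0$, contributing $\int_{[-R,R]^m}F(a)\,da$; the face $s=1$, contributing $-\int_{[-R,R]^m}F(a+i\tau)\,da$ (opposite orientation); and $2m$ lateral faces on each of which some $\abs{a_j}=R$, so $\abs{\Re\lambda} \ge R$ and the integral of $\abs{\omega}$ over the face is $\lesssim R^{m-1}e^{-\abs{x}^2 R/8} \to 0$ as $R \to \infty$. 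Letting $R \to \infty$ and using absolute integrability of both pieces yields $\int_{\R^m}F(\lambda)\,d\lambda = \int_{\R^m}F(\lambda+i\tau)\,d\lambda$, which is the claim.

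I expect the only genuine obstacle to be the orientation and measure bookkeeping in the Stokes computation, in particular confirming that the lateral faces vanish in the limit --- which is exactly the role of the uniform estimate $\abs{F(a+ib)} \le e^{-\abs{x}^2\abs{a}/8}$. One could instead shift one coordinate at a time, but the coupling of the coordinates through $\lambda^2 = \lambda \cdot \lambda$ makes the single several-variable argument cleaner.
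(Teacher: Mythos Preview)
Your proposal is correct, and it uses the same two ingredients as the paper --- holomorphy on the tube $\{a+ib : \abs{b}<\pi\}$ via Lemma~\ref{sab}, and the uniform decay $\abs{F(a+ib)} \le e^{-\abs{x}^2\abs{a}/8}$ from Lemma~\ref{integrand-bounds} --- but packages the contour shift differently. You apply Stokes' theorem once to the closed holomorphic $m$-form over the $(m+1)$-dimensional slab $D_R$, whereas the paper does exactly the coordinate-by-coordinate shift you mention and dismiss at the end: it writes $\int_{\R^m}$ as an iterated integral, uses Fubini to bring the $k$-th variable innermost, and applies the one-variable Cauchy theorem on a rectangle to replace $\lambda_k$ by $\lambda_k + i\tau_k$, repeating for $k=1,\dots,m$. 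The coupling through $\lambda^2$ that worried you is handled by the paper simply by observing that after shifting the first $k$ coordinates the imaginary part has norm $\abs{(\tau_1,\dots,\tau_k)} \le \abs{\tau} < \pi$, so one stays inside the tube throughout. Your route is conceptually cleaner and avoids the $m$-fold Fubini bookkeeping; the paper's route is more elementary in that it only invokes one-variable complex analysis. Either works.
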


\begin{proof}
  Note first that $F$ is analytic at $\lambda + ib$ when $\abs{b} <
  \pi$, by the second inequality in Lemma \ref{sab}.  Also, by Lemma
  \ref{integrand-bounds}, we have 
  \begin{equation}\label{F-bound}
\abs{F(\lambda+ib)} \le e^{-\abs{x}^2 \abs{\lambda}/8}
  \end{equation}
as soon as $\abs{\lambda} > c(\theta)$.

  We view $\int_{\R^m} F(\lambda)\,d\lambda$ as $m$ iterated integrals
  and handle them one at a time.  For $1 \le k \le m$, suppose we
  have shown that
  \begin{equation}
    \int_{\R^m} F(\lambda)\,d\lambda
    = \int_\R \dots \int_\R F(\lambda_1 + i\tau_1, \dots, \lambda_{k-1} +
    i\tau_{k-1}, \lambda_{k}, \dots, \lambda_m)\,d\lambda_1 \dots d\lambda_m.
  \end{equation}
  Continuity of $F$ and (\ref{F-bound}) show that $F$ is
  integrable, so we may apply Fubini's theorem and evaluate the
  $d\lambda_k$ integral first:
  \begin{equation*}
     \int_{\R^m} F(\lambda)\,d\lambda
    = \int_\R \dots \int_\R F(\lambda_1 +i \tau_1, \dots, \lambda_{k-1} +
   i \tau_{k-1}, \lambda_{k}, \dots, \lambda_m)\,d\lambda_k d\lambda_1 \dots d\lambda_m.
  \end{equation*}
  Now
  \begin{align*}
    &\quad \int_\R F(\lambda_1 + i\tau_1, \dots, \lambda_{k-1} +
    i\tau_{k-1}, \lambda_{k}, \dots, \lambda_m)\,d\lambda_k  \\
    &= \lim_{\alpha \to \infty} \int_{-\alpha}^{\alpha} F(\lambda_1 +i \tau_1, \dots, \lambda_{k-1} +
   i \tau_{k-1}, \lambda_{k}, \dots, \lambda_m)\,d\lambda_k.
  \end{align*}
  Since $\lambda_k \mapsto F(\lambda_1 + i\tau_1, \dots, \lambda_{k-1} +
    i\tau_{k-1}, \lambda_{k}, \dots, \lambda_m)$ is analytic for
    $\abs{\Im \lambda_k} \le \tau_k$ (which holds because $\abs{(\tau_1, \dots,
    \tau_k)} \le \abs{\tau} < \pi$), we have
    \begin{align*}
      \int_{-\alpha}^{\alpha} F(\dots, \lambda_{k}, \dots)\,d\lambda_k
      = \int_{-\alpha}^{-\alpha+i\tau_k} F +
      \int_{-\alpha+i\tau_k}^{\alpha+i\tau_k} F + \int_{\alpha+i\tau_k}^{\alpha} F
    \end{align*}
    where the contour integrals are taken along straight (horizontal
    or vertical) lines.  But as soon as $\alpha$ exceeds $c(\theta)$ from
    Lemma \ref{integrand-bounds}, (\ref{F-bound}) gives
\begin{align*}
&\quad \int_{-\alpha}^{-\alpha+i\tau_k} \abs{F(\lambda_1 + i\tau_1, \dots, \lambda_{k-1} +
    i\tau_{k-1}, \lambda_{k}, \dots, \lambda_m)}\,d\lambda_k \\ 
  &\le
  \tau_k e^{-\abs{x}^2 \abs{(\lambda_1, \dots, \lambda_{k-1},
      -\alpha, \lambda_k, \dots, \lambda_m)}/8} \\
  &\le \pi e^{-\abs{x}^2 \abs{\alpha}/8} \to 0 \text{ as } \alpha \to \infty.
\end{align*}
A similar argument shows the same for
$\int_{\alpha+i\tau_k}^{\alpha} F$, so we have
\begin{align*}
  &\quad\int_\R F(\lambda_1 + i\tau_1, \dots, \lambda_{k-1} +
  i\tau_{k-1}, \lambda_{k}, \dots, \lambda_m)\,d\lambda_k \\&=
  \int_{-\infty+i\tau_k}^{\infty+i\tau_k} F(\lambda_1 + i\tau_1, \dots, \lambda_{k-1} +
  i\tau_{k-1}, \lambda_{k}, \dots, \lambda_m)\,d\lambda_k \\
  &= \int_\R F(\lambda_1 + i\tau_1, \dots, \lambda_{k-1} +
  i\tau_{k-1}, \lambda_{k}+i\tau_k, \dots, \lambda_m)\,d\lambda_k.
\end{align*}
Thus applying Fubini's theorem again, we have shown
\begin{equation}
    \int_{\R^m} F(\lambda)\,d\lambda
    = \int_\R \dots \int_\R F(\lambda_1 + i\tau_1, \dots, \lambda_{k-1} +
    i\tau_{k-1}, \lambda_{k}+i\tau_k, \dots, \lambda_m)\,d\lambda_1 \dots d\lambda_m.
\end{equation}
Applying this argument successively for $k=1, 2, \dots, m$ establishes
the lemma.
\end{proof}

For the remainder of this section, we assume that $\abs{z} \le
B_1\abs{x}^2$, so that $\theta \le \theta_0(B_1)$. We next show that
the contribution from $\lambda$ far from the origin is negligible.

\begin{lemma}\label{far-from-origin}
  There exist $r > 0$ and a constant $C > 0$ such that
  \begin{equation}
    \abs{\int_{B(0,r)^C} e^{-\frac{\abs{x}^2}{4}\psi(\lambda + i \theta \hat{z}, x, z)} a_i(\lambda +
      i \theta \hat{z})\,d\lambda} \le \frac{C}{\abs{x}^{2m}}. 
  \end{equation}
\end{lemma}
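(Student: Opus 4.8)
The plan is to derive this estimate directly from the pointwise bounds of Lemma~\ref{integrand-bounds}, applied along the shifted ``contour'' $\lambda \mapsto \lambda + i\theta\hat{z}$ on which the integrand lives. First I would record that, since we are in the region $\abs{z} \le B_1\abs{x}^2$, the strict monotonicity of $\nu$ on $[0,\pi)$ (Lemma~\ref{nu-increase}) forces $\theta = \theta(x,z) \le \theta_0 := \nu^{-1}(4B_1) < \pi$. In particular the shift vector $b := \theta\hat{z} \in \R^m$ has $\abs{b} = \theta \le \theta_0 < 2\pi$, so $\abs{\Im\sqrt{(\lambda+ib)^2}} \le \abs{b} < \pi$ by Lemma~\ref{sab}, and hence the integrand $e^{-\frac{\abs{x}^2}{4}\psi(\lambda + ib, \theta, \hat{z})}a_i(\lambda+ib)$ is analytic, with no poles, for every $\lambda \in \R^m$; this also shows (together with Lemma~\ref{move-contour-Rm}, applied with $\tau = \theta\hat z$) that the shifted integral is the object we want to control.

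Next I would take $r := c(\theta_0)$, the constant furnished by Lemma~\ref{integrand-bounds} for this $\theta_0$. For $\abs{\lambda} \ge r$, that lemma applied with $a = \lambda$, $b = \theta\hat{z}$ gives simultaneously $\Re\psi(\lambda + i\theta\hat{z}, \theta, \hat{z}) \ge \abs{\lambda}/2$ and $\abs{a_i(\lambda + i\theta\hat{z})} \le 1$, so that on $B(0,r)^C$ the integrand is dominated in absolute value by $e^{-\frac{\abs{x}^2}{8}\abs{\lambda}}$. Integrating this bound in polar coordinates on $\R^m$, the left-hand side is at most a dimensional constant times
\[
  \int_0^\infty e^{-\frac{\abs{x}^2}{8}\rho}\,\rho^{m-1}\,d\rho \;=\; 8^{m}\,\Gamma(m)\,\abs{x}^{-2m},
\]
which is of the asserted form $C\abs{x}^{-2m}$. (No lower bound on $\abs{x}$ is actually needed here, although one is available in this region from $d(x,z) \ge d_0$ and Corollary~\ref{distance-estimate}.)

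The only point requiring care, which I would make explicit rather than a genuine obstacle, is the uniformity of constants: the $c(\theta_0)$ of Lemma~\ref{integrand-bounds} depends only on the bound $\theta_0 = \theta_0(B_1)$, and not on $(x,z)$, $\hat{z}$, or $\theta$ itself, so the resulting $r$ and $C$ are genuinely uniform over the region $\{\,d(x,z) \ge d_0,\ \abs{z} \le B_1\abs{x}^2\,\}$, as needed downstream for Theorem~\ref{region-I-theorem}. I would also note the harmless identification $\psi(\,\cdot\,, x, z) = \psi(\,\cdot\,, \theta(x,z), z/\abs{z})$ implicit in the statement. Beyond these bookkeeping points the argument is routine.
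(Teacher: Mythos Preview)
Your proposal is correct and follows essentially the same route as the paper: choose $r = c(\theta_0)$ from Lemma~\ref{integrand-bounds}, dominate the integrand on $B(0,r)^C$ by $e^{-\abs{x}^2\abs{\lambda}/8}$, and evaluate the resulting gamma integral in polar coordinates to get $C\abs{x}^{-2m}$. Your additional remarks on uniformity in $(\theta,\hat z)$ and on the notational identification of $\psi(\cdot,x,z)$ with $\psi(\cdot,\theta(x,z),\hat z)$ are correct and helpful clarifications.
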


\begin{proof}
  From Lemma \ref{integrand-bounds}, if $r \ge c(\theta_0)$ we have
  \begin{align*}
    \int_{B(0,r)^C} \abs{e^{-\frac{\abs{x}^2}{4}\psi(\lambda + i \theta \hat{z}, x, z)} a_i(\lambda +
      i \theta \hat{z})}\,d\lambda &\le \int_{B(0,r)^C}
    e^{-\frac{\abs{x}^2}{8}\abs{\lambda}}\,d\lambda \\
    &= \omega_{m-1} \int_r^\infty e^{-\abs{x}^2\rho/8} \rho^{m-1}\,d\rho \\
    &\le \omega_{m-1} \int_0^\infty e^{-\abs{x}^2\rho/8} \rho^{m-1}\,d\rho \\
    &= \omega_{m-1} (b \abs{x}^2)^{-m} \int_0^\infty e^{-\rho}
    \rho^{m-1}\,d\rho \\
    &= \frac{C}{\abs{x}^{2m}}
  \end{align*}
  where $\omega_{m-1}$ is the hypersurface measure of $S^{m-1}$.
\end{proof}

We can now apply a steepest descent argument.  As a similar argument
will be used later in this paper (see Proposition \ref{Fprop-III}), we
encapsulate it in the following lemma.

 \begin{lemma}\label{abstract-lemma-multi}
   Let $\Sigma \subset \R^k$ for some $k$, $r > 0$, $B(0,r)$ the ball of
   radius $r$ in $\R^m$, and $g : B(0,r) \times \Sigma \to \R$, $k :
   \R^{2n} \times [-r,r] \times \Sigma \to \C$ be measurable.  Define $F
   : \R^{2n} \times \Sigma \to \C$ by
   \begin{equation}
     F(x,\sigma) := \int_{B(0,r)} e^{-\abs{x}^2 g(\lambda, \sigma)} k(x,\lambda,\sigma)\,d\lambda.
   \end{equation}
   Suppose:
   \begin{compactenum}
   \item \label{abstract-b1} There exists a positive constant $b_1$ such that
     $g(\lambda,\sigma) \ge b_1 \abs{\lambda}^2$ for all $\lambda \in B(0,r), \sigma
     \in \Sigma$;  
   \item \label{abstract-k2} $k$ is bounded, i.e. $k_2 := \sup_{x \in \R^{2n}, \lambda \in
     B(0,r), \sigma \in \Sigma} \abs{k(x,\lambda,\sigma)} < \infty$.
     \setcounter{continuehere}{\value{enumi}}
   \end{compactenum}

   Then there exists a positive constant $C_2'$ such that
   \begin{equation}
     \abs{F(x,\sigma)} \le \frac{C_2'}{\abs{x}^m}
   \end{equation}
   for all $x > 0$, $\sigma \in \Sigma$.

   If additionally we have:
   \begin{compactenum}\setcounter{enumi}{\value{continuehere}}
   \item \label{abstract-b2} There exists a positive constant $b_2$ such that
     $g(\lambda,\sigma) \le b_2 \abs{\lambda}^2$ for all $\lambda \in B(0,r), \sigma
     \in \Sigma$;
   \item \label{abstract-k1} There exists a function $\epsilon : \R^+ \to [0,r]$ such that
     $\lim_{\rho \to +\infty} \rho \epsilon(\rho) = +\infty$, and
     \begin{equation}
       k_1 := \inf_{x \in \R^{2n}, \lambda \in B(0,\epsilon(\abs{x})),
         \sigma \in \Sigma} \Re k(x,\lambda,\sigma) > 0.
     \end{equation}
   \end{compactenum}

   Then there exist positive constants $C_1'$ and $x_0$ such that
   for all $\abs{x} \ge x_0$ and $\sigma \in \Sigma$ we have 
   \begin{equation}
     \Re F(x,\sigma) \ge \frac{C_1'}{\abs{x}^m}.
   \end{equation}
 \end{lemma}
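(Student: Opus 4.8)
The plan is to handle both halves by the same device: after the rescaling $\mu = \abs{x}\lambda$ the weight $e^{-\abs{x}^2 g(\lambda,\sigma)}$ becomes comparable to a fixed Gaussian $e^{-b\abs{\mu}^2}$, and the Jacobian $d\lambda = \abs{x}^{-m}\,d\mu$ supplies the power $\abs{x}^{-m}$. For the upper bound one only needs a pointwise estimate of the integrand: by hypothesis \ref{abstract-k2}, $\abs{k(x,\lambda,\sigma)} \le k_2$, and by hypothesis \ref{abstract-b1}, $e^{-\abs{x}^2 g(\lambda,\sigma)} \le e^{-b_1\abs{x}^2\abs{\lambda}^2}$ on $B(0,r)$ (in particular $F$ is well defined). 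Enlarging the domain of integration to $\R^m$ and evaluating the Gaussian integral,
\[
  \abs{F(x,\sigma)} \le k_2 \int_{\R^m} e^{-b_1\abs{x}^2\abs{\lambda}^2}\,d\lambda = k_2\left(\frac{\pi}{b_1}\right)^{m/2}\frac{1}{\abs{x}^m},
\]
which is the first conclusion with $C_2' := k_2(\pi/b_1)^{m/2}$; no restriction on $\abs{x}$ is needed.

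For the lower bound I would write $\Re F(x,\sigma) = \int_{B(0,r)} e^{-\abs{x}^2 g(\lambda,\sigma)}\,\Re k(x,\lambda,\sigma)\,d\lambda$ and split the domain at radius $\epsilon(\abs{x})$. On the inner ball $B(0,\epsilon(\abs{x}))$, hypothesis \ref{abstract-k1} gives $\Re k \ge k_1 > 0$ and hypothesis \ref{abstract-b2} gives $e^{-\abs{x}^2 g} \ge e^{-b_2\abs{x}^2\abs{\lambda}^2}$, so the rescaling yields
\[
  \int_{B(0,\epsilon(\abs{x}))} e^{-\abs{x}^2 g}\,\Re k\,d\lambda \ge \frac{k_1}{\abs{x}^m}\int_{B(0,\abs{x}\epsilon(\abs{x}))} e^{-b_2\abs{\mu}^2}\,d\mu;
\]
since $\abs{x}\epsilon(\abs{x}) \to +\infty$, the last integral increases to $(\pi/b_2)^{m/2}$, so for $\abs{x}$ large this term is at least $\tfrac12 k_1(\pi/b_2)^{m/2}\abs{x}^{-m}$.

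On the outer shell $B(0,r)\setminus B(0,\epsilon(\abs{x}))$ there is no sign information on $\Re k$, but there $g(\lambda,\sigma) \ge b_1\abs{\lambda}^2$ with $\abs{\lambda} \ge \epsilon(\abs{x})$, so the same rescaling bounds the absolute value of that contribution by $k_2\abs{x}^{-m}\int_{\abs{\mu}\ge\abs{x}\epsilon(\abs{x})} e^{-b_1\abs{\mu}^2}\,d\mu$, a Gaussian tail tending to $0$. Subtracting, for all sufficiently large $\abs{x}$ one obtains $\Re F(x,\sigma) \ge \tfrac14 k_1(\pi/b_2)^{m/2}\abs{x}^{-m}$; this is the claim with $C_1' := \tfrac14 k_1(\pi/b_2)^{m/2}$ and $x_0$ chosen so that from then on the outer-shell term is at most half the inner-ball term. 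Every constant produced depends only on $m, b_1, b_2, k_1, k_2$, hence all bounds are uniform in $\sigma \in \Sigma$.

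The one genuine obstacle is the outer shell: since $k$ is only known to be bounded there and not of one sign, the argument requires the ball $B(0,\epsilon(\abs{x}))$ to shrink slowly enough that, in rescaled coordinates, the inner Gaussian integral still stabilizes to its full value $(\pi/b_2)^{m/2}$ while the Gaussian mass outside the expanding ball of radius $\abs{x}\epsilon(\abs{x})$ still beats the uniform bound $k_2$ on $\abs{k}$. This is precisely the role of the condition $\rho\epsilon(\rho)\to+\infty$ in hypothesis \ref{abstract-k1}, and it is where the choice of $x_0$ is spent.
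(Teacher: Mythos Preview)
Your proof is correct and follows essentially the same approach as the paper: the upper bound via $\abs{k}\le k_2$, $g\ge b_1\abs{\lambda}^2$, and the Gaussian integral after rescaling $\mu=\abs{x}\lambda$; the lower bound via the inner/outer split at radius $\epsilon(\abs{x})$, with the inner ball controlled by $\Re k\ge k_1$ and $g\le b_2\abs{\lambda}^2$, and the outer shell by the Gaussian tail over $\abs{\mu}\ge\abs{x}\epsilon(\abs{x})$. The only cosmetic difference is your final constant $C_1'=\tfrac14 k_1(\pi/b_2)^{m/2}$ versus the paper's $\tfrac12 k_1(\pi/b_2)^{m/2}$, which merely reflects a different choice of $x_0$.
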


 \begin{proof}
   The upper bound is easy, since
   \begin{align*}
     \abs{F(x,\sigma)} &\le k_2 \int_{B(0,r)} e^{-\abs{x}^2 b_1 \abs{\lambda}^2}\,d\lambda
     \\
     &= \frac{k_2}{\abs{x}^m} \int_{B(0,rx)} e^{-b_1 \abs{\lambda}^2}\,d\lambda \\
     &\le \frac{k_2}{\abs{x}^m} \int_{\R^m} e^{-b_1 \abs{\lambda}^2}\,d\lambda
     \\
     &= \frac{k_2 (\pi/b_1)^{m/2}}{\abs{x}^m}.
   \end{align*}
   For the lower bound, let
   \begin{align*}
     F_1(x,\sigma) &:= \int_{B(0,r) \backslash B(0,\epsilon(\abs{x}))}
     e^{-\abs{x}^2 g(\lambda, \sigma)} k(x,\lambda,\sigma)\,d\lambda \\
     F_2(x,\sigma) &:= \int_{B(0, \epsilon(\abs{x}))} e^{-\abs{x}^2 g(\lambda, \sigma)}
     k(x,\lambda,\sigma)\,d\lambda
   \end{align*}
   so that $F = F_1 + F_2$.  Now we have
   \begin{align*}
     \abs{F_1(x,\sigma)} &\le k_2 \int_{B(0,r) \backslash B(0,\epsilon(\abs{x}))} e^{-\abs{x}^2 b_1 \abs{\lambda}^2}\,d\lambda \\
     &\le k_2 \int_{\R^m \backslash B(0,\epsilon(\abs{x}))} e^{-\abs{x}^2 b_1
       \abs{\lambda}^2}\,d\lambda \\
     &\le \frac{k_2}{\abs{x}^m} \int_{\R^m \backslash B(0,\abs{x}\epsilon(\abs{x}))} e^{- b_1
       \abs{\lambda'}^2}\,d\lambda' \\
   \end{align*}
   where we make the change of variables $\lambda' = \abs{x} \lambda$.
   For $F_2$ we have
   \begin{align*}
     \Re{F_2(x,\sigma)} &\ge k_1 \int_{B(0,\epsilon(\abs{x}))}
       e^{-\abs{x}^2 b_2 \abs{\lambda}^2}\,d\lambda \\
       &= \frac{1}{\abs{x}^m} k_1 \int_{B(0, \abs{x}\epsilon(\abs{x}))} e^{-b_2 \abs{\lambda'}^2}\,d\lambda'.
   \end{align*}
 So we have
 \begin{align*}
   \abs{x}^m \Re F(x,\sigma) &\ge \abs{x}^m \Re F_2(x,\sigma) - \abs{\abs{x}^m F_1(x,\sigma)} \\
   &\ge k_1 \int_{B(0,\abs{x}\epsilon(\abs{x}))} e^{-b_2
   \abs{\lambda'}^2}\,d\lambda' - k_2 \int_{\R^m \backslash B(0,\abs{x}\epsilon(\abs{x}))} e^{- b_1 \abs{\lambda'}^2}\,d\lambda' \\
   &\to  k_1 (\pi/b_2)^{m/2} - 0 > 0
 \end{align*}
 as $\abs{x} \to \infty$.
 So there exists $x_0$ so large that for all $\abs{x} \ge x_0$,
 \begin{equation}\label{F-lower}
   \Re F(x,\sigma) \ge \frac{1}{2} k_1 (\pi/b_2)^{m/2} \frac{1}{\abs{x}^m}
 \end{equation}
 as desired.
 \end{proof}

We need another computation before being able to apply this lemma.

  \begin{lemma}\label{zcothz}
    $\Re \sqrt{(\lambda + i\theta\hat{z})^2} \coth \sqrt{(\lambda +
      i\theta\hat{z})^2} \ge \theta \cot \theta$, with equality iff
    $\lambda = 0$.
  \end{lemma}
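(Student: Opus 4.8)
The plan is to compare $w\coth w$, with $w := \sqrt{(\lambda + i\theta\hat z)^2}$, against its value at $\lambda = 0$---namely $i\theta\coth(i\theta) = \theta\cot\theta$---via the classical partial-fraction (Mittag--Leffler) expansion
\[
  w\coth w = 1 + \sum_{k=1}^\infty \frac{2w^2}{w^2 + k^2\pi^2}.
\]
First I would record that $w^2 = (\lambda + i\theta\hat z)^2 = \abs{\lambda}^2 - \theta^2 + 2i\theta\,(\lambda\cdot\hat z)$, so the choice of branch of $\sqrt{\cdot}$ is irrelevant (everything below depends only on $w^2$), and that $\Re(w^2 + k^2\pi^2) = \abs{\lambda}^2 - \theta^2 + k^2\pi^2 > 0$ for every $k \ge 1$ since $\theta < \pi$; hence no denominator vanishes, every summand is finite, the expansion is valid (and also holds at $w = 0$ by continuity), and the series converges absolutely (terms are $O(k^{-2})$), so real parts may be taken and compared term by term.

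The crux is a termwise estimate. Writing $w^2 = A + iB$ with $A := \abs{\lambda}^2 - \theta^2$ and $B := 2\theta\,(\lambda\cdot\hat z)$, a short computation gives $\Re\frac{2w^2}{w^2+k^2\pi^2} = 2 - \frac{2k^2\pi^2(A+k^2\pi^2)}{(A+k^2\pi^2)^2 + B^2}$, while the corresponding summand at $\lambda = 0$ is $2 - \frac{2k^2\pi^2}{k^2\pi^2-\theta^2} = \frac{-2\theta^2}{k^2\pi^2-\theta^2}$. I would then check that the former dominates the latter: clearing the positive denominators, the inequality reduces to $(A+k^2\pi^2)(k^2\pi^2-\theta^2) \le (A+k^2\pi^2)^2 + B^2$, i.e.\ to $0 \le (A+k^2\pi^2)(A+\theta^2) + B^2$, which is immediate because $A + \theta^2 = \abs{\lambda}^2 \ge 0$ and $A + k^2\pi^2 > 0$. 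Summing over $k$ and adding the constant $1$ gives $\Re(w\coth w) \ge 1 - \sum_{k\ge1}\frac{2\theta^2}{k^2\pi^2-\theta^2} = \theta\cot\theta$, using the standard expansion $x\cot x = 1 - \sum_{k\ge1}\frac{2x^2}{k^2\pi^2 - x^2}$ (at $\theta = 0$ one reads $\theta\cot\theta$ as its limit $1$, consistent with this).

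For the equality clause, equality in the $k$-th comparison forces $(A+k^2\pi^2)(A+\theta^2) + B^2 = 0$, and since $A+k^2\pi^2 > 0$ this gives $A+\theta^2 = \abs{\lambda}^2 = 0$, hence $\lambda = 0$; conversely $\lambda = 0$ yields $w = i\theta$ and $\Re(i\theta\coth(i\theta)) = \theta\cot\theta$ at once. I do not anticipate a substantive obstacle here: the only care needed is the routine justification of interchanging $\Re$ with the infinite sum and comparing summand by summand (supplied by absolute convergence), plus checking the elementary real-part identity displayed above. A slightly longer alternative avoiding series would be to use the closed form $\Re((\alpha+i\beta)\coth(\alpha+i\beta)) = \frac{\alpha\sinh 2\alpha + \beta\sin 2\beta}{\cosh 2\alpha - \cos 2\beta}$ (essentially the computation in the proof of Lemma \ref{integrand-bounds}) together with Lemma \ref{sab}, which gives $\Im w \in [0,\theta]$ and $\abs{\Re w} \le \abs{\lambda}$; one then checks that this expression is nondecreasing in $\alpha \ge 0$ (reducing to $\alpha\coth\alpha \ge \beta\cot\beta$, true since $\alpha\coth\alpha \ge 1 \ge \beta\cot\beta$ on $[0,\pi)$) and that $t\cot t$ is decreasing on $[0,\pi)$, so its value at $\beta \le \theta$ dominates that at $\theta$. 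The partial-fraction route seems cleanest.
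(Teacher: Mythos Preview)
Your proof is correct and takes a genuinely different route from the paper's. The paper writes $w=\alpha+i\beta$, appeals to Lemma~\ref{sab} to bound $0\le\beta\le\theta$, and then chains two elementary inequalities: first $\Re((\alpha+i\beta)\coth(\alpha+i\beta))\ge\beta\cot\beta$ via the closed-form identity
\[
\Re((\alpha+i\beta)\coth(\alpha+i\beta))-\beta\cot\beta=\frac{\sinh^2\alpha\,(\alpha\coth\alpha-\beta\cot\beta)}{\cosh^2\alpha-\cos^2\beta},
\]
then $\beta\cot\beta\ge\theta\cot\theta$ by monotonicity of $t\cot t$ on $[0,\pi)$; equality is traced back through both steps using the equality case of Lemma~\ref{sab}. (This is exactly the ``alternative'' you sketch at the end.) Your Mittag--Leffler argument bypasses Lemma~\ref{sab} and the branch of $\sqrt{\cdot}$ entirely, working directly with $w^2=A+iB$ and reducing the termwise comparison to the transparent inequality $(A+k^2\pi^2)(A+\theta^2)+B^2\ge 0$, where $A+\theta^2=\abs{\lambda}^2$. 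This is cleaner and more self-contained---no auxiliary lemma, no trigonometric identity to verify---and the equality analysis is immediate. The paper's approach has the minor advantage of reusing machinery (Lemma~\ref{sab}, the positivity of $\nu$) already in play elsewhere, but yours is arguably the more elegant standalone argument.
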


\begin{proof}
  We first note that the function $\beta
  \cot \beta$ is strictly decreasing on $[0,\pi)$.  To see this, note
    $\frac{d}{d\beta} \beta \cot \beta = -\nu(\beta)$.  By Corollary
    \ref{nu-c} $\nu(\beta) > 0$.  In particular, $\beta \cot \beta \le 1$.

    Next we observe that for $\alpha \in \R$, $\beta \in [0,\pi)$ we have
  \begin{equation}\label{claim1}
    \Re((\alpha+i\beta) \coth (\alpha + i\beta)) \ge \beta \cot \beta
  \end{equation}
  with equality iff $\alpha = 0$.  This can be seen by verifying that
  \begin{equation}
    \Re((\alpha+i\beta) \coth (\alpha + i\beta)) - \beta \cot \beta =
    \frac{\sinh^2 \alpha (\alpha \coth \alpha - \beta \cot
      \beta)}{\cosh^2 \alpha - \cos^2 \beta}
  \end{equation}
  which is a product of positive terms when $\alpha \ne 0$, since
  $\alpha \coth \alpha > 1 \ge \beta \cot \beta$ and $\cosh^2 \alpha > 1
  \ge \cos^2\beta$.
 
  Therefore, we have
    \begin{align}
      \Re \sqrt{(\lambda + i\theta\hat{z}^2} \coth \sqrt{(\lambda +
      i\theta\hat{z})^2}  &\ge \left(\Im \sqrt{(\lambda +
        i\theta\hat{z})^2}\right) \cot \left(\Im \sqrt{(\lambda +
        i\theta\hat{z})^2}\right) \label{claim-ineq1}\\
      &\ge \theta \cot \theta \label{claim-ineq2}
    \end{align}
    because $0 \le \Im \sqrt{(\lambda +
        i\theta\hat{z})^2} \le \theta < \pi$ by Lemma \ref{sab}.

    If equality holds in (\ref{claim-ineq2}), it must be that $\Im
    \sqrt{(\lambda + i\theta\hat{z})^2} = \theta$.  By Lemma \ref{sab}
    $\lambda$ and $\hat{z}$ are parallel, so $\sqrt{(\lambda +
      i\theta\hat{z})^2} = \pm\abs{\lambda} + i \theta$.  If equality also
    holds in (\ref{claim-ineq1}), we have
    \begin{equation*}
            \Re (\pm\abs{\lambda} + i\theta) \coth(\pm\abs{\lambda} +
            i\theta) = \theta \cot \theta
    \end{equation*}
    so by (\ref{claim1}) it must be that $\abs{\lambda}=0$.  This
    proves the claim.
\end{proof}

\begin{lemma}\label{psi-quadratic}
  Given $r > 0$, there exist constants $b_1, b_2, b_3 > 0$ depending only on
  $r$ and $\theta_0$ such that
  \begin{align}
    b_1 \abs{\lambda}^2 \le \Re \psi(\lambda + i \theta \hat{z}, \theta,
    \hat{z}) &\le b_2 \abs{\lambda}^2 \label{re-psi-quadratic} \\
\intertext{and}
    \abs{\Im \psi(\lambda + i \theta \hat{z}, \theta,
    \hat{z})} &\le b_3 \abs{\lambda}^3 \label{im-psi-cubic}
  \end{align}
  for all $\lambda \in B(0,r) \subset \R^m, \theta \in [0,\theta_0], \hat{z} \in
  S^{m-1} \subset \R^m$.
\end{lemma}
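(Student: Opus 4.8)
The plan is to Taylor-expand the map $\lambda \mapsto \psi(\lambda + i\theta\hat z, \theta, \hat z)$ about its critical point (which in these shifted coordinates is $\lambda = 0$), identify the quadratic term explicitly, and show it is uniformly comparable to $\abs{\lambda}^2$ while the remainder is $O(\abs{\lambda}^3)$; the lower bound on $\Re\psi$ away from the origin will then come from compactness together with the strict positivity already recorded in Lemma \ref{zcothz}.

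First I would record the algebraic identity
\[
  \psi(\lambda + i\theta\hat z, \theta, \hat z) = -i\nu(\theta)\,\lambda\cdot\hat z - \theta\cot\theta + \sqrt{(\lambda+i\theta\hat z)^2}\,\coth\sqrt{(\lambda+i\theta\hat z)^2},
\]
which follows from \eqref{psidef}, the definition \eqref{nu-def} of $\nu$, and the fact that $(\lambda + i\theta\hat z)\cdot\hat z = \lambda\cdot\hat z + i\theta$. Write $g(\zeta) := \sqrt{\zeta^2}\coth\sqrt{\zeta^2}$; since $w\coth w$ is an even analytic function, $g(\zeta) = \Phi(\zeta^2)$ for an analytic $\Phi$, and I would compute the first two $\zeta$-derivatives of $g$ at $\zeta_0 := i\theta\hat z$ (where $\zeta_0^2 = -\theta^2$). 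A short computation gives $\Phi'(-\theta^2) = \nu(\theta)/(2\theta)$, so $\nabla_\zeta g(\zeta_0)\cdot\lambda = 2\Phi'(-\theta^2)\,\zeta_0\cdot\lambda = i\nu(\theta)\,\lambda\cdot\hat z$, which cancels the first term of the identity above — this is just the statement that $i\theta\hat z$ is a critical point of $f$. The Hessian of $g$ at $\zeta_0$ equals $2\Phi'(-\theta^2)I - 4\theta^2\Phi''(-\theta^2)$ times the rank-one projection onto $\R\hat z$, so that, after simplifying the coefficients via the double-angle identities (the coefficient along $\hat z$ reducing to $\nu'(\theta)$, cf. Lemma \ref{nu-increase}),
\[
  \psi(\lambda + i\theta\hat z, \theta, \hat z) = Q(\lambda,\theta,\hat z) + R(\lambda,\theta,\hat z), \qquad Q(\lambda,\theta,\hat z) := \frac{\nu(\theta)}{2\theta}\,\abs{\lambda_\perp}^2 + \frac{\nu'(\theta)}{2}\,(\lambda\cdot\hat z)^2,
\]
where $\lambda_\perp := \lambda - (\lambda\cdot\hat z)\hat z$, $Q$ is the quadratic Taylor term (note it is real-valued), and $R$ is the remainder of order three.

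The estimates then follow quickly. Because $\theta_0 < \pi$, Lemma \ref{sab} keeps $\lambda + i\theta\hat z$ away from the singular set of $g$ (where $\sqrt{(\cdot)^2} \in i\pi\Z\setminus\{0\}$), so $\lambda\mapsto\psi(\lambda+i\theta\hat z,\theta,\hat z)$ is real-analytic on a neighborhood of $\overline{B(0,r)}$, with third-order $\lambda$-derivatives bounded uniformly over the compact set $\overline{B(0,r)}\times[0,\theta_0]\times S^{m-1}$; hence $\abs{R} \le M\abs{\lambda}^3$ on $B(0,r)$ for some $M = M(r,\theta_0)$. Since $Q$ is real, $\abs{\Im\psi} = \abs{\Im R} \le M\abs{\lambda}^3$, which is \eqref{im-psi-cubic}. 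For \eqref{re-psi-quadratic}, Lemma \ref{nu-increase} gives $\nu'(\theta) \ge c$ and Corollary \ref{nu-c} gives $\nu(\theta)/\theta \ge c$ on $[0,\theta_0]$ (the latter with removable value $\nu'(0) = 2/3$ at $\theta=0$), while both quantities are continuous, hence bounded above, on $[0,\theta_0]$; therefore $C_1\abs{\lambda}^2 \le Q \le C_2\abs{\lambda}^2$ with $C_1, C_2$ depending only on $\theta_0$. The upper bound is then $\Re\psi = Q + \Re R \le (C_2 + Mr)\abs{\lambda}^2$. For the lower bound, $\Re\psi \ge C_1\abs{\lambda}^2 - M\abs{\lambda}^3 \ge \tfrac12 C_1\abs{\lambda}^2$ whenever $\abs{\lambda} \le \delta := C_1/(2M)$; and on the compact shell $\{\delta \le \abs{\lambda} \le r\}\times[0,\theta_0]\times S^{m-1}$ the continuous function $\Re\psi(\lambda+i\theta\hat z,\theta,\hat z) = \Re g(\lambda+i\theta\hat z) - \theta\cot\theta$ is strictly positive by Lemma \ref{zcothz}, hence bounded below by some $m_0 > 0$, so $\Re\psi \ge m_0 \ge (m_0/r^2)\abs{\lambda}^2$ there. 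Taking $b_1 = \min(C_1/2,\, m_0/r^2)$, $b_2 = C_2 + Mr$, $b_3 = M$ completes the proof, and all three constants depend only on $r$ and $\theta_0$.

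The step I expect to be the main obstacle is the Hessian computation producing the clean form of $Q$: keeping track of the branch of $\sqrt{\cdot}$ and reducing the two eigenvalues of the Hessian to $\nu(\theta)/(2\theta)$ and $\nu'(\theta)/2$ takes some care, even though each individual calculation is elementary. Everything after that amounts to assembling Lemmas \ref{nu-increase}, \ref{sab}, \ref{zcothz} and Corollary \ref{nu-c}, which were set up for precisely this purpose.
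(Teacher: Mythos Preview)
Your proposal is correct and follows essentially the same approach as the paper: Taylor-expand at the critical point, compute the Hessian eigenvalues as $\nu'(\theta)$ and $\nu(\theta)/\theta$, bound them uniformly via Lemma~\ref{nu-increase} and Corollary~\ref{nu-c}, and then use Lemma~\ref{zcothz} with compactness to extend the lower bound on $\Re\psi$ away from the origin. The only cosmetic difference is that the paper computes the Hessian directly from $\partial^2\psi/\partial\lambda_i\partial\lambda_j$ rather than via the auxiliary function $\Phi(\zeta^2)$, but the resulting quadratic form and the remaining argument are identical.
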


\begin{proof}
  Note first that $\psi(\lambda+i\theta\hat{z}, \theta, \hat{z})$ is
  smooth for $\theta \in [0,\theta_0]$ since $\Im
  \sqrt{(\lambda+i\theta\hat{z})} \le \theta \le \theta_0 < \pi$, so
  that we are avoiding the singularities of $w \coth w$.

  We have $\psi(i\theta\hat{z}, \theta, \hat{z}) = 0$ and
  $\grad_{\lambda} \psi(i\theta\hat{z}, \theta, \hat{z}) = 0$.  We now
  show the Hessian $H(i \theta \hat{z})$ of $\psi$ at $i\theta\hat{z}$
  is real and uniformly positive definite.

  By direct computation, we can find
  \begin{equation}
    \frac{\partial^2}{\partial \lambda_i \partial \lambda_j}
    \psi(\lambda, \theta, \hat{z}) = \nu'(-i \sqrt{\lambda^2})
    \frac{\lambda_i \lambda_j}{\lambda^2} + i \frac{\nu(-i\sqrt{\lambda^2})}{\sqrt{\lambda^2}}
    \left({\delta_{ij}} - \frac{\lambda_i
      \lambda_j}{\lambda^2}\right)
  \end{equation}
  so that for $u \in \R^m$,
  \begin{equation}
    H(\lambda) u \cdot u = \nu'(-i \sqrt{\lambda^2})
    \frac{(\lambda \cdot u)^2}{\lambda^2} + i \frac{\nu(-i\sqrt{\lambda^2})}{\sqrt{\lambda^2}}
    \left(\abs{u}^2 - \frac{(\lambda \cdot u)^2}{\lambda^2}\right)
  \end{equation}
  and in particular
  \begin{align*}
    H(i \theta \hat{z}) u \cdot u &= \nu'(\theta)
    (\hat{z} \cdot u)^2 + \frac{\nu(\theta)}{\theta}
    \left(\abs{u}^2 - \hat{z} \cdot u)^2\right) \\
    &= \abs{u}^2 \left(s \nu'(\theta)
    + \frac{\nu(\theta)}{\theta}(1-s)\right)
  \end{align*}
  where $s := \left(\frac{\hat{z} \cdot u}{\abs{u}}\right)^2$, so $0
  \le s \le 1$.  Note this is a real number whenever $u \in \R^m$.
  Thus we have $H(i\theta\hat{z}) u \cdot u$ written as a convex
  combination of two real functions of $\theta$, so
  \begin{equation}
    H(i\theta\hat{z}) u \cdot u \ge \abs{u}^2 
    \min\{\frac{\nu(\theta)}{\theta}, \nu'(\theta)\} \ge c \abs{u}^2 
  \end{equation}
  where $c$ is the lesser of the two constants provided by Lemma
  \ref{nu-increase} and Corollary \ref{nu-c} respectively.  This is
  valid for $\theta > 0$ and hence by continuity also for $\theta =
  0$.

  By Taylor's theorem, this shows that (\ref{re-psi-quadratic}) and
  (\ref{im-psi-cubic}) hold for small $\lambda$.  The upper bounds
  thus automatically hold for all $\lambda \in B(0,r)$ by continuity.
  To obtain the lower bound on $\Re \psi$, it will suffice to show
  $\Re \psi > 0$ for all $\lambda \ne 0$.  But we have 
    \begin{align*}
      \Re \psi(\lambda + i \theta \hat{z}, \theta, \hat{z}) &= \theta \nu(\theta) - \Re f(i \theta \hat{z}, \theta, \hat{z}) +
      \Re\left[\sqrt{(\lambda+i\theta\hat{z})^2} \coth
        \sqrt{((\lambda+i\theta\hat{z})^2}\right] \\
      &= \theta \nu(\theta) - \frac{\theta^2}{\sin^2\theta} +
      \Re\left[\sqrt{(\lambda+i\theta\hat{z})^2} \coth
        \sqrt{((\lambda+i\theta\hat{z})^2}\right] \\
      &= -\theta \cot \theta + \Re\left[\sqrt{(\lambda+i\theta\hat{z})^2} \coth
        \sqrt{((\lambda+i\theta\hat{z})^2}\right] \\ &\ge 0
    \end{align*}
    by Lemma \ref{zcothz}, with equality iff $\lambda = 0$.
\end{proof}

The proof of Theorem \ref{region-I-theorem} can now be completed.

\begin{proof}[Proof of Theorem \ref{region-I-theorem}]
  
We establish (\ref{I-p-both}) first.  We can apply Lemma
\ref{abstract-lemma-multi} with $\Sigma := [0, \theta_0] \times
S^{m-1}$, $\sigma = (\theta, \hat{z})$, $r$ the value from Lemma
\ref{far-from-origin}, and
\begin{align*}
  g(\lambda, (\theta, \hat{z}))
&:= \frac{1}{4} \Re\psi(\lambda + i \theta \hat{z}, \theta, \hat{z}) \\
  k(x, \lambda, (\theta, \hat{z})) &:= e^{i \frac{\abs{x}^2}{4}
    \Im\psi(\lambda + i \theta \hat{z}, \theta, \hat{z})} a_0(\lambda+i\theta\hat{z}).
\end{align*}
The necessary bounds on $g$ come from (\ref{re-psi-quadratic}).  For
an upper bound on $k$, we have $\abs{k(x, \lambda, (\theta, \hat{z}))}
= \abs{a_0(\lambda+i\theta\hat{z})}$, which is bounded by the fact
that $(\lambda, \theta, \hat{z})$ ranges over the bounded region
$B(0,r) \times [0,\theta_0] \times S^{m-1}$ which avoids the
singularities of $a_0$.

Now for the lower bound on $k$.  By direct
computation, we have $a_0(i \theta \hat{z}) =
\left(\frac{\theta}{\sin\theta}\right)^n \ge 1$; by continuity there
exists $\delta$ such that $\Re e^{is} a_0(\lambda +i \theta \hat{z}) \ge \frac{1}{2}$
for all $\abs{\lambda} \le \delta$ and $\abs{s} \le \delta$, where $s
\in \R$.  If $\abs{\lambda} \le \abs{x}^{-2/3} \delta / b_3$, where
$b_3$ is as in (\ref{im-psi-cubic}), we will have $\abs{x}^2 \abs{\Im
  \psi(\lambda + i \theta \hat{z})} \le \delta$.  Thus set
$\epsilon(x) := \min\{\delta, \abs{x}^{-2/3} \delta / b_3\}$, so that
$\Re k(x, \lambda, (\theta, \hat{z}) \ge \frac{1}{2}$ for all $\abs{\lambda}
\le \epsilon(x)$ and all $(\theta, \hat{z}) \in \Sigma$, and
$\lim_{\rho \to \infty} \rho \epsilon(\rho) = \lim_{\rho \to \infty}
\rho^{1/3} \delta / b_3 = +\infty$.

Thus Lemma \ref{abstract-lemma-multi} applies, and so combining it
with Lemmas \ref{move-contour-Rm} and \ref{far-from-origin} we have
that there exist positive constants $C, C_1', C_2', x_0$ such that
\begin{equation}
  \left(\frac{C_1'}{\abs{x}^m} - \frac{C}{\abs{x}^{2m}}\right)
  e^{-\frac{1}{4}d(x,z)^2} \le p_1(x,z) \le \left(\frac{C_2'}{\abs{x}^m} + \frac{C}{\abs{x}^{2m}}\right)
  e^{-\frac{1}{4}d(x,z)^2}.
\end{equation}
whenever $\abs{x} \ge x_0$.  We can choose $x_0$ larger if necessary
so that $\abs{x}^{-m} \gg \abs{x}^{-2m}$.  Then taking $d_0 = x_0$
will establish (\ref{I-p-both}).

For $q_i$, the upper bound is similar; $\abs{a_i}$ is bounded above
just like $\abs{a_0}$, establishing (\ref{I-q-upper}).

For (\ref{I-q-lower}), we cannot necessarily bound both $\abs{q_i}$
below simultaneously, but it suffices to take them one at a time.  For
$0 \le \theta(x,z) \le \frac{\pi}{4}$, we have $a_1(i \theta \hat{z})
= \cos \theta \left(\frac{\theta}{\sin\theta}\right)^{n+1} \ge
\frac{1}{\sqrt{2}}$, so by the above logic we obtain the desired lower
bound on $\abs{q_1}$ for such $\theta$.  If $\frac{\pi}{4} \le \theta
\le \theta_0$, we estimate $q_2$ in the same way, since we have $a_2(i
\theta \hat{z}) = \left(\frac{\theta}{\sin\theta}\right)^n \theta \ge
\frac{\pi}{4}.$
\end{proof}

\section{Polar coordinates}\label{polar-sec}

In this section, we obtain estimates for $p_1(x,z)$ and $\abs{\grad
  p_1(x,z)}$ when $\abs{z} \ge B_1 \abs{x}^2$, where $B_1$ is
sufficiently large.  This means that $\theta(x,z) \ge \theta_0$ for
some $\theta_0$ near $\pi$.  Note that by Corollary
\ref{distance-estimate}, we have  $d(x,z) \asymp \sqrt{\abs{z}}$ in
this region.

We first consider $p_1$ and show the following.

\begin{theorem}\label{region-II-III-theorem}
  For $m$ odd, there exist constants $B_1, d_0$ such that
  \begin{equation}
    p_1(x,z) \asymp
    \frac{\abs{z}^{n-\frac{m+1}{2}}}{1+(\abs{x}\sqrt{\abs{z}})^{n-\frac{1}{2}}}
    e^{-\frac{1}{4}d(x,z)^2}
  \end{equation}
  or, equivalently,
  \begin{equation}
  p_1(x,z) \asymp \frac{d(x,z)^{2n-m-1}}{1+(\abs{x}d(x,z))^{n-\frac{1}{2}}}
    e^{-\frac{1}{4}d(x,z)^2}
  \end{equation}
  for $\abs{z} \ge B_1 \abs{x}^2$ and $\abs{z} \ge d_0$
  (equivalently, $d(x,z) \ge d_0$).
\end{theorem}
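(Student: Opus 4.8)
The plan is to use the radial symmetry of $p_1$ to collapse the $m$-dimensional integral (\ref{Rm-integral}) to a one-dimensional complex integral, and to evaluate the latter by residues — it is precisely the evaluation of the angular integral that forces $m$ odd. Writing $\lambda = \rho\omega$ with $\rho = |\lambda|\ge 0$ and $\omega\in S^{m-1}$, one has $\int_{S^{m-1}} e^{i\rho\langle\omega,z\rangle}\,d\omega = c_m(\rho|z|)^{1-m/2}J_{m/2-1}(\rho|z|)$; for $m = 2\ell+1$ this is a half-integer-order Bessel function, hence an elementary expression — a finite linear combination of $\sin(\rho|z|)/(\rho|z|)^{j}$ and $\cos(\rho|z|)/(\rho|z|)^{j}$ over finitely many integers $j\ge 0$. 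Inserting this together with the polar factor $\rho^{m-1}\,d\rho$ writes $p_1(x,z)$ as a finite sum of integrals of the form
\begin{equation*}
  |z|^{-j}\int_0^\infty e^{-\frac14\rho\coth\rho\,|x|^2}\Bigl(\frac{\rho}{\sinh\rho}\Bigr)^{\!n}\rho^{\,2\ell-j}\,\tau_j(\rho|z|)\,d\rho,\qquad \tau_j\in\{\sin,\cos\}.
\end{equation*}
The integrand of the original combination is regular at $\rho = 0$ and even in $\rho$ (the spurious poles at $0$ from the individual negative powers of $\rho|z|$ cancel in the sum, as $p_1$ is smooth), and it extends to a function meromorphic on $\C$ with poles only at $\rho\in i\pi\Z\setminus\{0\}$; splitting $\tau_j=\frac12(e^{i(\cdot)}\pm e^{-i(\cdot)})$ and symmetrizing in $\rho$, we reduce to integrals over $\R$ of the $e^{+i\rho|z|}$-pieces, the $e^{-i\rho|z|}$-pieces supplying the mirror-image (complex-conjugate) contribution and guaranteeing $p_1$ real.

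I would then move the contour off $\R$. In this region $d(x,z)^2\asymp|z|$ (Corollary \ref{distance-estimate}), and in fact $d(x,z)^2 = 4|z|\,\theta^2/(\theta-\sin\theta\cos\theta) < 4\pi|z|$ (from (\ref{distance-formula}), since $\theta^2/(\theta-\sin\theta\cos\theta)$ is increasing and less than $\pi$ on $(\pi/2,\pi)$, as shown in the proof of Theorem \ref{distance}); so, choosing $B_1$ large, one checks that along a horizontal line $\Im\rho = c$ with $\pi<c<2\pi$ the factor $e^{-\frac14\rho\coth\rho|x|^2}$ grows no faster than $e^{C|z|/B_1}$, which the gain $e^{-c|z|}$ from $e^{i\rho|z|}$ overwhelms. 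Shifting $\R$ up to this line thus shows that the integral over the shifted line is exponentially smaller than the claimed main term, so that $p_1(x,z)$ equals $2\pi i$ times the sum of the residues of the (finitely many) integrands at $\rho = i\pi$, modulo an exponentially negligible error. Writing $\rho = i\pi+w$, the phase $-\frac14\rho\coth\rho|x|^2 + i\rho|z|$ has singular part $-\frac{i\pi|x|^2}{4w}$ while $(\rho/\sinh\rho)^n$ has a pole of order $n$; the $w^{-1}$-coefficient is a confluent (modified-Bessel-type) series $\sum_{k\ge0}a^{k}b^{\,n-1+k}/(k!\,(n-1+k)!)$ in the combination $ab\asymp|x|^2|z|$. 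Carrying the full Laurent expansions of $\rho\coth\rho$ and $(\rho/\sinh\rho)^n$ about $i\pi$ and summing over the $\tau_j$-pieces — which must reorganize so that the exponential factor comes out as $e^{-\frac14 d(x,z)^2}$ exactly, consistently with the steepest-descent picture in which the saddle $\rho = i\theta$ of the phase lies a distance $\pi-\theta\asymp|x|/\sqrt{|z|}$ below the pole — one finds that the residue equals $e^{-\frac14 d(x,z)^2}$ times a power of $|z|$ times a positive quantity comparable to $I_{n-1}\!\bigl(c'|x|\sqrt{|z|}\bigr)\big/\bigl(|x|\sqrt{|z|}\bigr)^{n-1}$. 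It is convenient to split $\{\,|z|\ge B_1|x|^2\,\}$ according to whether $|x|\sqrt{|z|}$ (equivalently $|x|d(x,z)$) is bounded or large — the two ``regions'' of the statement — since the residue is then controlled by different terms.

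Finally, I would convert this to the asserted bound using the elementary comparison $I_{n-1}(t)/t^{\,n-1}\asymp e^{t}/(1+t^{\,n-1/2})$ for $t\ge 0$ (which follows from $I_{n-1}(t)\sim(t/2)^{n-1}/(n-1)!$ as $t\downarrow 0$ and $I_{n-1}(t)\sim e^{t}/\sqrt{2\pi t}$ as $t\to\infty$, together with positivity and monotonicity), the scaling $d(x,z)\asymp\sqrt{|z|}$ to pass between the two equivalent forms in the statement, and continuity and positivity of $p_1$ near the slice $x = 0$. The main obstacle is the uniform estimation of the residue at $\rho = i\pi$: because the essential singularity of $e^{-\frac14\rho\coth\rho|x|^2}$ sits on top of the order-$n$ pole of $(\rho/\sinh\rho)^n$ and the governing saddle $i\theta$ collapses onto that pole as $|z|/|x|^2\to\infty$, one must retain enough of the local expansion to recover the true exponent $e^{-\frac14 d^2}$ (not merely the crude $e^{-\pi|z|}$) and to obtain matching upper and lower polynomial factors uniformly, including across the transition $|x|\sqrt{|z|}\sim 1$; for the lower bound one must in addition rule out cancellation among the several residue contributions coming from the pieces $\tau_j$ of the half-integer Bessel function.
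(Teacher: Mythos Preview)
Your plan is essentially the paper's: pass to polar coordinates, use the half-integer Bessel (Hankel) expansion to reduce to a finite sum of one-dimensional integrals, shift the contour past $\rho=i\pi$, discard the line integral, and analyze the residue. Your identification of the leading residue with $e^{-y/2}I_{n-1}(y/2)$ (where $y\asymp|x|\sqrt{|z|}$) is correct and in fact coincides exactly with the paper's function $F(y,0)$; the paper does not name the Bessel function explicitly, but its Lemma on the coefficients $g_k(0)$ is precisely the Taylor expansion of $I_{n-1}$, and the two regimes of its Propositions~\ref{Fprop-II} and~\ref{Fprop-III} are just the small- and large-argument asymptotics of $e^{-y/2}I_{n-1}(y/2)$.

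Where your outline is thinner than the paper is exactly the point you flag as the main obstacle: getting the exponential to be $e^{-d^2/4}$ rather than the crude $e^{-\pi|z|}$, uniformly in $s=\pi-\theta$. The paper resolves this not by ``reorganizing'' the residue after the fact but by factoring $e^{-d^2/4}$ out \emph{before} computing it: one writes the integrand as $e^{-\frac14|x|^2\psi(\rho,\theta)}a(\rho)$ with $\psi$ normalized so that $\psi(i\theta,\theta)=0$, so the residue is automatically $e^{-d^2/4}$ times $h_r(x,z)$. The substance is then to show that $s^{n-1}h_r$, viewed as a function $F(y,s)$, is jointly analytic in $(y,s)$ near $s=0$ with $F(y,0)>0$ for $y>0$; this is what the paper's Laurent computation (its Lemma~\ref{g_m} and Corollary~\ref{Fstructure}) delivers, and it is exactly the uniform perturbation off the Bessel function $F(y,0)$ that your sketch leaves implicit. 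Without this analyticity step your approach would have to control the correction terms $sU(w,s)$ in the exponent directly on a contour whose radius depends on $s$, which is doable (the paper in effect does it in Proposition~\ref{Fprop-III}) but is where the actual work lies.
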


The effect of the requirement that $\abs{z} \le B_1 \abs{x}^2$ in the
previous section was to ensure that the critical point
$i\theta\hat{z}$ stayed away from the singularities of the integrand.
As $B_1 \to \infty$, the critical point approaches the set of
singularities, and the change of contour we used is no longer
effective; the constants in the estimates of Theorem
\ref{region-I-theorem} blow up.  In the case of the Heisenberg groups,
where the center of $G$ has dimension $m=1$, the singularity is a
single point, and the technique used in \cite{hueber-muller} and
\cite{bgg} is to move the contour past the singularity and concentrate
on the resulting residue term.  For $m > 1$, the singularities form a
large manifold and this technique is not easy to use directly.
However, by making a change to polar coordinates, we can reduce the
integral over $\R^m$ to one over $\R$; this replaces the Fourier
transform by the so-called Hankel transform.  (A similar approach is
used in \cite{randall} in the context of $L^p$ estimates for the
analytic continuation of $p_t$.)  When $m$ is odd, we recover a
formula very similar to that for $m=1$, and the above-mentioned
technique is again applicable.

For the rest of this section, we assume that $m$ is odd.

For $m \ge 3$, we write (\ref{Rm-integral}) in polar coordinates to obtain
\begin{align}
  p_1(x,z) &= (2\pi)^{-m}(4\pi)^{-n} \int_0^\infty \int_{S^{m-1}} e^{{i}
    \rho \sigma \cdot z}\,d\sigma e^{-\frac{\abs{x}^2}{4} \rho \coth
    \rho} \left(\frac{\rho}{\sinh\rho}\right)^n \rho^{m-1} \,d\rho \\
  &= \frac{(2\pi)^{-m}(4\pi)^{-n}}{2} \int_{-\infty}^\infty \int_{S^{m-1}} e^{{i}
    \rho \sigma \cdot z}\,d\sigma e^{-\frac{\abs{x}^2}{4} \rho \coth
    \rho} \left(\frac{\rho}{\sinh\rho}\right)^n \rho^{m-1} \,d\rho
\end{align}
since the integrand is an even function of $\rho$.  (To see this, make
the change of variables $\sigma \to -\sigma$ in the $d\sigma$
integral.  It is not true when $m$ is even.)

The $d\sigma$ integral can be written in terms of a Bessel function.
Using spherical coordinates, we can write, for arbitrary $\hat{v} \in
S^{m-1}$ and $w \in \C$,
\begin{align*}
  \int_{S^{m-1}} e^{i w \sigma \cdot \hat{v}}\,d\sigma
  &= \frac{2 \pi^{\frac{m-1}{2}}}{\Gamma\left(\frac{m-1}{2}\right)}
  \int_{0}^{\pi} e^{iw \cos\varphi} \sin^{m-2}\varphi \,d\varphi \\
  &= \frac{4 \pi^{\frac{m-1}{2}}}{\Gamma\left(\frac{m-1}{2}\right)}
  \int_{0}^{\frac{\pi}{2}} \cos(w \cos\varphi) \sin^{m-2}\varphi
  \,d\varphi && \text{(by symmetry)}\\
  &= \frac{(2\pi)^{m/2}}{w^{m/2-1}}J_{m/2-1}(w) 
\intertext{(see page 79 of \cite{magnus})}
  &= \Re \frac{(2\pi)^{m/2}}{w^{m/2-1}} H_{m/2-1}^{(1)}(w)
\end{align*}
where $H_\nu(w)$ is the Hankel function of the first kind, defined by
$H_\nu(w) = J_\nu(w) + i Y_\nu(w)$, with $Y_\nu$ the Bessel function
of the second kind.  Page 72 of \cite{magnus}
has a closed-form expression for $H_\nu$ which yields
\begin{equation}\label{hankel-expansion}
  S_m(w) =  2(2\pi)^{\frac{m-1}{2}} \Re \left[\frac{e^{iw}}{w^{m-1}}
  \sum_{k=1}^{\frac{m-1}{2}} c_{m,k} (-iw)^k\right]
\end{equation}
where the coefficients are
\begin{align*}
  c_{m,k} &=  \frac{(m-k-2)!}{ 2^{\frac{m-1}{2}-k}
    \left(\frac{m-1}{2}-k\right)! (k-1)!} > 0.
\end{align*}

The reason for the use of the Hankel function is the appearance of the
$e^{iw}$ factor, which gives us an integrand looking much like that
for $p_t$ when $m=1$.  This will allow us to apply similar techniques
to those which have been used previously for $m=1$.
We have
\begin{align}
  p_1(x,z) &= (\Re) \sum_{k=1}^{(m-1)/2} c_{m,k} \abs{z}^{k-m+1} \int_{-\infty}^\infty
  e^{{i} \rho \abs{z} -\frac{\abs{x}^2}{4} \rho \coth
    \rho} \frac{\rho^{n}}{\sinh^n\rho} (-i\rho)^{k}
  \,d\rho \label{p-expansion} \\
  &= \sum_{k=1}^{(m-1)/2} c_{m,k} \abs{z}^{k-m+1} e^{-\frac{1}{4}d(x,z)^2}\int_{-\infty}^\infty
  e^{-\frac{\abs{x}^2}{4} \psi(\rho, \theta)} a_k(\rho)
  \,d\rho \label{p-expansion-phi}
\end{align}
where, using similar notation as before,
\begin{align}
  \psi(\rho, \theta) &:= -i \nu(\theta) \rho + \rho \coth \rho -
  \frac{\theta^2}{\sin^2\theta} \\
  a_k(\rho) &:= \left(\frac{\rho}{\sinh\rho}\right)^n (-i\rho)^k
\end{align}
The constants and coefficients have all been absorbed into the
$c_{m,k}$; we note that $c_{1,0} > 0$, $c_{m,k} > 0$ for $k \ge 1$,
and $c_{m,0}=0$ for $m > 1$. We dropped the $(\Re)$ because the
imaginary part vanishes, being the integral of an odd function.

For $m=1$, we can write
\begin{equation}\label{p-expansion-m1}
  p_1(x,z) = (4\pi)^{-n} e^{-\frac{1}{4}d(x,z)^2}\int_{-\infty}^\infty
  e^{-\frac{\abs{x}^2}{4} \psi(\rho, \theta)} a_0(\rho)
  \,d\rho
\end{equation}

The integrals appearing in the terms of the sum in
(\ref{p-expansion-phi}), as well as in (\ref{p-expansion-m1}), are all
susceptible to the same estimate, as the following theorem shows.

\begin{theorem}\label{h-estimate}
  Let $D \subset \C$ be the strip $D = \{ 0 \le \Im \rho \le 3
  \pi/2\}$.  Suppose $a(\rho)$ is a function analytic on $D \backslash
  \{i\pi\}$, with a pole of order $n$ at $\rho = i\pi$, $a(i \theta)
  \ge 1$ for $\theta_0 \le \theta < \pi$, and $\int_\R
  \abs{a(\rho+3i\pi/2)}\,d\rho < \infty$.  Let
  \begin{equation}
    h(x,z) := \int_{-\infty}^\infty e^{-\frac{\abs{x}^2}{4} \psi(\rho,
      \theta)} a(\rho) \,d\rho.
  \end{equation}
 There exist $B_1, d_0$ such that
 \begin{equation}
\Re  h(x,z) \asymp \frac{\abs{z}^{n-1}}{1+(\abs{x}\sqrt{\abs{z}})^{n-\frac{1}{2}}}
 \end{equation}
 for all $(x,z)$ with $\abs{z} \ge B_1 \abs{x}^2$ and $\abs{z} \ge d_0$.
\end{theorem}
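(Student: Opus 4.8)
The plan is to estimate $\Re h$ by deforming the contour of integration, using two facts about the phase $\psi(\cdot,\theta)$: it has a nondegenerate critical point at $\rho=i\theta$ with $\psi(i\theta,\theta)=0$ and $\partial_\rho^2\psi(i\theta,\theta)=\nu'(\theta)>0$ (Lemma~\ref{nu-increase}), and the only singularity of the integrand inside the strip $D$ is the order-$n$ pole of $a$ at $\rho=i\pi$ (the factor $\rho\coth\rho$ contributes only a simple pole there, since its poles lie at $ik\pi$, $k\neq0$). Put $\kappa:=\pi-\theta$. In the region $\abs z\ge B_1\abs x^2$ the angle $\theta$ lies in a fixed neighbourhood of $\pi$ (choosing $B_1$ large forces $\theta\ge\theta_0$ near $\pi$), so $\kappa$ is small; from $\nu(\theta)=4\abs z/\abs x^2$ and the fact that $\nu(\theta)\kappa^2$ and $\nu'(\theta)\kappa^3$ have positive limits as $\theta\to\pi$ one gets $\kappa\asymp\abs x/\sqrt{\abs z}$, $\nu'(\theta)\asymp\kappa^{-3}$, and $d(x,z)\asymp\sqrt{\abs z}$ (Corollary~\ref{distance-estimate}), which I will use to convert freely between $\kappa$, $d$ and powers of $\abs z$. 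The crucial feature is that the saddle $i\theta$ and the pole $i\pi$ lie a distance $\kappa$ apart, while the Gaussian produced by the saddle has width $\sim(\abs x\sqrt{\nu'(\theta)})^{-1}\asymp\kappa^{3/2}/\abs x$; the ratio of these is $\asymp\kappa^{1/2}/\abs x\asymp(\abs x\sqrt{\abs z})^{-1}$, exactly the quantity in the statement. I therefore split into two regimes with a large threshold $\Lambda$.

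\emph{Regime $\abs x\sqrt{\abs z}\ge\Lambda$ (saddle-dominated).} Since $\theta<\pi$ the integrand is analytic and rapidly decaying on vertical lines in $\{0\le\Im\rho\le\theta\}$, so $h(x,z)=\int_{\R}e^{-\frac{\abs x^2}{4}\psi(i\theta+t,\theta)}a(i\theta+t)\,dt$. On this contour I run a steepest-descent estimate in the spirit of Lemmas~\ref{psi-quadratic} and~\ref{abstract-lemma-multi}, but rescaled by $\sqrt{\nu'(\theta)}$ (a fixed rescaling is not enough, since $\nu'(\theta)\to\infty$): substituting $t=s/\sqrt{\nu'(\theta)}$ makes the phase $\approx\frac{\abs x^2}{8}s^2$ uniformly, and after factoring out $a(i\theta)=i^n\tilde A(i\theta)\kappa^{-n}\asymp\kappa^{-n}$ — writing $a(\rho)=\tilde A(\rho)(\rho-i\pi)^{-n}$ with $\tilde A(i\pi)\ne0$ — the remaining amplitude is bounded above and below near $s=0$ uniformly in $\theta$; this last point is exactly where $\abs x\sqrt{\abs z}\ge\Lambda$ enters, guaranteeing that the Gaussian's effective range $\abs s\lesssim\abs x^{-1}$ is small compared with the distance $\kappa\sqrt{\nu'(\theta)}\asymp\kappa^{-1/2}$ from the rescaled contour to the pole. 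The tail $\abs t\gtrsim\kappa$ is harmless since $\abs{a(i\theta+t)}\le C\abs t^{-n}$ there and $\Re\psi(i\theta+t,\theta)>0$ for $t\ne0$ by an argument analogous to Lemma~\ref{zcothz}. This gives $h(x,z)\asymp a(i\theta)(\abs x\sqrt{\nu'(\theta)})^{-1}\asymp\kappa^{3/2-n}/\abs x\asymp\abs z^{n/2-3/4}/\abs x^{n-1/2}$, and because $\nu'(\theta)>0$ is real the steepest-descent direction is horizontal, so the leading contribution is a positive real multiple of $a(i\theta)\ge1$; hence $\Re h$ obeys the same two-sided bound, which equals $\abs z^{n-1}/(\abs x\sqrt{\abs z})^{n-1/2}\asymp\abs z^{n-1}/(1+(\abs x\sqrt{\abs z})^{n-1/2})$ in this regime.

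\emph{Regime $\abs x\sqrt{\abs z}\le2\Lambda$ (pole-dominated; here $\abs x^2$ is bounded).} Now I push the contour to the top edge of $D$, crossing the pole, which by Cauchy's theorem on a large rectangle with a small detour around $i\pi$ (using $\Re(\rho\coth\rho)\to+\infty$ as $\Re\rho\to\pm\infty$ uniformly on $D$, as in Lemma~\ref{integrand-bounds}; the case $\abs x=0$ is handled the same way using that $a$ decays on $\R$) gives
\begin{equation*}
  h(x,z)=2\pi i\,\Res_{\rho=i\pi}\Bigl[e^{-\frac{\abs x^2}{4}\psi(\rho,\theta)}a(\rho)\Bigr]+\int_{\R}e^{-\frac{\abs x^2}{4}\psi(\rho+\frac{3i\pi}{2},\theta)}a\bigl(\rho+\tfrac{3i\pi}{2}\bigr)\,d\rho=:R+E.
\end{equation*}
For $E$: on $\Im\rho=\frac{3\pi}{2}$ one has $\Re(\rho\coth\rho)=(\Re\rho)\tanh(\Re\rho)\ge0$, so $\Re\psi(\rho+\frac{3i\pi}{2},\theta)\ge\frac{3\pi}{2}\nu(\theta)-\frac{\theta^2}{\sin^2\theta}$, which is positive on $[\theta_0,\pi)$ and $\asymp\kappa^{-2}$; with $\kappa^2\asymp\abs x^2/\abs z$ this makes $\frac{\abs x^2}{4}\Re\psi(\rho+\frac{3i\pi}{2},\theta)\gtrsim\abs z$, so $\abs E\le Ce^{-c\abs z}\int_{\R}\abs{a(\rho+\frac{3i\pi}{2})}\,d\rho$ is exponentially small. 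For $R$: with $w=\rho-i\pi$ and $\rho\coth\rho=\frac{i\pi}{w}+g(w)$ ($g$ analytic, $g(0)=1$),
\begin{equation*}
  R=2\pi i\,\Res_{w=0}\Bigl[e^{-\frac{i\pi\abs x^2}{4w}}\,\frac{e^{-\frac{\abs x^2}{4}\bigl(\pi\nu(\theta)-\frac{\theta^2}{\sin^2\theta}-i\nu(\theta)w+g(w)\bigr)}\tilde A(i\pi+w)}{w^n}\Bigr];
\end{equation*}
since $\abs x^2$ is bounded, $e^{-i\pi\abs x^2/(4w)}$ is bounded on a fixed circle $\abs w=r_0$ and the residue is found by expanding in $w$. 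Using $\frac{\abs x^2}{4}\nu(\theta)=\abs z$, the term $-i\nu(\theta)w$ produces a factor $e^{i\abs z w}$, and a short computation identifies $R$ as a bounded nonzero scalar (bounded above and below because $\abs x^2$ and $\pi\nu(\theta)-\frac{\theta^2}{\sin^2\theta}$ are bounded near $\theta=\pi$) times $(i\abs z)^{n-1}(\abs x^2\abs z)^{-(n-1)/2}I_{n-1}(\sqrt\pi\,\abs x\sqrt{\abs z})$; for $\abs x\sqrt{\abs z}\le2\Lambda$, $I_{n-1}(\sqrt\pi\,\abs x\sqrt{\abs z})\asymp(\abs x\sqrt{\abs z})^{n-1}$, so $R\asymp\abs z^{n-1}$. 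The positivity $a(i\theta)\ge1$ forces $i^n\tilde A(i\theta)=\kappa^na(i\theta)\ge\kappa^n>0$, so $i^n\tilde A(i\pi)>0$ and $R$ has positive real part comparable to $\abs z^{n-1}$; since $\abs E$ is exponentially small in $\abs z$ and $\abs z\ge d_0$ is large, $\Re h\asymp\abs z^{n-1}\asymp\abs z^{n-1}/(1+(\abs x\sqrt{\abs z})^{n-1/2})$ here. The two regimes overlap on $\Lambda\le\abs x\sqrt{\abs z}\le2\Lambda$ and cover the stated region.

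I expect the saddle-dominated regime to be the main obstacle: the constants in a direct application of Lemma~\ref{abstract-lemma-multi} degenerate as $\theta\to\pi$, so one must rescale by $\sqrt{\nu'(\theta)}$ and then verify that the amplitude (after removing $a(i\theta)$) stays comparable to $1$ on the relevant range — which forces $\Lambda$ large and requires controlling the approach of the pole of $a$ to the contour, at distance $\kappa\asymp\abs x/\sqrt{\abs z}$. Equivalently, one could keep the single contour $\R+\frac{3i\pi}{2}$ and extract the saddle contribution from within the essential-singularity residue $R$ by deforming its contour through $w=-i\kappa$; either way, tracking how $\kappa$, $\nu'(\theta)$, $\abs x$ and $\abs z$ interlock is the delicate part. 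The estimate on $E$, and the evaluation of the residue when $\abs x^2$ is bounded, are comparatively routine.
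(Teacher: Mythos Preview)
Your split into a saddle-dominated regime (contour through $i\theta$) and a pole-dominated regime (contour past $i\pi$, residue plus exponentially small line integral) is sound and reaches the same conclusion, but the organization differs from the paper's.  The paper moves the contour to $\Im\rho=3\pi/2$ once and for all, writing $h=h_l+h_r$ with $h_l$ negligible (this is your $E$), and then analyzes the residue $h_r$ in \emph{both} regimes: for small $y:=\pi\abs{x}^2/\kappa\asymp\abs{x}\sqrt{\abs z}$ it expands $h_r$ as a power series in $y$ whose structure forces $h_r\asymp (y/\kappa)^{n-1}$ (this is the content of Lemma~\ref{g_m} and Proposition~\ref{Fprop-II}, and is equivalent to your Bessel identification---the paper's $g_k(0)$ are essentially the coefficients of $I_{n-1}$); for large $y$ it does steepest descent not on the horizontal line $\Im\rho=\theta$ but on the \emph{circle} $\abs{\rho-i\pi}=\kappa$ about the pole (Proposition~\ref{Fprop-III}).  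That circle passes through the saddle $i\theta$ and has the virtue that the rescaling by $\kappa$ is built into the parametrization, so the uniformity you flag as delicate becomes a direct application of Lemma~\ref{abstract-lemma-multi} with bounded data.  Your horizontal-line version works too, but two points need more than you wrote: (i) the tail $\abs t\gtrsim\kappa$ is not controlled by ``$\Re\psi>0$'' alone---you need the quantitative fact that $\Re\psi(i\theta+t,\theta)\asymp\pi/\kappa$ there, so that $\frac{\abs x^2}{4}\Re\psi\asymp y$ kills the $\int_\kappa^1\abs t^{-n}\,dt\asymp\kappa^{1-n}$ factor; (ii) in the pole regime, $\pi\nu(\theta)-\theta^2/\sin^2\theta\sim 2\pi/\kappa$ is \emph{not} bounded near $\theta=\pi$---what is bounded is $\abs x^2$ times it, since $\abs x^2/\kappa\asymp y\le 2\Lambda$.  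With those fixes your argument goes through; the paper's single-contour approach simply packages the bookkeeping more uniformly.
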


The proof of Theorem \ref{h-estimate} occupies the rest of this
section.  Theorem \ref{region-II-III-theorem} follows, since Theorem
\ref{h-estimate} applies to each term of (\ref{p-expansion-phi}) (note
each $a_k$ satisfies the hypotheses), and the $k=(m-1)/2$ term will
dominate for large $\abs{z}$.


An argument similar to Lemma \ref{move-contour-Rm}, using the fact
that Lemma \ref{integrand-bounds} applies for $\abs{b} \le 2\pi$, will
allow us to move the contour to the line $\Im \rho = 3\pi/2$,
accounting for the residue at $i\pi$:
\begin{equation}
      h(x,z) := \underbrace{\int_{-\infty}^\infty e^{-\frac{\abs{x}^2}{4} \psi(\rho+3i\pi/2,
      \theta)} a(\rho+3i\pi/2) \,d\rho}_{h_l(x,z)} + \underbrace{\Res(e^{-\frac{\abs{x}^2}{4} \psi(\rho,
      \theta)} a(\rho); \rho = i\pi)}_{h_r(x,z)}.
\end{equation}

The following lemma shows that $h_l(x,z)$, the integral along the
horizontal line, is negligible.

 \begin{lemma}\label{hl-estimate}
   There exists $\theta_0 < \pi$ and a constant $C > 0$ such that for
   all $(x,z)$ with $\theta(x,z) \in [\theta_0,\pi)$ we have
   \begin{equation}
     \abs{h_l(x,z)} \le C e^{-d(x,z)^2/8}.
   \end{equation}
 \end{lemma}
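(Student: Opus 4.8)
The plan is to make the shifted integrand completely explicit, read off $\Re\psi(\rho + 3i\pi/2, \theta)$, and then obtain the bound by discarding a nonnegative term and invoking the integrability of $a$ along the line $\Im\rho = 3\pi/2$. The key observation is the identity
\begin{equation*}
  \coth(\rho + 3i\pi/2) = \coth(\rho + i\pi/2) = \tanh\rho,
\end{equation*}
which follows from the $i\pi$-periodicity of $\coth$ together with $\coth(w + i\pi/2) = \tanh w$. Hence $(\rho + 3i\pi/2)\coth(\rho + 3i\pi/2) = (\rho + 3i\pi/2)\tanh\rho$, whose real part for real $\rho$ is $\rho\tanh\rho \ge 0$. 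This is exactly why the shift height $3\pi/2$ is the right one: to pass the singularity of the integrand at $i\pi$ the height must exceed $\pi$, and to keep the shifted contour inside the strip where $\rho\coth\rho$ and $a$ are analytic it must stay below $2\pi$; among such heights, $3\pi/2$ is the one that renders the $\coth$ contribution real and manifestly nonnegative on the new contour.

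Next I would assemble $\Re\psi$. Since $-i\nu(\theta)(\rho + 3i\pi/2)$ has real part $\tfrac{3\pi}{2}\nu(\theta)$, and since $\tfrac{\theta^2}{\sin^2\theta} = \theta(\nu(\theta) + \cot\theta)$ by (\ref{nu-def}), we obtain
\begin{equation*}
  \Re\psi(\rho + 3i\pi/2, \theta) = \left(\frac{3\pi}{2} - \theta\right)\nu(\theta) - \theta\cot\theta + \rho\tanh\rho.
\end{equation*}
Because $\rho\tanh\rho \ge 0$, the modulus of the exponential factor in $h_l$ is bounded, uniformly in $\rho$, by $e^{-\frac{\abs{x}^2}{4}[(\frac{3\pi}{2} - \theta)\nu(\theta) - \theta\cot\theta]}$; pulling this out of the integral and setting $C := \int_{\R} \abs{a(\rho + 3i\pi/2)}\,d\rho$ (finite by the hypothesis of Theorem \ref{h-estimate}) gives
\begin{equation*}
  \abs{h_l(x,z)} \le C\, e^{-\frac{\abs{x}^2}{4}\left[\left(\frac{3\pi}{2} - \theta\right)\nu(\theta) - \theta\cot\theta\right]}.
\end{equation*}

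It remains to check that this exponent is at least $d(x,z)^2/8$. Using $d(x,z)^2 = \abs{x}^2\,\theta^2/\sin^2\theta$ from (\ref{distance-formula}) and $\theta^2/\sin^2\theta = \theta\nu(\theta) + \theta\cot\theta$, the inequality $\tfrac{\abs{x}^2}{4}[(\tfrac{3\pi}{2} - \theta)\nu(\theta) - \theta\cot\theta] \ge \tfrac{d(x,z)^2}{8}$ collapses, after cancelling $\abs{x}^2$, to the single scalar inequality $(\pi - \theta)\nu(\theta) \ge \theta\cot\theta$. On the range that actually occurs here, $\theta \in [\pi/2, \pi)$, the left-hand side is nonnegative ($\nu \ge 0$ by Corollary \ref{nu-c} and $\pi - \theta > 0$) while the right-hand side is nonpositive ($\cot\theta \le 0$), so the inequality is immediate; one may simply take $\theta_0 = \pi/2$ (or, to respect the conventions of the surrounding sections, any value in $[\pi/2, \pi)$). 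This yields $\abs{h_l(x,z)} \le C e^{-d(x,z)^2/8}$.

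I do not expect a genuine obstacle. The only step needing outside input is the legitimacy of the shift to $\Im\rho = 3\pi/2$ — decay of the integrand as $\Re\rho \to \pm\infty$, uniformly in the strip, together with the residue bookkeeping at $i\pi$ — but that is precisely the ``argument similar to Lemma \ref{move-contour-Rm}'' already invoked just before the statement, using that Lemma \ref{integrand-bounds} controls the integrand for $\abs{b} \le 2\pi$. Once the line $\Im\rho = 3\pi/2$ is in hand, the estimate is the short computation above; the one substantive idea is the choice of shift height that makes $\rho\coth\rho$ real and nonnegative on the shifted contour.
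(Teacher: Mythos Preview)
Your proof is correct and follows essentially the same approach as the paper: use $\coth(\rho+3i\pi/2)=\tanh\rho$, compute $\Re\psi$ on the shifted line, drop the nonnegative $\rho\tanh\rho$ term, bound the remaining integral by the hypothesized integrability of $a$, and verify a scalar inequality in $\theta$. Your verification of that last inequality is actually cleaner than the paper's: rewriting $\theta^2/\sin^2\theta=\theta\nu(\theta)+\theta\cot\theta$ reduces it to $(\pi-\theta)\nu(\theta)\ge\theta\cot\theta$, which is immediate by signs on $[\pi/2,\pi)$, whereas the paper checks the equivalent statement $\nu(\theta)\ge\tfrac{1}{\pi}\theta^2/\sin^2\theta$ via a limit-and-derivative argument near $\theta=\pi$.
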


 \begin{proof}
   Observe that $\coth(\rho+3 i
 \pi/2)=\tanh \rho$.  So
 \begin{align*}
   \Re\psi(\rho+3i\pi/2,\theta) &= \rho \tanh \rho + \frac{3\pi}{2}
   \nu(\theta) - \frac{\theta^2}{\sin^2\theta}
 \end{align*}
 Therefore we have
 \begin{align*}
   \abs{h_l(x,z)} &\le e^{-\frac{\abs{x}^2}{4} \left(\frac{3\pi}{2}\nu(\theta)
   - \frac{\theta^2}{\sin^2\theta}\right)} \int_\R
   e^{-\frac{\abs{x}^2}{4} \rho \tanh \rho}
   \abs{a(\rho+3i\pi/2)} \,d\rho \\
   &\le e^{-\frac{\abs{x}^2}{4} \left(\frac{3\pi}{2}\nu(\theta)
   - \frac{\theta^2}{\sin^2\theta}\right)} \int_\R
   \abs{a(\rho+3i\pi/2)} \,d\rho
 \end{align*}
 as $\tau\tanh\tau \ge 0$.  The integral in the last line is a finite
 constant, since $a(\cdot + 3i\pi/2)$ is integrable by assumption.

 However, for $\theta$ sufficiently close to $\pi$, we have
 $\nu(\theta) \ge \frac{1}{\pi} \frac{\theta^2}{\sin^2\theta}$.  (If
 $\beta(\theta) := \nu(\theta)
 \left(\frac{\theta^2}{\sin^2\theta}\right)^{-1}$, we have
 $\lim_{\theta \uparrow \pi} \beta(\theta) = 1/\pi$ and $\lim_{\theta
   \uparrow \pi} \beta'(\theta) = -2/\pi^2 < 0$.  Indeed, $\theta >
 0.51$ suffices.)  Thus for such $\theta$ we have
 \begin{equation}
   \abs{h_l(x,z)} \le C e^{-\frac{\abs{x}^2}{8}
     \frac{\theta^2}{\sin^2\theta}} = C e^{-d(x,z)^2/8}. 
 \end{equation}
 \end{proof}

\newcommand{\radius}{r}
\newcommand{\dummy}{\rho}
\newcommand{\oldlambda}{y}
\newcommand{\circlevar}{w}
\newcommand{\indexvar}{k}

 To handle the residue term $h_r$, write it as
 \begin{equation}\label{loop-integral}
   h_r(x,z) = \oint_{\partial B(i\pi, \radius)}
   e^{-\frac{\abs{x}^2}{4}\psi(\dummy,\theta)/4}(\dummy)\,d\dummy.
 \end{equation}
 We can choose any $\radius \in (0,\pi)$ because the integrand is analytic
 on the punctured disk.  To facilitate dealing with the singularity at
 $\theta = \pi$, we adopt the parameters
 \begin{equation}\label{lambda-s-def}
   \begin{split}
     s &:= \pi - \theta(x,z) \\
     \oldlambda &:= \pi \abs{x}^2/s.
   \end{split}
 \end{equation}
 Note that 
\begin{equation}\label{z-comparisons}
  \oldlambda/s \asymp \abs{z},\quad \oldlambda \asymp
  \abs{x}\sqrt{\abs{z}}.
\end{equation}
If we let (compare (\ref{psidef}))
\begin{align}
  \phi(\circlevar, s) &:= \frac{1}{4\pi}s
    \psi(i(\pi-\circlevar),\pi-s) \nonumber \\
    &= \frac{s}{4\pi}
    \left(\nu(\pi-s)(\pi-\circlevar) + (\pi-\circlevar)\cot(\pi-\circlevar)
    - \frac{(\pi-s)^2}{\sin^2 s}\right)\label{phidef} \\
  F(\oldlambda,s) &:= s^{n-1} \oint_{\partial B(0,\radius)}
  e^{-\oldlambda \phi(\circlevar, s)}
  a(i(\pi-\circlevar))(-i)\,d\circlevar \label{Fdef}
\end{align}
we have
\begin{equation}\label{hr-F}
  h_r(x,z) = s^{-(n-1)} F(y,s).
\end{equation}
Note we have made the change of variables $\dummy = i(\pi-\circlevar)$ from
(\ref{loop-integral}) to (\ref{Fdef}).  

Observe that $F$ is analytic in $\oldlambda$ and $s$ for $s \ne k\pi$,
$k \in \Z$, so we shall now consider $\oldlambda$ and $s$ as complex
variables.  The factor of $s^{n-1}$ in $F$ was inserted to clear a
pole of order $n-1$ at $s=0$, whose presence will be apparent later.

 Computing a Laurent series for $\phi$
 about $(i\pi,\pi)$, which converges for $0 < \abs{s} < \pi$, $0 <
 \abs{\circlevar} < \pi$, we find
 \begin{equation}\label{phi-series}
   \phi(\circlevar, s) = \frac{1}{2} - \frac{\circlevar}{4s} -
   \frac{s}{4\circlevar} - sU(\circlevar,s)
 \end{equation}
 with $U$ analytic for $\abs{s} < \pi$, $\abs{\circlevar} < \pi$.
 Also, by the hypotheses on $a$,
 \begin{equation}\label{a-series}
   a(i(\pi-\circlevar)) = \circlevar^{-n} V(\circlevar)
 \end{equation}
 where $V$ is analytic for $\abs{\circlevar} < \pi/2$ and
 $V(0)> 0$.  Thus we have
 \begin{equation}\label{FUV}
   F(\oldlambda,s) = s^{n-1} \oint_{\partial B(0,\radius)} e^{-\oldlambda\left(\frac{1}{2}- \frac{\circlevar}{4s} -
   \frac{s}{4\circlevar} - sU(\circlevar,s)\right)} \circlevar^{-n} V(\circlevar) (-i)\,d\circlevar
 \end{equation}

 The constant term in the expansion of $\psi$ is slightly inconvenient, so let
 $G(\oldlambda,s) = e^{\oldlambda/2} F(\oldlambda,s)$.  Then:
 \begin{align}
   G(\oldlambda,s) &= s^{n-1} \oint_{\partial B(0,\radius)}
     e^{\oldlambda\left(\frac{\circlevar}{4s} + \frac{s}{4\circlevar} + sU(\circlevar,s)\right)}
     \circlevar^{-n} V(\circlevar) (-i) \,d\circlevar \nonumber \\
   &= s^{n-1} \oint \sum_{\indexvar=0}^{\infty}
   \frac{\oldlambda^\indexvar}{\indexvar!} \left(\frac{\circlevar}{4s} +
   \frac{s}{4\circlevar} + sU(\circlevar,s)\right)^\indexvar \circlevar^{-n} V(\circlevar)\,d\circlevar (-i) \label{fubini-used} \\
   &= s^{n-1} \sum_{\indexvar=0}^{\infty}
   \frac{\oldlambda^\indexvar}{\indexvar!} \oint  \left(\frac{\circlevar}{4s} +
   \frac{s}{4\circlevar} + sU(\circlevar,s)\right)^\indexvar \circlevar^{-n} V(\circlevar) (-i) \,d\circlevar \nonumber\\ 
   &=: \sum_{\indexvar=0} \frac{\oldlambda^\indexvar g_\indexvar(s)}{\indexvar!} \label{Gsum}
 \end{align}
 where we let 
 \begin{equation}\label{g_m-def}
   g_\indexvar(s) := s^{n-1} \oint  \left(\frac{\circlevar}{4s} +
   \frac{s}{4\circlevar} + sU(\circlevar,s)\right)^\indexvar \circlevar^{-n} V(\circlevar) (-i) \,d\circlevar.
 \end{equation}
 The interchange of sum and integral in (\ref{fubini-used}) is
 justified by Fubini's theorem, since for fixed $s$ $U(s,\cdot)$ and
 $V$ are bounded on $B(0,\radius)$, and thus
 \begin{align*}
   &\quad \sum_{\indexvar=0}^{\infty} \oint_{B(0, \radius)}
   \abs{\frac{\oldlambda^\indexvar}{\indexvar!} \left(\frac{\circlevar}{4s} +
     \frac{s}{4\circlevar} + sU(\circlevar,s)\right)^\indexvar \left(\frac{\pi}{\circlevar} +
     V(\circlevar)\right)^n}\,d\circlevar \\
    &\le \sum_{\indexvar=0}^{\infty} \frac{\abs{\oldlambda}^\indexvar}{\indexvar!} 2 \pi \radius \left(\frac{\radius}{4\abs{s}} +
    \frac{\abs{s}}{4\radius} + \abs{s} \sup_{\abs{\circlevar}=\radius} \abs{U(\circlevar,s)}\right)^\indexvar \left(\frac{\pi}{\radius} +
    \sup_{\abs{\circlevar}=\radius} \abs{V(\circlevar)} \right)^n \\
    &= 2 \pi \radius  \left(\frac{\pi}{\radius} +
    \sup_{\abs{\circlevar}=\radius} \abs{V(\circlevar)}\right)^n \exp\left(\abs{\oldlambda} \left(\frac{\radius}{4\abs{s}} +
    \frac{\abs{s}}{4\radius} + \abs{s}\sup_{\abs{\circlevar}=\radius} \abs{U(\circlevar,s)}\right)\right) < \infty.
 \end{align*}

 We now examine more carefully the terms $g_\indexvar$ in (\ref{Gsum}--\ref{g_m-def}).
 \begin{lemma}\label{g_m}
   If $g_\indexvar$ is defined by (\ref{g_m-def}), then:
   \begin{enumerate}
     \item \label{g_m-analytic} $g_\indexvar$ is analytic for $\abs{s} \le s_0$;
     \item \label{g_m-size} There exists $C = C(s_0) \ge 0$ independent of $\indexvar$
     such that $\abs{g_\indexvar(s)} \le C^\indexvar$ for each $\indexvar$ and all $\abs{s} \le
     s_0$;
     \item \label{g_m-zero-low} For $\indexvar \le n-1$, $g_\indexvar(s) = s^{n-1-\indexvar}
     h_\indexvar(s)$, where $h_\indexvar$ is analytic for $\abs{s} \le s_0$.  In
     particular, $g_\indexvar(0)=0$ for $\indexvar < n-1$.
     \item \label{g_m-zero-high} For $\indexvar \ge n-1$, $g_\indexvar(0) > 0$ when
       $\indexvar+n$ is odd, and $g_\indexvar(0)=0$ when $\indexvar+n$ is even.
   \end{enumerate}
 \end{lemma}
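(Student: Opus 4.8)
The plan is to analyze the integrand of $g_k$ in (\ref{g_m-def}) through the multinomial expansion of $\bigl(\tfrac{w}{4s}+\tfrac{s}{4w}+sU(w,s)\bigr)^k$, read off with the residue theorem exactly which monomials $w^{a}s^{b}$ survive the contour integral, and then harvest parts~\ref{g_m-zero-low} and \ref{g_m-zero-high} as bookkeeping; analyticity (part~\ref{g_m-analytic}) and the uniform bound (part~\ref{g_m-size}) will both come out of a contour-deformation estimate.

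For parts~\ref{g_m-analytic} and \ref{g_m-size}, I would first note that on any fixed circle $\partial B(0,\radius)$ with $\radius < \pi/2$ the integrand in (\ref{g_m-def}) is jointly continuous in $(w,s)$ and analytic in $s$ for $0<\abs{s}<\pi$, so differentiation under the integral sign (or Morera) shows $g_k$ is analytic on the punctured disk $0<\abs{s}<\pi$. The point where care is needed is the behaviour at $s=0$: estimating on a fixed circle produces a factor $\bigl(\radius/(4\abs{s})+\cdots\bigr)^k$ that blows up as $s\to 0$. The remedy — the one genuinely non-routine idea — is that since $U$ is analytic on $\abs{w}<\pi$ and $V$ (equivalently $w\mapsto a(i(\pi-w))$) on $\abs{w}<\pi/2$, the integrand of (\ref{g_m-def}) is analytic in $w$ on the punctured disk $0<\abs{w}<\pi/2$, so by Cauchy's theorem the contour may be collapsed to $\partial B(0,\abs{s})$ whenever $0<\abs{s}<\pi/2$. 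On that circle $\abs{w/(4s)}=\abs{s/(4w)}=\tfrac14$ and $\abs{sU(w,s)}\le s_0\sup\abs{U}$, while $\abs{s^{n-1}w^{-n}}$ times the arc length $2\pi\abs{s}$ is exactly $2\pi$; hence $\abs{g_k(s)}\le 2\pi\bigl(\sup_{\abs{w}\le s_0}\abs{V}\bigr)\bigl(\tfrac12+s_0\sup\abs{U}\bigr)^k$ for $0<\abs{s}\le s_0<\pi/2$. Boundedness near $0$ makes the singularity removable, giving part~\ref{g_m-analytic}, and the estimate has the claimed form $C^k$ with $C=C(s_0)$ independent of $k$ (after absorbing the overall constant), giving part~\ref{g_m-size}.

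For parts~\ref{g_m-zero-low} and \ref{g_m-zero-high}, the next step is to write $\bigl(\tfrac{w}{4s}+\tfrac{s}{4w}+sU\bigr)^k=\tfrac{w^k}{4^k s^k}\bigl(1+\tfrac{s^2}{w^2}+\tfrac{4s^2U}{w}\bigr)^k$, expand the last factor by the multinomial theorem indexed by $(a,b,c)$ with $a+b+c=k$, multiply by $s^{n-1}w^{-n}V(w)(-i)$, and expand $U(w,s)^{c}V(w)=\sum_{p,q\ge 0}\gamma^{(c)}_{p,q}w^{p}s^{q}$. The residue theorem annihilates every monomial except those whose net power of $w$ is $-1$, i.e. those with $p=(n-1)-k+2b+c\ge 0$; such a term contributes a monomial $s^{\,p+c+q}$ to $g_k(s)$. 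This alone shows $g_k$ is a power series in nonnegative powers of $s$, reconfirming part~\ref{g_m-analytic}. When $k\le n-1$, the exponent $p+c+q$ attains its minimum value $n-1-k$ exactly at $b=c=q=0$, so $g_k(s)=s^{\,n-1-k}h_k(s)$ with $h_k$ analytic on $\abs{s}\le s_0$ and $h_k(0)=\tfrac{2\pi}{4^k}$ times the coefficient of $w^{\,n-1-k}$ in $V$; in particular $g_k(0)=0$ for $k<n-1$, which is part~\ref{g_m-zero-low}. When $k\ge n-1$, $g_k(0)$ is the coefficient of $s^0$, which forces $p=c=q=0$ and $2b=k-(n-1)$; a nonnegative integer $b$ with $2b=k-(n-1)$ exists precisely when $k+n$ is odd, in which case the single surviving term gives $g_k(0)=\tfrac{2\pi}{4^k}\binom{k}{b}V(0)>0$ (with $b=(k-n+1)/2$, using $V(0)>0$), whereas for $k+n$ even no term survives and $g_k(0)=0$. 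This is part~\ref{g_m-zero-high}.

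I expect the only real obstacle to be the uniform-in-$k$ bound of part~\ref{g_m-size}: one must resist estimating on a fixed contour and instead exploit analyticity in $w$ to shrink the contour to radius $\sim\abs{s}$. Everything else is multinomial bookkeeping together with the residue theorem, with the sign assertion in part~\ref{g_m-zero-high} hinging on the parity constraint $2b=k-(n-1)$ and on the positivity $V(0)>0$ recorded when $V$ was introduced in (\ref{a-series}).
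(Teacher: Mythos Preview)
Your proof is correct, and for parts~\ref{g_m-zero-low} and \ref{g_m-zero-high} it is essentially the paper's argument in different clothing: the paper expands $\bigl(\tfrac{w}{4s}+\tfrac{s}{4w}+sU\bigr)^k$ directly by the multinomial theorem with indices $(a,b,c)$ (corresponding to the three summands), whereas you first factor out $\tfrac{w^k}{4^k s^k}$ and then expand; the resulting bookkeeping of which monomials survive the residue and what power of $s$ they carry is the same, and both arrive at the parity constraint and the value $\tfrac{2\pi}{4^k}\binom{k}{(k-n+1)/2}V(0)$.

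Where you genuinely diverge from the paper is in parts~\ref{g_m-analytic} and \ref{g_m-size}. The paper proves the uniform bound $\abs{g_k(s)}\le C^k$ by first performing the multinomial expansion, discarding the terms whose contour integral vanishes (those with $a-b-n\ge 0$), and then bounding the remaining sum term by term on a \emph{fixed} circle $\abs{w}=r$; the dangerous factor $s^{-a}$ is controlled because the surviving terms have $-(a-b-n)-1\ge 0$, so $\abs{s}$ can be replaced by $s_0$. Your contour-shrinking idea---deforming $\partial B(0,r)$ to $\partial B(0,\abs{s})$ so that $\abs{w/(4s)}=\abs{s/(4w)}=\tfrac14$ and $\abs{s^{n-1}w^{-n}}\cdot 2\pi\abs{s}=2\pi$---sidesteps the multinomial expansion entirely and gives the bound in one line, after which Riemann's removable singularity theorem handles $s=0$. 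This is more direct and arguably more conceptual; the paper's route has the compensating virtue that the same expansion does double duty for parts~\ref{g_m-zero-low} and \ref{g_m-zero-high}, whereas you need to set up the multinomial expansion separately for those. Either way, the substance is the same and both arguments are sound.
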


 \begin{proof}
   By the multinomial theorem,
   \begin{align}
     g_\indexvar(s) &= \sum_{a+b+c=\indexvar} \binom{\indexvar}{a,b,c} s^{n-1} \oint_{\partial B(0,\radius)}
     \left(\frac{\circlevar}{4s}\right)^a 
     \left(\frac{s}{4\circlevar}\right)^b 
     \left(sU(\circlevar,s)\right)^c \circlevar^{-n} V(\circlevar) (-i)\,d\circlevar
     \\
     &= \sum_{a+b+c=\indexvar} \binom{\indexvar}{a,b,c} 4^{-(a+b)} \oint_{\partial B(0,\radius)}
     \circlevar^{a-b-n} s^{-(a-b-n)-1} \left(sU(\circlevar,s)\right)^c V(\circlevar) (-i) \,d\circlevar
     \\
     &= \sum_{\substack{a+b+c=\indexvar\\a-b-n \le -1}} \binom{\indexvar}{a,b,c} 4^{-(a+b)} \oint_{\partial B(0,\radius)}
     \circlevar^{a-b-n} s^{-(a-b-n)-1} \left(sU(\circlevar,s)\right)^c V(\circlevar) (-i) \,d\circlevar \label{abc}
   \end{align}
   since for terms with $a-b-n \ge 0$, the integrand is analytic in
   $\circlevar$ and the integral vanishes.  Now the integrand of each term of
   (\ref{abc}) is clearly analytic in $s$, hence so is $g_\indexvar$ itself,
   establishing item \ref{g_m-analytic}.  

   For item \ref{g_m-size}, let $U_0 := \sup_{\abs{\circlevar}=\radius, \abs{s}
     \le s_0} \abs{U(\circlevar,s)}$, and $V_0 := \sup_{\abs{\circlevar}=\radius}
   \abs{V(\circlevar)}$.  Then for $\abs{s} \le s_0$,
   \begin{align*}
     \abs{g_\indexvar(s)} &\le \sum_{\substack{a+b+c=\indexvar\\a-b-n \le -1}} \binom{\indexvar}{a,b,c}
     4^{-(a+b)} (2 \pi \radius) 
     \radius^{a-b-n} s_0^{-(a-b-n)-1} \left(s_0 U_0\right)^c V_0
     \\
     &\le 2 \pi \radius V_0 \frac{s_0^{n-1}}{\radius^n} \sum_{\substack{a+b+c=\indexvar\\a-b-n \le -1}}
     \binom{\indexvar}{a,b,c} \left( \frac{\radius}{4 s_0}\right)^a
     \left(\frac{s_0}{4 \radius}\right)^b \left(s_0 U_0\right)^c
     \\
     &\le 2 \pi \radius V_0 \frac{s_0^{n-1}}{\radius^n} \sum_{a+b+c=\indexvar}
     \binom{\indexvar}{a,b,c} \left( \frac{\radius}{4 s_0}\right)^a
     \left(\frac{s_0}{4 \radius}\right)^b \left(s_0 U_0\right)^c
     \\
     &\le 2 \pi \radius V_0 \frac{s_0^{n-1}}{\radius^n} \left(\frac{\radius}{4
       s_0} + \frac{s_0}{4 \radius} + s_0 U_0\right)^\indexvar
   \end{align*}
   so that a constant $C$ can be chosen with $g_\indexvar(s) \le C^\indexvar$,
   establishing item \ref{g_m-size}.

   For item \ref{g_m-zero-low}, suppose $\indexvar \le n-1$ and let $h_\indexvar(s) =
   s^{\indexvar-n+1} g_\indexvar(s)$, so that
   \begin{align*}
     h_\indexvar(s) = \sum_{\substack{a+b+c=\indexvar\\a-b-n \le -1}} \binom{\indexvar}{a,b,c} 4^{-(a+b)} \oint_{\partial B(0,\radius)}
     \circlevar^{a-b-n} s^{-(a-b-\indexvar)} \left(sU(\circlevar,s)\right)^c V(\circlevar) (-i) \,d\circlevar
   \end{align*}
   But $a-b-\indexvar \le a-\indexvar \le 0$ since $a \le \indexvar$ by definition, so only
   positive powers of $s$ appear, and $h_\indexvar$ is analytic in $s$.

   For item \ref{g_m-zero-high}, we see that when $s=0$, each term of
   (\ref{abc}) will vanish unless $c=0$ and $a-b-n=-1$, i.e. $a+b=\indexvar$
   and $a-b=n-1$.  If $\indexvar$ and $n$ have the same parity, this happens
   for no term, so $g_\indexvar(0)=0$.  If $\indexvar$ and $n$ have opposite parity,
   this forces $a=(\indexvar+n-1)/2$, $b=(\indexvar-n+1)/2$, both of which are
   nonnegative integers.  In this case
   \begin{align*}
     g_\indexvar(s) &= \binom{\indexvar}{(\indexvar+n-1)/2}
     4^{-\indexvar} \oint \circlevar^{-1} V(\circlevar)
     (-i)\,d\circlevar \\
     &= \binom{\indexvar}{(\indexvar+n-1)/2}
     4^{-\indexvar} 2 \pi V(0) > 0
   \end{align*}
   since $V(0)>0$.
\end{proof}

 From this we derive corresponding properties of the function $F$.

 \begin{corollary}\label{Fstructure}
   Let $F(\oldlambda,s)$ be defined as in (\ref{Fdef}).  Then for all $s_0
   < \pi$:
   \begin{enumerate}
     \item $F$ is analytic for all $\oldlambda$ and all $0 \le s \le s_0$.
     \item We may write
       \begin{equation}
         F(\oldlambda,s) = e^{-\oldlambda/2} \left[ \sum_{\indexvar=0}^{n-1} \frac{\oldlambda^{\indexvar}
           s^{n-1-\indexvar}}{\indexvar!} h_\indexvar(s) + \oldlambda^n H(\oldlambda,s) \right]
       \end{equation}
       with $h_\indexvar, H$ analytic for all $\oldlambda$ and all $0 \le s \le s_0$.
       Furthermore, $h_{n-1}(0) > 0$
     \item $F(\oldlambda, 0) > 0$ for all $\oldlambda > 0$.
   \end{enumerate}
 \end{corollary}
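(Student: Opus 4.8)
The plan is to transfer all three conclusions from the corresponding facts about the coefficient functions $g_\indexvar$ established in Lemma \ref{g_m}, working through the series representation
\begin{equation*}
  G(\oldlambda,s) = e^{\oldlambda/2} F(\oldlambda,s) = \sum_{\indexvar=0}^\infty \frac{\oldlambda^\indexvar g_\indexvar(s)}{\indexvar!}
\end{equation*}
from (\ref{Gsum}). For conclusion 1, the key input is the geometric bound $\abs{g_\indexvar(s)} \le C^\indexvar$ from item \ref{g_m-size}, valid uniformly for $\abs{s} \le s_0$: it dominates the series termwise by $\sum_\indexvar (C\abs{\oldlambda})^\indexvar/\indexvar! = e^{C\abs{\oldlambda}}$, so the sum converges absolutely and locally uniformly on $\{\oldlambda \in \C\} \times \{\abs{s} \le s_0\}$. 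Since each $g_\indexvar$ is analytic there (item \ref{g_m-analytic}), $G$ is analytic, and hence so is $F = e^{-\oldlambda/2} G$, which gives conclusion 1. The same domination argument will be reused for each (sub)series appearing below.

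For conclusion 2, I split $G(\oldlambda,s) = \sum_{\indexvar=0}^{n-1} \frac{\oldlambda^\indexvar g_\indexvar(s)}{\indexvar!} + \oldlambda^n \sum_{j=0}^\infty \frac{\oldlambda^j g_{j+n}(s)}{(j+n)!}$. For the first (finite) sum, item \ref{g_m-zero-low} lets me write $g_\indexvar(s) = s^{n-1-\indexvar} h_\indexvar(s)$ with $h_\indexvar$ analytic for $\abs{s}\le s_0$, which produces exactly the terms $\frac{\oldlambda^\indexvar s^{n-1-\indexvar}}{\indexvar!} h_\indexvar(s)$ in the claimed formula. For the tail, set $H(\oldlambda,s) := \sum_{j=0}^\infty \frac{\oldlambda^j g_{j+n}(s)}{(j+n)!}$; this is analytic in $\oldlambda$ and $s$ by the domination estimate applied to $\abs{g_{j+n}(s)} \le C^{j+n}$. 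Multiplying back through by $e^{-\oldlambda/2}$ yields the displayed decomposition. Finally $h_{n-1}(s) = g_{n-1}(s)$, the exponent $s^{n-1-(n-1)}$ being trivial, and since $(n-1)+n = 2n-1$ is odd, item \ref{g_m-zero-high} gives $h_{n-1}(0) = g_{n-1}(0) > 0$.

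For conclusion 3, evaluate the series at $s = 0$: $F(\oldlambda,0) = e^{-\oldlambda/2} \sum_\indexvar \frac{\oldlambda^\indexvar g_\indexvar(0)}{\indexvar!}$. Items \ref{g_m-zero-low} and \ref{g_m-zero-high} together show $g_\indexvar(0) \ge 0$ for every $\indexvar$ --- it vanishes for $\indexvar < n-1$ and whenever $\indexvar+n$ is even, and is strictly positive when $\indexvar \ge n-1$ with $\indexvar+n$ odd --- and in particular $g_{n-1}(0) > 0$. Hence for $\oldlambda > 0$ the series is a sum of nonnegative terms containing the strictly positive term at $\indexvar = n-1$, so $G(\oldlambda,0) > 0$ and thus $F(\oldlambda,0) = e^{-\oldlambda/2} G(\oldlambda,0) > 0$.

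I do not expect a genuine obstacle: the real content has been front-loaded into Lemma \ref{g_m}, and what remains is bookkeeping together with the one routine analytic point --- using the geometric bound on $\abs{g_\indexvar}$ to justify term-by-term manipulation and the analyticity of each (sub)series. The only spot calling for a moment's care is verifying in conclusion 2 that peeling the factor $\oldlambda^n$ off the tail keeps $H$ analytic at $\oldlambda = 0$, but this is immediate since $H$ is itself a convergent power series in $\oldlambda$ with analytic coefficients.
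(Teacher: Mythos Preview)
Your proposal is correct and follows essentially the same approach as the paper: work with $G = e^{\oldlambda/2}F$, use the bound $\abs{g_\indexvar(s)} \le C^\indexvar$ from Lemma \ref{g_m} to get analyticity via uniform convergence, split the series at $\indexvar = n$ and invoke item \ref{g_m-zero-low} for the decomposition, and use items \ref{g_m-zero-low}--\ref{g_m-zero-high} for the positivity statements. Your write-up is in fact slightly more detailed than the paper's (you spell out the parity check $(n-1)+n$ odd and the domination by $e^{C\abs{\oldlambda}}$), but the argument is the same.
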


 \begin{proof}
 We prove the corresponding facts about $G=e^{\oldlambda/2}F$.  By items
 \ref{g_m-analytic} and \ref{g_m-size} of Lemma \ref{g_m}, we have
 that $G$ is analytic for $\abs{s} \le s_0$ and all $\oldlambda$, since
 the sum in (\ref{Gsum}) is a sum of analytic functions and converges
 uniformly.  By item \ref{g_m-zero-low} we have that
 \begin{equation*}
   G(\oldlambda, s) = \sum_{\indexvar=0}^{n-1} \frac{\oldlambda^\indexvar s^{n-1-\indexvar}}{\indexvar!}
   h_\indexvar(s) + \oldlambda^n \sum_{\indexvar=0}^\infty \frac{\oldlambda^\indexvar}{(n+\indexvar)!} g_{n+\indexvar}(s).
 \end{equation*}
 And by items \ref{g_m-zero-low} and \ref{g_m-zero-high}, $G(\oldlambda,0)
 = \sum_{\indexvar=n-1}^\infty \frac{\oldlambda^\indexvar g_\indexvar(0)}{\indexvar!} > 0$ for all
 $\oldlambda > 0$.
 \end{proof}

 \begin{proposition}\label{Fprop-II}
   For all $\oldlambda_1 > 0$, there exist $\delta > 0$, and $0 <
   C_1' \le C_2' < \infty$ such that
   \begin{equation}\label{Fprop-II-eqn}
     C_1' \oldlambda^{n-1} \le \Re F(\oldlambda,s) \le \abs{F(\oldlambda,s)} \le
     C_2' \oldlambda^{n-1}
   \end{equation}
   for all $0 \le \oldlambda < \oldlambda_1$, $0 \le s < \delta
   \oldlambda$.  (Here we are treating $\oldlambda$ and $s$ as real variables.)
 \end{proposition}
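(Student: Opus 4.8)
The plan is to extract the factor $\oldlambda^{n-1}$ already predicted by Corollary~\ref{Fstructure}, and then reduce everything to a compactness argument on a bounded box — once the apparent singularity along $\oldlambda = 0$ has been cleared by passing to the variable $t := s/\oldlambda$.

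First I would fix $s_0 < \pi$ and rewrite the structural formula of Corollary~\ref{Fstructure} as
\begin{equation*}
  F(\oldlambda,s) = \oldlambda^{n-1} e^{-\oldlambda/2}\left[ \sum_{k=0}^{n-1} \frac{1}{k!}\left(\frac{s}{\oldlambda}\right)^{n-1-k} h_k(s) + \oldlambda H(\oldlambda,s) \right].
\end{equation*}
Setting $t := s/\oldlambda$ and substituting $s = t\oldlambda$, the bracketed quantity becomes
\begin{equation*}
  \widetilde\Phi(\oldlambda, t) := \sum_{k=0}^{n-1} \frac{1}{k!}\, t^{\,n-1-k} h_k(t\oldlambda) + \oldlambda\, H(\oldlambda, t\oldlambda),
\end{equation*}
which, since $h_k$ and $H$ are analytic for $0 \le s \le s_0$, is continuous on the compact box $K_\delta := [0,\oldlambda_1]\times[0,\delta]$ as soon as $\delta\oldlambda_1 \le s_0$. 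The hypothesis $0 \le s < \delta\oldlambda$ is, for $\oldlambda > 0$, exactly $t \in [0,\delta)$, and on this region $F(\oldlambda,s) = \oldlambda^{n-1} e^{-\oldlambda/2}\,\widetilde\Phi(\oldlambda,t)$ with $(\oldlambda,t) \in K_\delta$.

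Next I would examine $\widetilde\Phi$ on the edge $t = 0$: only the $k=n-1$ term of the sum survives, so $\widetilde\Phi(\oldlambda,0) = \frac{h_{n-1}(0)}{(n-1)!} + \oldlambda H(\oldlambda,0) = \oldlambda^{-(n-1)} e^{\oldlambda/2} F(\oldlambda,0)$. By Corollary~\ref{Fstructure} this is real and strictly positive for $\oldlambda > 0$, and it equals $h_{n-1}(0)/(n-1)! > 0$ at $\oldlambda = 0$; being continuous on the compact interval $[0,\oldlambda_1]$, it is bounded below by some $c_0 > 0$. For the upper bound, $|\widetilde\Phi|$ is continuous, hence bounded by some $C_2''$ on $K_{\delta_0}$ with $\delta_0 := s_0/\oldlambda_1$, and since $e^{-\oldlambda/2}\le 1$ this gives $|F(\oldlambda,s)| \le C_2''\,\oldlambda^{n-1}$. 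For the lower bound, $\Re\widetilde\Phi$ is uniformly continuous on $K_{\delta_0}$ and equals $\widetilde\Phi(\oldlambda,0)\ge c_0$ on $\{t=0\}$; choosing $\delta\in(0,\delta_0]$ so small that $|\Re\widetilde\Phi(\oldlambda,t)-\Re\widetilde\Phi(\oldlambda,0)|<c_0/2$ whenever $t<\delta$ yields $\Re\widetilde\Phi(\oldlambda,t)\ge c_0/2$ on $K_\delta$, hence $\Re F(\oldlambda,s)\ge e^{-\oldlambda_1/2}(c_0/2)\,\oldlambda^{n-1}$. Taking $C_1' := e^{-\oldlambda_1/2}c_0/2$, $C_2' := C_2''$, and noting $\Re F \le |F|$ trivially, proves \eqref{Fprop-II-eqn}.

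The one genuine subtlety — and the step I expect to be the main obstacle — is the limit $\oldlambda \to 0$ with $s/\oldlambda$ bounded: the individual monomials $\oldlambda^k s^{n-1-k}/k!$ are harmless, but to obtain \emph{uniform} constants one must not work directly on $\{0\le s\le\delta\oldlambda\le\text{const}\}$, which is not compact, but change to $t=s/\oldlambda$ so that $\widetilde\Phi$ lives on the compact box $K_\delta$. One also has to use that $\widetilde\Phi(\oldlambda,0)$ is genuinely \emph{real} (via $F(\oldlambda,0)>0$ from Corollary~\ref{Fstructure}), so that smallness of $t$ controls $\Re\widetilde\Phi$ from below, not merely $|\widetilde\Phi|$. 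Everything else is routine bookkeeping with continuous functions on compact sets.
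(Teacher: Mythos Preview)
Your proof is correct and relies on the same ingredients as the paper's: the decomposition from Corollary~\ref{Fstructure}, the fact that $h_{n-1}(0)>0$, and the positivity $F(\oldlambda,0)>0$.  The execution, however, is a bit cleaner than the paper's.  The paper argues in two separate cases: for $\oldlambda$ small it explicitly bounds each $\abs{h_k}, \abs{H} \le K$ and shows that when $s \le \delta\oldlambda$ the $k=n-1$ term dominates the rest, producing a $\oldlambda_0$ below which the estimate holds; it then handles the remaining range $\oldlambda_0 \le \oldlambda \le \oldlambda_1$ by a direct continuity argument using $F(\oldlambda,0)>0$.  Your change of variables $t=s/\oldlambda$ absorbs both cases into a single uniform-continuity argument on the compact box $K_\delta$, which is tidier and avoids introducing the intermediate threshold $\oldlambda_0$.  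The substance is the same; your packaging is more economical.
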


 \begin{proof}
    Let $K$ be a positive constant so large that $\abs{h_\indexvar(s)}
    \le K$ and $\abs{H(\oldlambda,s)} \le K$ for all $0 \le \oldlambda
    < \oldlambda_1$, $0 \le s < \oldlambda_1$, $\indexvar \le n-1$.
    For any $\delta < 1$ and all $s \le \delta \oldlambda < \oldlambda_1$, we
    have
   \begin{align*}
     \Re G(\oldlambda, s) &= \frac{\oldlambda^{n-1}}{(n-1)!} \Re
     h_{n-1}(s) + 
            \sum_{\indexvar=0}^{n-2} \frac{\oldlambda^{\indexvar} s^{n-1-\indexvar}}{\indexvar!}
           \Re {h_\indexvar(s)} + \oldlambda^n \Re H(\oldlambda,s)
           \\ &\ge \frac{\oldlambda^{n-1}}{(n-1)!} \Re h_{n-1}(s) -
           \sum_{\indexvar=0}^{n-2} \frac{\oldlambda^{n-1} \delta^{n-1-\indexvar} K}{\indexvar!}
           - \oldlambda^n K \\
           &= \oldlambda^{n-1} \left[ \frac{\Re h_{n-1}(s)}{(n-1)!} - K \sum_{\indexvar=0}^{n-2} \frac{\delta^{n-1-\indexvar}}{\indexvar!}
           \right] - \oldlambda^n K.
   \end{align*}
   Since $h_{n-1}(0) > 0$, we may now choose $\delta$ so small that the bracketed term is positive
   for all $0 \le s \le \delta \oldlambda_1$.  Then there exists $\oldlambda_0 >
   0$ so small that for all $0 \le \oldlambda \le \oldlambda_0$, we have $\Re
   F(\oldlambda,s) \ge e^{-\oldlambda_0/2} \Re G(\oldlambda,s) \ge C_1'
   \oldlambda^{n-1}$ for some $C_1' > 0$.  On the other hand,
   \begin{align*}
     \abs{F(\oldlambda, s)} &\le \abs{G(\oldlambda,s)} \\
     &\le \sum_{\indexvar=0}^{n-1} \frac{\oldlambda^{\indexvar} s^{n-1-\indexvar}}{\indexvar!}
           \abs{h_\indexvar(s)} + \oldlambda^n \Re H(\oldlambda,s) \\
           &\le \oldlambda^{n-1} \sum_{\indexvar=0}^{n-1} \frac{K \delta^{n-1-\indexvar}}{\indexvar!}
            + \oldlambda^n K.
   \end{align*}
   Again, for small $\oldlambda$ (take $\oldlambda_0$ smaller if necessary),
   we have $\abs{F(\oldlambda,s)} \le C_2' \oldlambda^{n-1}$.

   It remains to handle $\oldlambda_0 \le \oldlambda \le \oldlambda_1$.  But
   this presents no difficulty; as $F(\oldlambda,0) > 0$ for all $\oldlambda
   > 0$, and $F$ is continuous, there exists $\delta$ so small that
   \begin{equation*}
     \inf_{\oldlambda_0 \le \oldlambda \le \oldlambda_1, 0 \le s \le \delta
     \oldlambda_1} \Re F(\oldlambda, s) > 0.
   \end{equation*}
   This completes the proof.
 \end{proof}

 \begin{proposition}\label{Fprop-III}
   There exists $\oldlambda_1 > 0$, $s_0 > 0$ and constants $C_1, C_2
   > 0$ such that
   \begin{equation}\label{Fprop-III-eqn}
     \frac{C_1}{\sqrt{\oldlambda}} \le \Re F(\oldlambda,s) \le
     \abs{F(\oldlambda,s)} \le \frac{C_2}{\sqrt{\oldlambda}}
   \end{equation}
   for all $\oldlambda > \oldlambda_1$, $0 < s < s_0$.
 \end{proposition}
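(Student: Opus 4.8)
The plan is to evaluate the loop integral (\ref{Fdef}) defining $F$ by a one‑dimensional Laplace argument, taking the contour $\partial B(0,\radius)$ to be the circle of radius $\radius = s$ itself (admissible since $0 < s < \pi$). Substituting $\circlevar = s e^{i\psi}$, $\psi \in [-\pi,\pi]$, and using $a(i(\pi-\circlevar)) = \circlevar^{-n}V(\circlevar)$ from (\ref{a-series}), the powers of $s$ telescope --- $s^{n-1}\cdot s^{-n}\cdot s\,d\psi = d\psi$ --- leaving the clean expression
\begin{equation}\label{F-circle}
  F(\oldlambda,s) = \int_{-\pi}^{\pi} e^{-\oldlambda\,\phi(s e^{i\psi},\,s)}\, e^{-i(n-1)\psi}\, V(s e^{i\psi})\,d\psi,
\end{equation}
which is exactly the shape handled by Lemma \ref{abstract-lemma-multi}, with that lemma's ambient dimension taken to be $1$ and its parameter $\abs{x}^2$ taken to be $\oldlambda$.

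The heart of the matter is the behaviour of $\phi$ on the circle $\abs{\circlevar}=s$. Because $\frac{\circlevar}{4s}+\frac{s}{4\circlevar} = \frac12\cos\psi$ when $\circlevar = s e^{i\psi}$, the expansion (\ref{phi-series}) gives $\phi(s e^{i\psi},s) = \sin^2(\psi/2) - s\,U(s e^{i\psi},s)$; this is precisely why the radius $s$ is the natural one. A direct computation with $\frac{d}{d\circlevar}(\circlevar\coth\circlevar)=i\nu(-i\circlevar)$ and the definition of $\nu$ shows that $\phi(s,s)=0$ and $\partial_\circlevar\phi(s,s)=0$ --- i.e.\ $\circlevar=s$ is the saddle point of $\phi(\cdot,s)$ --- so $U(\cdot,s)$ has a zero of order at least two at $\circlevar=s$, uniformly in $s$. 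Writing $U(\circlevar,s)=(\circlevar-s)^2\widetilde U(\circlevar,s)$ with $\widetilde U$ analytic and using $(s e^{i\psi}-s)^2 = -4 s^2\sin^2(\psi/2)e^{i\psi}$, we obtain
\begin{equation}\label{phi-factored}
  \phi(s e^{i\psi},s) = \sin^2(\psi/2)\,\bigl[\,1 + 4 s^3 e^{i\psi}\widetilde U(s e^{i\psi},s)\,\bigr].
\end{equation}
Choosing $s_0$ small enough that $4 s_0^3\sup\abs{\widetilde U}<\frac12$ on the relevant compact set forces the bracket to have real part in $[\frac12,\frac32]$, so $\frac12\sin^2(\psi/2)\le\Re\phi(se^{i\psi},s)\le\frac32\sin^2(\psi/2)$ for all $0<s<s_0$; moreover $\phi$ is real for real arguments, hence $\widetilde U(s,s)\in\R$, which forces $\Im\phi(se^{i\psi},s)=O(s^3\psi^3)$ as $\psi\to0$.

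The rest is bookkeeping. I would split (\ref{F-circle}) at $\abs{\psi}=\psi_0$ for a small fixed $\psi_0$: on $\psi_0\le\abs{\psi}\le\pi$ one has $\Re\phi\ge\frac12\sin^2(\psi_0/2)=:c_0>0$ and $V$ bounded, so that piece is $O(e^{-c_0\oldlambda})$, negligible against $\oldlambda^{-1/2}$ once $\oldlambda\ge\oldlambda_1$. To the integral over $\abs{\psi}\le\psi_0$ apply Lemma \ref{abstract-lemma-multi} with $\Sigma=(0,s_0]$, $\sigma=s$, $g(\psi,s)=\Re\phi(se^{i\psi},s)$ and $k(x,\psi,s)=e^{-i\oldlambda\Im\phi(se^{i\psi},s)}e^{-i(n-1)\psi}V(se^{i\psi})$ (with $\abs{x}^2=\oldlambda$). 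Hypotheses \ref{abstract-b1} and \ref{abstract-b2} hold because $g\asymp\psi^2$ on $[-\psi_0,\psi_0]$ by the previous paragraph, and $k$ is bounded (hypothesis \ref{abstract-k2}) since $(\psi,s)$ ranges over a compact set avoiding the poles of $V$. For hypothesis \ref{abstract-k1} choose $\epsilon(\abs{x})\downarrow0$ slowly enough that $\oldlambda\,\epsilon(\abs{x})^3$ stays bounded while $\abs{x}\epsilon(\abs{x})\to\infty$ --- possible because $\oldlambda=\abs{x}^2$, e.g.\ $\epsilon(\abs{x})\sim\abs{x}^{-2/3}$: on $\abs{\psi}\le\epsilon(\abs{x})$ we then have $\oldlambda\abs{\Im\phi}\lesssim\oldlambda\,\psi^3$ bounded and $(n-1)\abs{\psi}$ small, while $V(se^{i\psi})$ is close to $V(0)>0$, so $\Re k\ge\frac12 V(0)>0$ there. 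Lemma \ref{abstract-lemma-multi} then gives $C_1'\oldlambda^{-1/2}\le\Re\bigl(\int_{-\psi_0}^{\psi_0}\bigr)\le\bigl|\int_{-\psi_0}^{\psi_0}\bigr|\le C_2'\oldlambda^{-1/2}$, and adding the negligible tail (enlarging $\oldlambda_1$ if needed) yields (\ref{Fprop-III-eqn}).

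The step I expect to be delicate is the \emph{uniformity in $s$} over the whole range $0<s<s_0$. The circle $\abs{\circlevar}=s$ is essentially forced --- a radius bounded away from $0$ would make $e^{-\oldlambda\phi}$ blow up near $\psi=0$ like $e^{\oldlambda\radius/(4s)}$ --- and the subtle point in (\ref{phi-factored}) is that the perturbation of the model exponent $\sin^2(\psi/2)$ is only $O(s^3)$ \emph{and} that $\Im\phi$ vanishes to cubic order in $\psi$. The cubic (not merely quadratic) vanishing is exactly what lets the shrinking radius $\epsilon(\abs{x})$ in Lemma \ref{abstract-lemma-multi} satisfy $\abs{x}\epsilon(\abs{x})\to\infty$; with only $\Im\phi=O(\psi^2)$ one would get $\abs{x}\epsilon(\abs{x})$ bounded and the lower bound would break down. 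Establishing the order‑two zero of $U$ on the diagonal $\circlevar=s$, together with the reality of $\widetilde U$ there, are the two facts that make both of these work.
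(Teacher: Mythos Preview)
Your proposal is correct and follows essentially the same approach as the paper: parametrize the contour as the circle $\circlevar=se^{i\psi}$ and feed the resulting one-dimensional integral into Lemma~\ref{abstract-lemma-multi}. Your factored form (\ref{phi-factored}) is a slightly cleaner way to obtain the uniform quadratic bounds on $\Re\phi$ and the cubic vanishing of $\Im\phi$ than the paper's direct second-derivative computation, but the underlying mechanism is identical.
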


 \begin{proof}
   Here the Gaussian approximation technique of Section
   \ref{steepest-descent-sec} is again applicable.  We will fix the
   contour in (\ref{Fdef}) as a circle of radius $\radius = s$,
   parametrize it, and examine the integrand directly.  Thus let $w =
   se^{i\polar}$ in (\ref{Fdef}) to obtain
 \begin{align}
   F(\oldlambda,s) = s^{n-1} \int_{-\pi}^\pi e^{-\oldlambda \phi(se^{i\polar},s)}
   a(i(\pi-se^{i\polar}))se^{i\polar}\,d\polar.
 \end{align}
 We shall apply Lemma \ref{abstract-lemma-multi}, with $m=1$, $\lambda=\polar$,
 $r=\pi$, $x=\sqrt{\oldlambda}$.  Let 
 \begin{align}
 g(\polar, s) &= \Re \phi(se^{i\polar},s) \\
 k(\sqrt{\oldlambda}, \polar, s) &= e^{-i \sqrt{\oldlambda}^2 \Im
   \phi(se^{i\polar},s)} s^n a(i(\pi-se^{i\polar}))e^{i\polar}
 \end{align}

 Since $\phi(s,s) = 0$ and $\circlevar=s$ is a critical point of
 $\phi(\circlevar,s)$, we have 
 \begin{align}
   \evalat{\frac{\partial^2}{\partial^2\polar} \phi(se^{i\polar},
   s)}{\polar=0} &= \frac{s}{4\pi} \phi''(s,s) (is)^2 = \frac{s^3
     \nu'(\pi-s)}{4\pi}
 \end{align}
which is bounded and positive for all small $s$ (recall $\nu(\pi-s)
\sim s^{-2}$).  Thus there exists $s_0, \epsilon$ small enough and
constants $b_1, b_2$ such that
\begin{equation}\label{g-bounds}
  b_1 \polar^2 \le g(\polar,s) \le b_2 \polar^2
\end{equation}
for $s < s_0$, $\abs{\polar} < \epsilon$.  Also, we have from (\ref{phi-series}) that
\begin{equation}
  \phi(se^{i\polar},s) = \frac{1}{2} - \frac{1}{2}\cos\polar -sU(se^{i\polar},s)
\end{equation}
so that by taking $s_0$ smaller if necessary, we can ensure
$g(\polar,s) > 0$ for all $s < s_0$ and $\epsilon \le \abs{\polar} \le
\pi$.  Thus (\ref{g-bounds}) holds for $s < s_0$ and all $\polar \in
   [-\pi,\pi]$, with possibly different constants $b_1, b_2$.

Boundedness of $k$ follows from the fact that $a$ has a pole of order
$n$ at $i\pi$, so $s^n a(i(\pi-se^{i\polar})) = V(se^{i\polar})$ is
bounded for small $s$.  Finally, since
$\evalat{\frac{\partial^2}{\partial^2\polar} \phi(se^{i\polar},
  s)}{\polar=0} > 0$ and $V(0)>0$, the argument used in the proof of Theorem
\ref{region-I-theorem} shows that the necessary lower bound on $k$
also holds.  Then an application of Lemma \ref{abstract-lemma-multi}
completes the proof.
 \end{proof}

 \begin{proof}[Proof of Theorem \ref{h-estimate}]
   Choose $\oldlambda_1, s_0$ so that Proposition \ref{Fprop-III} holds, and take
   $B_1$ large enough so that $\theta(x,z) \ge \pi-s$ when $\abs{z}
   \ge B_1 \abs{x}^2$.  Use this value of $\oldlambda_1$ and choose a
   $\delta$ such that Proposition \ref{Fprop-II} holds, and take $d_0$
   large enough that $s < \delta \oldlambda$ when $\abs{z} \ge d_0$
   (see (\ref{z-comparisons})).  So for such $(x,z)$, either
   (\ref{Fprop-II-eqn}) or (\ref{Fprop-III-eqn}) holds; which one depends on the value
   of $\oldlambda = \oldlambda (x,z)$.  We can combine them to get
   \begin{equation}\label{F-unified}
     C_1' \frac{\oldlambda^{n-1}}{1+\oldlambda^{n-\frac{1}{2}}} \le \Re
     F(\oldlambda,s) \le \abs{F(\oldlambda,s)} \le C_2'
     \frac{\oldlambda^{n-1}}{1+\oldlambda^{n-\frac{1}{2}}}.
   \end{equation}
   Inserting this into (\ref{hr-F}) and using (\ref{z-comparisons}),
   we have (in more compact notation)
   \begin{equation}
     h_r(x,z) \asymp \left(\frac{\oldlambda}{s}\right)^{n-1}
     \frac{1}{1+\oldlambda^{n-\frac{1}{2}}} \asymp
     \frac{\abs{z}^{n-1}}{1+(\abs{x}\sqrt{\abs{z}})^{n-\frac{1}{2}}}.
   \end{equation}
   By Lemma (\ref{hl-estimate}), $h_l$ is clearly negligible by
   comparison, so Theorem \ref{h-estimate} is proved.
\end{proof}

A similar argument will give us the estimates on $\grad p_1$ and $q_2$
which correspond to Theorems \ref{main-gradient-theorem} and
\ref{vertical-gradient-theorem}.

\begin{theorem}\label{gradient-thm-regionII}
  For $m$ odd, there exist constants $B_1, d_0, C$ such that
  \begin{equation}
    \abs{\grad p_1(x,z)} \asymp \frac{\abs{x}
      d(x,z)^{2n-m+1}}{1+(\abs{x}d(x,z))^{n+\frac{1}{2}}}e^{-\frac{1}{4}d(x,z)^2}
  \end{equation}
and
\begin{equation} \label{q2-upper-regionII}
  \abs{q_2(x,z)} \le C
  \frac{d(x,z)^{2n-m-1}}{1+(\abs{x}d(x,z))^{n-\frac{1}{2}}}e^{-\frac{1}{4}d(x,z)^2}
\end{equation}
  whenever $\abs{z}\ge B_1\abs{x}^2$ and $d(x,z) \ge d_0$.
\end{theorem}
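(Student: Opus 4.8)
The plan is to run the machinery of this section on the integral representations (\ref{q1-def}) and (\ref{q2-def}) of $q_1$ and $q_2$, and then combine the two estimates. Because (\ref{grad-q1-q2}) together with the orthonormality of $\hat{x}$ and $J_{\hat{z}}\hat{x}$ gives $\abs{\grad p_1(x,z)} = \frac{1}{2}(2\pi)^{-m}(4\pi)^{-n}\abs{x}\sqrt{q_1(x,z)^2 + q_2(x,z)^2}$, it is enough to estimate $\abs{q_1}$ and $\abs{q_2}$ for $\abs{z}\ge B_1\abs{x}^2$ (where $\theta(x,z)\ge\theta_0$ with $\theta_0$ near $\pi$, and $d(x,z)\asymp\sqrt{\abs{z}}$ by Corollary \ref{distance-estimate}) and then to compare their sizes.

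For $q_1$, I would rewrite (\ref{q1-def}) in polar coordinates and apply the Hankel expansion (\ref{hankel-expansion}) exactly as was done for $p_1$, obtaining
\[
q_1(x,z) = e^{-\frac{1}{4}d(x,z)^2}\sum_{k=1}^{(m-1)/2} c_{m,k}\abs{z}^{k-m+1}\int_{-\infty}^{\infty} e^{-\frac{\abs{x}^2}{4}\psi(\rho,\theta)}\,a_k^{(1)}(\rho)\,d\rho, \qquad a_k^{(1)}(\rho):=\Big(\frac{\rho}{\sinh\rho}\Big)^{n+1}\cosh\rho\,(-i\rho)^k,
\]
with $\psi$ and $c_{m,k}$ as in Section \ref{polar-sec}. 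The only difference from the $p_1$ expansion is the extra factor $\frac{\rho}{\sinh\rho}\cosh\rho$, so $a_k^{(1)}$ has a pole of order $n+1$ (rather than $n$) at $\rho=i\pi$, stays integrable along $\Im\rho=3\pi/2$, and has $a_k^{(1)}(i\theta)=(\theta/\sin\theta)^{n+1}\cos\theta\,\theta^k$, which is \emph{negative} for $\theta$ near $\pi$ --- that is, $q_1<0$ here, just as in the proof of Theorem \ref{region-I-theorem}. Since the proof of Theorem \ref{h-estimate} depends on $a$ only through the order of its pole at $i\pi$ and the sign of $V(0)$ in (\ref{a-series}), it carries over with $n$ replaced by $n+1$ (the two bounds being swapped by the sign), giving $\abs{\int e^{-\frac{\abs{x}^2}{4}\psi}a_k^{(1)}\,d\rho}\asymp \abs{z}^{n}(1+(\abs{x}\sqrt{\abs{z}})^{n+1/2})^{-1}$. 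The $k=(m-1)/2$ term dominates as $\abs{z}\to\infty$, so
\[
\abs{q_1(x,z)}\asymp \frac{\abs{z}^{n-(m-1)/2}}{1+(\abs{x}\sqrt{\abs{z}})^{n+\frac{1}{2}}}e^{-\frac{1}{4}d(x,z)^2}\asymp \frac{d(x,z)^{2n-m+1}}{1+(\abs{x}d(x,z))^{n+\frac{1}{2}}}e^{-\frac{1}{4}d(x,z)^2}.
\]

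For $q_2$ the quickest route is the identity $q_2=\partial p_1/\partial\abs{z}$ from (\ref{q2-def}). Differentiating the expansion (\ref{p-expansion-phi}) term by term (justified by dominated convergence, the integrands decaying exponentially in $\rho$) and using $\partial_{\abs{z}}\theta=4(\abs{x}^2\nu'(\theta))^{-1}$, $\partial_{\abs{z}}(-\frac14 d^2)=-\theta$, $\partial_\theta\psi(\rho,\theta)=-\nu'(\theta)(i\rho+\theta)$, and $(i\rho)a_k(\rho)=-a_{k+1}(\rho)$, the $-\theta h_k$ contributions cancel and one is left with
\[
q_2(x,z)=e^{-\frac{1}{4}d(x,z)^2}\left[\sum_{k=1}^{(m-1)/2}c_{m,k}(k-m+1)\abs{z}^{k-m}h_k(x,z)-\sum_{k=1}^{(m-1)/2}c_{m,k}\abs{z}^{k-m+1}h_{k+1}(x,z)\right],
\]
where $h_j(x,z)=\int_{-\infty}^{\infty}e^{-\frac{\abs{x}^2}{4}\psi(\rho,\theta)}a_j(\rho)\,d\rho$ with $a_j(\rho)=(\rho/\sinh\rho)^n(-i\rho)^j$ the original $p_1$-amplitude of pole order $n$. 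The dominant term as $\abs{z}\to\infty$ is the $k=(m-1)/2$ term of the second sum, which involves $h_{(m+1)/2}$; since $a_{(m+1)/2}(i\theta)=(\theta/\sin\theta)^n\theta^{(m+1)/2}\ge 1$ near $\theta=\pi$, Theorem \ref{h-estimate} gives $\abs{h_{(m+1)/2}}\asymp\abs{z}^{n-1}(1+(\abs{x}\sqrt{\abs{z}})^{n-1/2})^{-1}$, hence
\[
\abs{q_2(x,z)}\asymp \frac{\abs{z}^{n-(m+1)/2}}{1+(\abs{x}\sqrt{\abs{z}})^{n-\frac{1}{2}}}e^{-\frac{1}{4}d(x,z)^2}\asymp \frac{d(x,z)^{2n-m-1}}{1+(\abs{x}d(x,z))^{n-\frac{1}{2}}}e^{-\frac{1}{4}d(x,z)^2},
\]
which in particular gives (\ref{q2-upper-regionII}).

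Finally, from $d(x,z)^2\asymp\abs{z}\ge B_1\abs{x}^2$ one has $\abs{x}d(x,z)\lesssim B_1^{-1/2}d(x,z)^2$, so $\abs{q_2}/\abs{q_1}\asymp d(x,z)^{-2}\max(1,\abs{x}d(x,z))\lesssim \max(d_0^{-2},B_1^{-1/2})$, which is bounded --- and small once $B_1,d_0$ are large. Therefore $\sqrt{q_1^2+q_2^2}\asymp\abs{q_1}$ and $\abs{\grad p_1}\asymp\abs{x}\abs{q_1}$, which is the asserted estimate. The main obstacle I anticipate is the bookkeeping needed to confirm that Theorem \ref{h-estimate} really does transfer to an amplitude with a pole of order $n+1$ and to a $V(0)$ of either sign: one must re-examine its proof (Lemmas \ref{hl-estimate} and \ref{g_m}, Corollary \ref{Fstructure}, Propositions \ref{Fprop-II} and \ref{Fprop-III}) with the exponent $n-1$ appearing in (\ref{Fdef}) and (\ref{hr-F}) raised by one, and verify that the subleading terms of the two expansions are genuinely lower order in $\abs{z}$ --- both routine, but requiring care.
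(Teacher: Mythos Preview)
Your proposal is correct and follows the same route as the paper: expand $q_1$ and $q_2$ via the Hankel formula, apply Theorem \ref{h-estimate} term by term (with pole order $n+1$ for $q_1$ and $n$ for $q_2$, and the sign of $V(0)$ tracked through $\cosh(i\pi)=-1$), pick off the dominant $k=(m-1)/2$ term, and then show $q_1$ dominates $q_2$. The only cosmetic differences are that the paper differentiates the form (\ref{p-expansion}) directly in $\abs{z}$ rather than chain-ruling through $\theta$ in (\ref{p-expansion-phi}) as you do (your cancellation is correct and recovers the same expression), and the paper compares $q_1$ and $q_2$ by a two-case split on whether $\abs{x}^2\abs{z}$ is bounded, whereas your single ratio bound $\abs{q_2}/\abs{q_1}\asymp d^{-2}\max(1,\abs{x}d)$ is a slightly cleaner way to reach the same conclusion.
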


\begin{proof}
  Applying (\ref{grad-q1-q2}) to (\ref{p-expansion}), we have
\begin{align*}
  \grad p_1(x,z) &= -\frac{1}{2}(2\pi)^{-m}(4\pi)^{-n}\abs{x} (q_1(x,z) \hat{x} + q_2(x,z)
  J_{\hat{z}}\hat{x}) \label{grad-q1-q2}\\
   \intertext{where}
   q_1(x,z) &= -\frac{2}{\abs{x}} \frac{\partial p_1(x,z)}{\partial
   \abs{x}} \\
   &= - \sum_{k=0}^{(m-1)/2} c_{m,k}  \abs{z}^{k-m+1} \int_{-\infty}^\infty
   e^{{i} \rho \abs{z} -\frac{\abs{x}^2}{4} \rho \coth
     \rho} \left(\frac{\rho}{\sinh\rho}\right)^{n+1} (-\cosh\rho) (-i\rho)^{k}
  \,d\rho \\
 q_2(x,z) &=\frac{\partial p_1(x,z)}{\partial
   \abs{z}} \\
 &= \sum_{k=0}^{(m-1)/2} \left[ c_{m,k} (k-m+1)\abs{z}^{k-m} \int_{-\infty}^\infty
   e^{{i} \rho \abs{z} -\frac{\abs{x}^2}{4} \rho \coth
     \rho} \left(\frac{\rho}{\sinh\rho}\right)^n (-i\rho)^{k}
   \,d\rho\right] \\
   &\quad  - \sum_{k=0}^{(m-1)/2} \left[c_{m,k} \abs{z}^{k-m+1} \int_{-\infty}^\infty
   e^{{i} \rho \abs{z} -\frac{\abs{x}^2}{4} \rho \coth
     \rho} \left(\frac{\rho}{\sinh\rho}\right)^n (-i\rho)^{k+1}
   \,d\rho \right]
\end{align*}
  Each integral can be estimated by Theorem \ref{h-estimate}.  For
  $q_1$, each integral is comparable to
  $e^{-\frac{1}{4}d(x,z)^2}\frac{\abs{z}^{n}}{1+(\abs{x}\sqrt{\abs{z}})^{n+\frac{1}{2}}}$,
  and the \mbox{$k=(m-1)/2$} term dominates, so
  \begin{equation}\label{q1-estimate-regionII}
    \abs{q_1(x,z)} \asymp \frac{\abs{z}^{n-(m-1)/2}}{1+(\abs{x}\sqrt{\abs{z}})^{n+\frac{1}{2}}}e^{-\frac{1}{4}d(x,z)^2}.
  \end{equation}
  The appearance of the extra minus sign in $q_1$ is to account for
  the fact that $\cosh(i\pi) = -1$, but Theorem \ref{h-estimate}
  requires that $a(\lambda)$ be positive near $\lambda=i\pi$.

  For $q_2$, each integral is comparable to
  $\frac{\abs{z}^{n-1}}{1+(\abs{x}\sqrt{\abs{z}})^{n-\frac{1}{2}}}e^{-\frac{1}{4}d(x,z)^2}$,
  and the \mbox{$k=(m-1)/2$} term of the second sum dominates, so
  \begin{equation}\label{q2-estimate-regionII}
    \abs{q_2(x,z)} \asymp
    \frac{\abs{z}^{n-1-(m-1)/2}}{1+(\abs{x}\sqrt{\abs{z}})^{n-\frac{1}{2}}}
    e^{-\frac{1}{4}d(x,z)^2}
  \end{equation}
  which in particular implies (\ref{q2-upper-regionII}).  To combine
  (\ref{q1-estimate-regionII}) and (\ref{q2-estimate-regionII}), note
  that for $\abs{x}^2 \abs{z}$ bounded we have
  \begin{equation}
    \abs{q_1(x,z)} \asymp \abs{z}^{n-(m-1)/2}e^{-\frac{1}{4}d(x,z)^2}; \quad
    \abs{q_2(x,z)} \asymp \abs{z}^{n-1-(m-1)/2}e^{-\frac{1}{4}d(x,z)^2}
  \end{equation}
  so that the $q_1$ term dominates, and
  \begin{equation}
    \abs{\grad p_1(x,z)} \asymp \abs{x} \abs{z}^{n-(m-1)/2}e^{-\frac{1}{4}d(x,z)^2}.
  \end{equation}
  For $\abs{x}^2 \abs{z}$ bounded away from $0$ we have
  \begin{equation}
    \begin{split}
    \abs{q_1(x,z)} &\asymp \abs{x}^{-n-\frac{1}{2}} \abs{z}^{\frac{n}{2}
      - \frac{m}{2} + \frac{1}{4}}e^{-\frac{1}{4}d(x,z)^2} 
\\  \abs{q_2(x,z)} &\asymp
    \abs{x}^{-n+\frac{1}{2}} \abs{z}^{\frac{n}{2} - \frac{m}{2} -
      \frac{1}{4}}e^{-\frac{1}{4}d(x,z)^2} \asymp
    \frac{\abs{x}}{\sqrt{\abs{z}}} q_1(x,z)
    \end{split}
  \end{equation}
  so that the $q_1$ term dominates again
  ($\frac{\abs{x}}{\sqrt{\abs{z}}}$ is bounded by assumption).  Thus
  \begin{equation}
    \abs{\grad p_1(x,z)} \asymp \abs{x}
    \frac{\abs{z}^{n-(m-1)/2}}{1+(\abs{x}\sqrt{\abs{z}})^{n+\frac{1}{2}}}e^{-\frac{1}{4}d(x,z)^2}
  \end{equation}
  which is equivalent to the desired estimate.
\end{proof}

\section{Hadamard descent}\label{hadamard-sec}

In this section, we obtain estimates for $p_1(x,z)$ and $\abs{\grad
  p_1(x,z)}$ for $\abs{z} \ge B_1 \abs{x}^2$, $\abs{z} \ge d_0$, in
the case where the center dimension $m$ is even.  The methods of the
previous section are not directly applicable, but we can deduce an
estimate for even $m$ by integrating the corresponding estimate for
$m+1$.  As discussed in the remark at the end of Section
\ref{sublaplacian-sec}, this is valid even though there may not exist
an $H$-type group of dimension $2n+m+1$ with center dimension $m+1$,
since the estimates we use are derived from the formula
(\ref{Rm-integral}) and hold for all values of $n,m$.

We continue to assume that $\abs{z} \ge B_1 \abs{x}^2$ and $\abs{z}
\ge d_0$ for some sufficiently large $B_1, d_0$.  To emphasize the
dependence on the dimension, we write $p^{(n,m)}$ for the function
$p_1$ in (\ref{Rm-integral}).

In order to estimate $p^{(n,m)}$ for $m$ even, we consider
$p^{(n,m+1)}$.  We can observe that
\begin{equation}\label{dimension-reduce}
  p^{(n,m)}(x,z) = \int_\R p^{(n,m+1)}(x, (z, z_{m+1}))\,dz_{m+1}
\end{equation}
since $\int_\R\int_\R e^{i \lambda_{m+1} z_{m+1}}
f(\lambda_{m+1})\,d\lambda_{m+1}\,dz_{m+1} = 2 \pi f(0)$.  Note that
$\abs{(\lambda,0)}_{\R^{m+1}} = \abs{\lambda}_{\R^m}$.  Now
$p^{(n,m+1)}$ can be estimated by means of Theorem
\ref{region-II-III-theorem}.  Using the fact that $\abs{(z,z_{m+1})}
\ge \abs{z}$, we have that for $m$ even, there exist constants $B_1,
d_0$ such that
  \begin{equation}\label{p-Q-comp}
    p^{(n,m)}(x,z) \asymp  Q^{(2n-m-2,n-\frac{1}{2})}(x,z)
  \end{equation}
  whenever $\abs{z} \ge B_1 \abs{x}^2$ and $\abs{z} \ge d_0$, where
  \begin{equation}\label{Qnm-def}
    Q^{(\alpha,\beta)}(x,z) := \int_\R \frac{d(x,(z,z_{m+1}))^{\alpha}}{1+(\abs{x}d(x,(z,z_{m+1})))^{\beta}}
    e^{-\frac{1}{4}d(x,(z,z_{m+1}))^2}\,dz_{m+1}
  \end{equation}
  Thus it suffices to estimate the integrated bounds given by
  $Q^{(\alpha,\beta)}$.

  \begin{lemma}\label{Q-est}
    For $\abs{z} \ge B_1 \abs{x}^2$ and $\abs{z} \ge d_0$, we have
    \begin{equation}
      Q^{(\alpha,\beta)}(x,z) \asymp \frac{d(x,z)^{\alpha +
          1}}{1+(\abs{x}d(x,z))^\beta} e^{-\frac{1}{4}d(x,z)^2}.
    \end{equation}
  \end{lemma}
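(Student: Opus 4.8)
The plan is a Laplace-type estimate in the single extra variable $t := z_{m+1}$: we show that the integral defining $Q^{(\alpha,\beta)}(x,z)$ is dominated by the range $|t| \lesssim d(x,z)$, on which the integrand is essentially constant and equal to the asserted right-hand side. We use freely that, by (\ref{distance-formula}), $d(x,z')$ depends on $z'$ only through $|z'|$, so $d(x,(z,t))$ depends on $t$ only through $w(t) := |(z,t)| = \sqrt{|z|^2 + t^2}$; write $f(t) := d(x,(z,t))^2$, an even function of $t$ with $f(0) = d(x,z)^2$. Throughout we work in the region $|z| \ge B_1 |x|^2$, $|z| \ge d_0$, where Corollary \ref{distance-estimate} gives $d(x,z)^2 \asymp |x|^2 + |z| \asymp |z|$; we may enlarge $B_1$ and $d_0$ as convenient.

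The one nonroutine ingredient is an exact formula for the radial derivative of $d^2$. Regarding $d(x,\cdot)^2$ as a function of $w = |z'| > 0$ (with $x$ fixed), (\ref{distance-formula}) gives $d(x,\cdot)^2 = |x|^2(\theta/\sin\theta)^2$, where $\theta \in (0,\pi)$ is determined by $\nu(\theta) = 4w/|x|^2$ (and $d(x,\cdot)^2 = 4\pi w$ when $x = 0$). Differentiating the relation $\nu(\theta) = 4w/|x|^2$ in $w$ and substituting the expression $\nu'(\theta) = 2(\sin\theta - \theta\cos\theta)/\sin^3\theta$ from the proof of Lemma \ref{nu-increase}, a short computation yields
\begin{equation*}
  \frac{d}{dw}\,d(x,\cdot)^2 = 4\theta .
\end{equation*}
In our region $\nu(\theta) = 4w/|x|^2 \ge 4B_1$, so $\theta \ge \theta_0 := \nu^{-1}(4B_1) > 0$, and hence $4\theta_0 \le \frac{d}{dw}\,d(x,\cdot)^2 < 4\pi$ for all $w \ge B_1 |x|^2$. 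Integrating along $[\,|z|, w(t)\,]$ gives
\begin{equation*}
  4\theta_0\bigl(\sqrt{|z|^2 + t^2} - |z|\bigr) \le f(t) - f(0) \le 4\pi\bigl(\sqrt{|z|^2 + t^2} - |z|\bigr) .
\end{equation*}
Since $\sqrt{|z|^2 + t^2} - |z| = t^2/(\sqrt{|z|^2 + t^2} + |z|)$, this quantity is $\asymp t^2/|z| \asymp t^2/d(x,z)^2$ when $|t| \le |z|$, and $\asymp |t|$ when $|t| \ge |z|$.

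For the main contribution we integrate over $|t| \le |z|$. There $f(t) - f(0) \asymp t^2/d(x,z)^2$ and $f(t) \le f(0) + C|z| \asymp d(x,z)^2$, so $d(x,(z,t)) \asymp d(x,z)$ and the prefactor $d(x,(z,t))^\alpha/(1 + (|x|\,d(x,(z,t)))^\beta)$ is comparable, uniformly in $t$, to its value at $t = 0$. Therefore
\begin{equation*}
  \int_{|t|\le|z|} \frac{d(x,(z,t))^\alpha}{1+(|x|\,d(x,(z,t)))^\beta}\, e^{-\frac14 f(t)}\,dt \;\asymp\; \frac{d(x,z)^\alpha}{1+(|x|\,d(x,z))^\beta}\, e^{-\frac14 d(x,z)^2} \int_{|t|\le|z|} e^{-\frac14\bigl(f(t)-f(0)\bigr)}\,dt ,
\end{equation*}
and from $c_1 t^2/|z| \le f(t) - f(0) \le c_2 t^2/|z|$ on this range, the substitution $t = \sqrt{|z|}\,u$ (together with $|z| \ge d_0$ bounded below) gives $\int_{|t|\le|z|} e^{-\frac14(f(t)-f(0))}\,dt \asymp \sqrt{|z|} \asymp d(x,z)$. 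This produces exactly $d(x,z)^{\alpha+1}\bigl(1 + (|x|\,d(x,z))^\beta\bigr)^{-1} e^{-\frac14 d(x,z)^2}$.

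It remains to show the tail $|t| > |z|$ is negligible. There $f(t) \ge f(0) + c\,|t|$ for a fixed $c > 0$, while $f(t) \le d(x,z)^2 + 4\pi|t|$ bounds $d(x,(z,t))$, and hence the prefactor, by a polynomial in $d(x,z)$ and $|t|$. Integrating this against $e^{-\frac14 f(t)}$ over $|t| > |z|$ and using $|z| \asymp d(x,z)^2$ yields a bound of the form $e^{-\frac14 d(x,z)^2}\, P_1(d(x,z))\, e^{-c'' d(x,z)^2}$ with $c'' > 0$. Since $|x| \lesssim d(x,z)$ in this region, we have $d(x,z)^{\alpha+1}\bigl(1 + (|x|\,d(x,z))^\beta\bigr)^{-1} \gtrsim d(x,z)^{\alpha+1-2\beta}$, so the tail is $\ll$ the main term once $d_0$ is taken large enough; combining the two contributions gives the claimed comparability. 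The one step requiring thought is the radial derivative identity $\frac{d}{dw}\,d(x,\cdot)^2 = 4\theta$, which reduces the control of $f(t) - f(0)$ to an elementary integral; the rest is a standard Gaussian/Laplace argument.
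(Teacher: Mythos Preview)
Your argument is correct, and in fact your key identity $\tfrac{d}{dw}\,d(x,\cdot)^2 = 4\theta$ is a sharper version of the paper's Lemma~\ref{dist-z-deriv} (which only asserts that this derivative is bounded above and below). The overall route, however, differs from the paper's. The paper passes to the radial variable $u = |(z,z_{m+1})|$, picking up the Jacobian factor $u/\sqrt{u^2-|z|^2}$ with its integrable endpoint singularity, and then splits the $u$-integral at $|z|+1$: the near piece is handled by monotonicity and the Lipschitz bound of Lemma~\ref{dist-z-deriv}, while the far piece is reduced via the change of variables $w=\tfrac14 d(x,u)^2$ to an integral controlled by Lemma~\ref{exp-poly-est}. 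You instead keep the original variable $t=z_{m+1}$ and run a direct Laplace/Gaussian argument: on $|t|\le |z|$ the quadratic behaviour $f(t)-f(0)\asymp t^2/|z|$ (coming from your derivative identity) makes the prefactor essentially constant and the Gaussian integral produces the extra factor $\sqrt{|z|}\asymp d(x,z)$; on $|t|>|z|$ the linear lower bound $f(t)-f(0)\gtrsim |t|$ gives an exponentially small tail. Your approach sidesteps the Jacobian singularity and makes the Gaussian width explicit, at the cost of a slightly more delicate tail comparison; the paper's approach trades that for the endpoint analysis in $u$ but then has a cleaner tail via Lemma~\ref{exp-poly-est}. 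Both rest on the same analytic fact about $\partial_{|z|} d^2$.
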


We will require two preliminary computations.  Since $d(x,z)$ depends
on $z$ only through $\abs{z}$, we will occasionally treat $d$ as a
function on $\R^{2n} \times [0,\infty)$.

\begin{lemma}\label{dist-z-deriv}
  There exist positive constants $c_1, c_2, B_1$ such that for all
  $x \in \R^{2n}, u \in \R$ with $u
  \ge B_1 \abs{x}^2$, we have $0 < c_1 \le \frac{\partial}{\partial u}
  d(x,u)^2 \le c_2 < \infty$.
\end{lemma}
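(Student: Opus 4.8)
The plan is to obtain an explicit formula for $\frac{\partial}{\partial u}d(x,u)^2$; it will turn out to equal $4\theta(x,u)$, after which both bounds are immediate.

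First I would dispose of the degenerate case $x=0$: by (\ref{distance-formula}), $d(0,u)^2 = 4\pi u$, so $\frac{\partial}{\partial u}d(0,u)^2 = 4\pi$, and any choice with $c_1 \le 4\pi \le c_2$ suffices (the constraint $u \ge B_1\cdot 0$ being vacuous). So assume $x \ne 0$; then $u \ge B_1\abs{x}^2$ forces $u > 0$, and Theorem \ref{distance} lets us write $\theta = \theta(x,u) \in (0,\pi)$ for the unique solution of $\nu(\theta) = 4u/\abs{x}^2$, with $d(x,u)^2 = \abs{x}^2\,\theta^2/\sin^2\theta$. By Lemma \ref{nu-increase} we have $\nu' > 0$ on $[0,\pi)$, so the relation $u = \tfrac{1}{4}\abs{x}^2\nu(\theta)$ exhibits $u$ as a smooth, strictly increasing function of $\theta$ for fixed $x$; hence $\theta$ is a smooth function of $u$ and $\frac{\partial u}{\partial\theta} = \tfrac{1}{4}\abs{x}^2\nu'(\theta)$.

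Next I would carry out the differentiation. Using the formula $\nu'(\theta) = \frac{2(\sin\theta - \theta\cos\theta)}{\sin^3\theta}$ from the proof of Lemma \ref{nu-increase}, a short computation gives $\frac{d}{d\theta}\!\left(\frac{\theta^2}{\sin^2\theta}\right) = \frac{2\theta(\sin\theta - \theta\cos\theta)}{\sin^3\theta} = \theta\,\nu'(\theta)$, so that $\frac{\partial}{\partial\theta}d(x,u)^2 = \abs{x}^2\,\theta\,\nu'(\theta)$. Dividing by $\frac{\partial u}{\partial\theta}$ via the chain rule cancels the common factor $\tfrac{1}{4}\abs{x}^2\nu'(\theta)$ and yields the clean identity
\begin{equation*}
  \frac{\partial}{\partial u}d(x,u)^2 = 4\,\theta(x,u).
\end{equation*}

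Finally, the bounds follow at once. Since $\theta \in [0,\pi)$ in all cases, $\frac{\partial}{\partial u}d(x,u)^2 = 4\theta \le 4\pi$, so $c_2 := 4\pi$ works. For the lower bound, $u \ge B_1\abs{x}^2$ gives $\nu(\theta) = 4u/\abs{x}^2 \ge 4B_1$; as $\nu$ is continuous and strictly increasing on $[0,\pi)$ with $\nu(0) = 0$, there is a unique $\theta_1 = \theta_1(B_1) \in (0,\pi)$ with $\nu(\theta_1) = 4B_1$, and $\nu(\theta) \ge 4B_1$ forces $\theta \ge \theta_1$; hence $\frac{\partial}{\partial u}d(x,u)^2 = 4\theta \ge 4\theta_1 =: c_1 > 0$. (In fact any $B_1 > 0$ suffices for this lemma; the ``there exist'' phrasing anticipates later uses where $B_1$ must also be taken large for other reasons.) There is no real obstacle here: the only care needed is the bookkeeping at $x = 0$ and the elementary verification that $\frac{d}{d\theta}(\theta^2/\sin^2\theta) = \theta\,\nu'(\theta)$.
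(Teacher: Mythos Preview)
Your proof is correct and follows essentially the same route as the paper: both compute $\partial_u d(x,u)^2$ via the chain rule through $\theta$, obtaining (up to constants) the ratio $\mu'(\theta)/\nu'(\theta)$ where $\mu(\theta)=\theta^2/\sin^2\theta$. You carry the computation one step further than the paper by simplifying this ratio explicitly to $\theta$, which makes the bounds immediate; the paper instead just notes positivity of $\mu',\nu'$ and the limit $\mu'/\nu'\to\pi$ as $\theta\to\pi$, and does not separately treat the case $x=0$.
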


\begin{proof}
  Let $\mu(\theta) = \frac{\theta^2}{\sin^2\theta}$, so that $d(x,u)^2
  = \abs{x}^2 \mu(\theta)$ with $\theta = \theta(x,z) =
  \nu^{-1}\left(\frac{2u}{\abs{x}^2}\right)$.  Then
  \begin{equation}
    \frac{\partial}{\partial u}
  d(x,u)^2 = 2 \frac{\mu'(\theta)}{\nu'(\theta)}.
  \end{equation}
  It is easily verified that $\mu'(\theta) > 0$, $\nu'(\theta) > 0$
  for all $\theta \in (0,\pi)$, and $\frac{\mu'(\theta)}{\nu'(\theta)}
  \to \pi > 0$ as $\theta \to \pi$.
\end{proof}

\begin{lemma}\label{exp-poly-est}
  For any $\alpha \in \R$, there exists $C_\alpha > 0$ such that for
  all $w_0 \ge 1$ we have
  \begin{equation}
    \int_{w_0}^\infty w^\alpha e^{-w}\,dw \le C_\alpha w_0^\alpha e^{-w_0}.
  \end{equation}
\end{lemma}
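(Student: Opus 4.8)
The plan is to reduce the tail integral to a single convergent integral that is independent of $w_0$, by the shift $w = w_0 + t$. First I would write, for $t \ge 0$,
\begin{equation*}
  \int_{w_0}^\infty w^\alpha e^{-w}\,dw = e^{-w_0} \int_0^\infty (w_0+t)^\alpha e^{-t}\,dt,
\end{equation*}
so that everything comes down to bounding $(w_0+t)^\alpha$ by $w_0^\alpha$ times a fixed factor that is integrable against $e^{-t}$.

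The key step is the elementary estimate: for $w_0 \ge 1$ and $t \ge 0$,
\begin{equation*}
  (w_0+t)^\alpha \le w_0^\alpha (1+t)^{\alpha_+}, \qquad \alpha_+ := \max\{\alpha,0\}.
\end{equation*}
Indeed, if $\alpha \ge 0$ then $(w_0+t)^\alpha = w_0^\alpha (1 + t/w_0)^\alpha \le w_0^\alpha (1+t)^\alpha$ because $t/w_0 \le t$ when $w_0 \ge 1$; and if $\alpha < 0$ then $(w_0+t)^\alpha \le w_0^\alpha$ since $w_0 + t \ge w_0$ and $x \mapsto x^\alpha$ is nonincreasing. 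Plugging this in gives
\begin{equation*}
  \int_{w_0}^\infty w^\alpha e^{-w}\,dw \le w_0^\alpha e^{-w_0} \int_0^\infty (1+t)^{\alpha_+} e^{-t}\,dt,
\end{equation*}
and I would take $C_\alpha := \int_0^\infty (1+t)^{\alpha_+} e^{-t}\,dt$, which is finite because the polynomial factor $(1+t)^{\alpha_+}$ is dominated by $e^{t/2}$ for large $t$.

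I do not expect any real obstacle here; the only point requiring a moment's care is the regime $\alpha < 0$, where the naive bound $(1+t/w_0)^\alpha \le (1+t)^\alpha$ goes the wrong way and one must instead invoke monotonicity of $x^\alpha$ directly. As an alternative route one could split cases: for $\alpha \le 0$ the bound is immediate with $C_\alpha = 1$ since $w \mapsto w^\alpha$ is nonincreasing, giving $\int_{w_0}^\infty w^\alpha e^{-w}\,dw \le w_0^\alpha \int_{w_0}^\infty e^{-w}\,dw = w_0^\alpha e^{-w_0}$, while for $\alpha > 0$ one could integrate by parts $\lceil \alpha \rceil$ times to reduce to $\alpha \in [0,1)$; but the shift argument above disposes of all real $\alpha$ at once and is the one I would present.
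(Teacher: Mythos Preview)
Your proof is correct and takes a genuinely different route from the paper's. The paper argues by cases and induction: for $\alpha \le 0$ it uses the same monotonicity observation you mention (giving $C_\alpha = 1$), and for $\alpha > 0$ it integrates by parts once,
\[
  \int_{w_0}^\infty w^\alpha e^{-w}\,dw = w_0^\alpha e^{-w_0} + \alpha \int_{w_0}^\infty w^{\alpha-1} e^{-w}\,dw,
\]
to reduce $\alpha$ by $1$, and then inducts on $\lceil \alpha \rceil$ to reach the base case $\alpha \le 0$; this is exactly the ``alternative route'' you sketch at the end. Your shift $w = w_0 + t$ together with the pointwise bound $(w_0+t)^\alpha \le w_0^\alpha (1+t)^{\alpha_+}$ is more direct and treats all real $\alpha$ in one stroke, yielding the explicit constant $C_\alpha = \int_0^\infty (1+t)^{\alpha_+} e^{-t}\,dt$. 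The paper's inductive argument, on the other hand, produces a recursive formula $C_\alpha = 1 + \alpha C_{\alpha-1}$ for $\alpha > 0$. Both are elementary; yours is shorter and avoids any induction, while the paper's is perhaps more in the spirit of Watson's lemma/incomplete gamma asymptotics.
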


\begin{proof}
  For $\alpha \le 0$, $w^\alpha$ is decreasing for $w \ge 1$, so
  \begin{equation}
    \int_{w_0}^\infty w^\alpha e^{-w}\,dw \le w_0^\alpha
    \int_{w_0}^\infty e^{-w}\,dw = w_0^\alpha e^{-w_0}
  \end{equation}
  and this holds with $C_\alpha = 1$.  Now, for a nonnegative integer
  $n$, suppose the lemma holds for all $\alpha \le n$.  Then if $n <
  \alpha \le n+1$, we integrate by parts to obtain
  \begin{align*}
    \int_{w_0}^\infty w^\alpha e^{-w}\,dw = w_0^\alpha e^{-w_0} +
    \alpha \int_{w_0}^{\infty} w^{\alpha - 1} e^{-w}\,dw \le (1+\alpha
    C_{\alpha - 1}) w_0^\alpha e^{-w_0}
  \end{align*}
  so that the lemma also holds for all $\alpha \le n+1$.  By induction
  the proof is complete.
\end{proof}

\begin{proof}[Proof of Lemma \ref{Q-est}]
We make the change of variables $u = \abs{(z, z_{m+1})}$ so that
$z_{m+1} = \sqrt{u^2 - \abs{z}^2}$.  By our previous abuse of
notation, we can write $d(x,(z,z_{m+1}))=d(x,u)$.  Thus
\begin{align*}
  Q^{(\alpha,\beta)}(x,z) &= \int_{\abs{z}}^{\infty} 
  \frac{d(x,u)^{\alpha}}{1+(\abs{x}d(x,u))^{\beta}}
  e^{-\frac{1}{4}d(x,u)^2} \frac{u}{\sqrt{u^2-\abs{z}^2}}\,du \\
    &\asymp \int_{\abs{z}}^{\infty} \frac{1}{\sqrt{u-\abs{z}}}
  \frac{1}{\sqrt{u+\abs{z}}}
  \frac{d(x,u)^{\alpha+2}}{1+(\abs{x}d(x,u))^{\beta}} e^{-\frac{1}{4}d(x,u)^2} \,du.
\end{align*}
We used the fact that $u \asymp d(x,u)^2$ where $\abs{z} \ge B_1
\abs{x}^2$, by Corollary \ref{distance-estimate}.

Now, noting that $u \mapsto d(x,u)$ is an increasing function, and $w
\mapsto w^{\alpha+2}e^{-\frac{1}{4}w^2}$ is decreasing for large
enough $w$, the lower bound can be obtained by
\begin{align*}
  Q^{(\alpha,\beta)}(x,z) &\ge \int_{\abs{z}}^{\abs{z}+1} \frac{1}{\sqrt{u-\abs{z}}}
  \frac{1}{\sqrt{u+\abs{z}}}
  \frac{d(x,u)^{\alpha+2}}{1+(\abs{x}d(x,u))^{\beta}}
  e^{-\frac{1}{4}d(x,u)^2} \,du  \\
&\ge \left( \int_{\abs{z}}^{\abs{z}+1} \frac{1}{\sqrt{u-\abs{z}}}
  \,du  \right) \frac{1}{\sqrt{2\abs{z}+1}}
  \frac{d(x,\abs{z}+1)^{\alpha+2}}{1+(\abs{x}d(x,\abs{z}+1))^{\beta}}
  e^{-\frac{1}{4}d(x,\abs{z}+1)^2}  \\
  &= 2 \frac{1}{\sqrt{2\abs{z}+1}}
  \frac{d(x,\abs{z}+1)^{\alpha+2}}{1+(\abs{x}d(x,\abs{z}+1))^{\beta}}
  e^{-\frac{1}{4}d(x,\abs{z}+1)^2} \\
  &\ge C \frac{1}{\sqrt{2\abs{z}}}
  \frac{d(x,{z})^{\alpha+2}}{1+(\abs{x}d(x,{z}))^{\beta}}
  e^{-\frac{1}{4}d(x,{z})^2}
\end{align*}
where the last line follows because $u \mapsto d(x,u)^2$ is Lipschitz,
as shown by Lemma \ref{dist-z-deriv}, with a constant independent of $x$.

Since $\abs{z} \asymp d(x,z)^2$, we have that
\begin{equation}
  Q^{(\alpha,\beta)}(x,z)\ge C'
  \frac{d(x,{z})^{\alpha+1}}{1+(\abs{x}d(x,{z}))^{\beta}}
  e^{-\frac{1}{4}d(x,{z})^2}.
\end{equation}

For an upper bound, we have
\begin{align*}
  Q^{(\alpha,\beta)}(x,z) \le C \left[ \int_{\abs{z}}^{\abs{z}+1} \frac{1}{\sqrt{u-\abs{z}}}
  \frac{1}{\sqrt{u+\abs{z}}}
  \frac{d(x,u)^{\alpha+2}}{1+(\abs{x}d(x,u))^{\beta}}
  e^{-\frac{1}{4}d(x,u)^2} \,du + \int_{\abs{z}+1}^\infty \dots. \right]
\end{align*}
Now
\begin{align*}
  &\quad \int_{\abs{z}}^{\abs{z}+1} \frac{1}{\sqrt{u-\abs{z}}}
  \frac{1}{\sqrt{u+\abs{z}}}
  \frac{d(x,u)^{\alpha+2}}{1+(\abs{x}d(x,u))^{\beta}}
  e^{-\frac{1}{4}d(x,u)^2} \,du \\
&\le \left(\int_{\abs{z}}^{\abs{z}+1} \frac{1}{\sqrt{u-\abs{z}}}
   \,du\right) \frac{1}{\sqrt{2\abs{z}}}
  \frac{d(x,z)^{\alpha+2}}{1+(\abs{x}d(x,z))^{\beta}}
  e^{-\frac{1}{4}d(x,z)^2}  \\
   &= 2 \frac{1}{\sqrt{2\abs{z}}}
  \frac{d(x,z)^{\alpha+2}}{1+(\abs{x}d(x,z))^{\beta}}
  e^{-\frac{1}{4}d(x,z)^2} \\
  &\le C \frac{d(x,z)^{\alpha+1}}{1+(\abs{x}d(x,z))^{\beta}}
  e^{-\frac{1}{4}d(x,z)^2}.
\end{align*}
For the other term, we observe
\begin{align*}
  \int_{\abs{z}+1}^{\infty} \frac{1}{\sqrt{u-\abs{z}}}
  \frac{1}{\sqrt{u+\abs{z}}}
  \frac{d(x,u)^{\alpha+2}}{1+(\abs{x}d(x,u))^{\beta}}
  e^{-\frac{1}{4}d(x,u)^2} \,du &\le \int_{\abs{z}+1}^{\infty}   \frac{1}{\sqrt{u+\abs{z}}}
  \frac{d(x,u)^{\alpha+2}}{1+(\abs{x}d(x,u))^{\beta}}
  e^{-\frac{1}{4}d(x,u)^2} \,du \\
  &\le \int_{\abs{z}}^{\infty}   \frac{1}{\sqrt{2u}}
  \frac{d(x,u)^{\alpha+2}}{1+(\abs{x}d(x,u))^{\beta}}
  e^{-\frac{1}{4}d(x,u)^2} \,du \\
  &\le C \int_{\abs{z}}^{\infty}   \frac{d(x,u)^{\alpha+1}}{1+(\abs{x}d(x,u))^{\beta}}
  e^{-\frac{1}{4}d(x,u)^2} \,du
\end{align*}
We now make the change of variables $w=\frac{1}{4}d(x,u)^2$.  By the
above lemma, $du/dw$ is bounded, so
\begin{align*}
  \int_{\abs{z}}^{\infty}   \frac{d(x,u)^{\alpha+1}}{1+(\abs{x}d(x,u))^{\beta}}
  e^{-\frac{1}{4}d(x,u)^2} \,du \le C \int_{\frac{1}{4}d(x,z)^2}^\infty
  \frac{(4w)^{(\alpha+1)/2}}{1+(2\abs{x}\sqrt{w})^\beta} e^{-w}\,dw.
\end{align*}
If $d(x,z) \le 1/\abs{x}$, we have
\begin{align*}
  \int_{\frac{1}{4}d(x,z)^2}^\infty
  \frac{(4w)^{(\alpha+1)/2}}{1+(2\abs{x}\sqrt{w})^\beta} e^{-w}\,dw
  &\le \int_{\frac{1}{4}d(x,z)^2}^\infty
  {(4w)^{(\alpha+1)/2}} e^{-w}\,dw \\
  &\le C d(x,z)^{\alpha+1} e^{-\frac{1}{4}d(x,z)^2} \\
  &\le 2C \frac{d(x,z)^{\alpha+1}}{1+(\abs{x}d(x,z))^\beta}e^{-\frac{1}{4}d(x,z)^2}
\end{align*}
where we have used Lemma \ref{exp-poly-est}.

On the other hand, when $d(x,z) \ge 1/\abs{x}$, we have
\begin{align*}
  \int_{\frac{1}{4}d(x,z)^2}^\infty
  \frac{(4w)^{(\alpha+1)/2}}{1+(2\abs{x}\sqrt{w})^\beta} e^{-w}\,dw
  &\le (2\abs{x})^{-\beta} \int_{\frac{1}{4}d(x,z)^2}^\infty
  {(4w)^{(\alpha+1-\beta)/2}} e^{-w}\,dw \\
  &\le C \abs{x}^{-\beta} d(x,z)^{\alpha+1-\beta} e^{-\frac{1}{4}d(x,z)^2} \\
  &\le 2C \frac{d(x,z)^{\alpha+1}}{1+(\abs{x}d(x,z))^\beta}e^{-\frac{1}{4}d(x,z)^2}
\end{align*}

Combining all this, we have as desired that
\begin{equation}
  Q^{(\alpha,\beta)}(x,z) \asymp \frac{d(x,z)^{\alpha+1}}{1+(\abs{x}d(x,z))^\beta}e^{-\frac{1}{4}d(x,z)^2}.
\end{equation}
\end{proof}

\begin{corollary}
  Theorems  \ref{region-II-III-theorem} and
  \ref{gradient-thm-regionII} also hold for $m$ even.
\end{corollary}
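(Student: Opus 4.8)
The plan is as follows. For $p^{(n,m)}$ with $m$ even nothing new is needed: combining (\ref{p-Q-comp}) with Lemma \ref{Q-est} gives
\[
p^{(n,m)}(x,z)\asymp Q^{(2n-m-2,\,n-\frac12)}(x,z)\asymp \frac{d(x,z)^{2n-m-1}}{1+(\abs{x}d(x,z))^{n-\frac12}}\,e^{-\frac14 d(x,z)^2}
\]
for $\abs{z}\ge B_1\abs{x}^2$, $\abs{z}\ge d_0$, which is exactly Theorem \ref{region-II-III-theorem} for even $m$. The real task is the gradient, and the idea is to run the same Hadamard descent one level down, on $q_1$ and $q_2$ rather than on $p_1$ itself.

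First I would differentiate the identity (\ref{dimension-reduce}) in $\abs{x}$ and in $\abs{z}$. Since $q_1=-\tfrac{2}{\abs{x}}\partial_{\abs{x}}p_1$, $q_2=\partial_{\abs{z}}p_1$, and $p^{(n,m)}(x,z)$ depends on $z$ only through $\abs{z}$, this yields
\[
q_1^{(n,m)}(x,z)=\int_\R q_1^{(n,m+1)}\bigl(x,(z,z_{m+1})\bigr)\,dz_{m+1},\qquad q_2^{(n,m)}(x,z)=\int_\R \frac{\abs{z}}{\abs{(z,z_{m+1})}}\,q_2^{(n,m+1)}\bigl(x,(z,z_{m+1})\bigr)\,dz_{m+1},
\]
the chain-rule factor $\abs{z}/\abs{(z,z_{m+1})}\le 1$ appearing only in the second (equivalently, both identities follow directly from the Fourier-integral formulas (\ref{q1-def})--(\ref{q2-def}) as (\ref{dimension-reduce}) does from (\ref{Rm-integral})); the differentiation under the integral sign is justified by the uniform bounds coming from (\ref{Rm-integral}). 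Since $\abs{(z,z_{m+1})}\ge\abs{z}$ and $d(x,\cdot)$ increases in the size of its second argument, for \emph{every} $z_{m+1}\in\R$ the point $(x,(z,z_{m+1}))$ still lies in the region $\{\abs{\tilde z}\ge B_1\abs{x}^2,\ d(x,\tilde z)\ge d_0\}$ where, after enlarging $B_1,d_0$ if necessary, Theorem \ref{gradient-thm-regionII} in the odd center dimension $m+1$ applies.

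Next I would feed the $m+1$ estimates into these identities. The step needing care is that $q_1^{(n,m+1)}$ has constant sign on this region: in the proof of Theorem \ref{gradient-thm-regionII}, $-q_1^{(n,m+1)}$ is a sum $\sum_k c_{m+1,k}\abs{z}^{k-m}I_k$ with $c_{m+1,k}>0$ and each $I_k$ real and, by Theorem \ref{h-estimate}, $\asymp$ a strictly positive function, so $-q_1^{(n,m+1)}>0$. Integration over $z_{m+1}$ then produces no cancellation, and by (\ref{q1-estimate-regionII}) for $m+1$, Corollary \ref{distance-estimate} ($d^2\asymp\abs{z}$ in this region), and Lemma \ref{Q-est},
\[
\abs{q_1^{(n,m)}(x,z)}=\int_\R \bigl|q_1^{(n,m+1)}\bigl(x,(z,z_{m+1})\bigr)\bigr|\,dz_{m+1}\asymp Q^{(2n-m,\,n+\frac12)}(x,z)\asymp \frac{d(x,z)^{2n-m+1}}{1+(\abs{x}d(x,z))^{n+\frac12}}\,e^{-\frac14 d(x,z)^2};
\]
for $q_2$ only the upper bound is needed, and from $\abs{z}/\abs{(z,z_{m+1})}\le1$, (\ref{q2-upper-regionII}) for $m+1$, and Lemma \ref{Q-est},
\[
\abs{q_2^{(n,m)}(x,z)}\le\int_\R \bigl|q_2^{(n,m+1)}\bigl(x,(z,z_{m+1})\bigr)\bigr|\,dz_{m+1}\le C\,Q^{(2n-m-2,\,n-\frac12)}(x,z)\asymp \frac{d(x,z)^{2n-m-1}}{1+(\abs{x}d(x,z))^{n-\frac12}}\,e^{-\frac14 d(x,z)^2},
\]
which is (\ref{q2-upper-regionII}) for even $m$.

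Finally I would assemble the gradient via (\ref{grad-q1-q2}): since $\hat x$ and $J_{\hat z}\hat x$ are orthonormal, $\abs{\grad p_1^{(n,m)}}\asymp\abs{x}\bigl((q_1^{(n,m)})^2+(q_2^{(n,m)})^2\bigr)^{1/2}$, and dividing the two displays above gives $\abs{q_2^{(n,m)}}/\abs{q_1^{(n,m)}}\lesssim \frac{1+(\abs{x}d)^{n+1/2}}{d^2(1+(\abs{x}d)^{n-1/2})}$, which is $\lesssim d_0^{-2}$ when $\abs{x}d\le1$ and $\lesssim\abs{x}/d\lesssim B_1^{-1/2}$ when $\abs{x}d\ge1$ (using $d^2\asymp\abs{z}\ge B_1\abs{x}^2$); so the $q_1$ term dominates and
\[
\abs{\grad p_1^{(n,m)}(x,z)}\asymp \abs{x}\,\frac{d(x,z)^{2n-m+1}}{1+(\abs{x}d(x,z))^{n+\frac12}}\,e^{-\frac14 d(x,z)^2},
\]
which is Theorem \ref{gradient-thm-regionII} for even $m$. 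The step I expect to be the main obstacle is the constant-sign property of $q_1^{(n,m+1)}$: without it the dimension-reduction identity yields only the upper bound for $\abs{q_1^{(n,m)}}$, hence no lower bound for $\abs{\grad p_1}$; everything else is bookkeeping with Lemma \ref{Q-est} and the odd-dimension estimates already in hand.
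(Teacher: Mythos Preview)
Your proposal is correct and follows essentially the same route as the paper: the identity (\ref{dimension-reduce}) is differentiated to express $q_1^{(n,m)}$ and $q_2^{(n,m)}$ as integrals of their $(m+1)$-dimensional counterparts, the odd-$m$ estimates are fed in, and Lemma \ref{Q-est} converts the integrated bounds to the desired form. You are in fact more careful than the paper on the one nontrivial point, namely the constant sign of $q_1^{(n,m+1)}$ needed to pass from a pointwise $\asymp$ inside the integral to an $\asymp$ for the integral; the paper writes $q_1^{(n,m)}\asymp Q^{(2n-m,n+\frac12)}$ without comment, whereas you extract the positivity from the term-by-term structure in the proof of Theorem \ref{gradient-thm-regionII} together with Theorem \ref{h-estimate}. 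For $q_2$ the paper keeps the factor $\abs{z}/\abs{(z,z_{m+1})}$ and obtains the full two-sided estimate $\asymp \abs{z}\,Q^{(2n-m-4,n-\frac12)}$, while you bound that factor by $1$ and use only the upper bound $Q^{(2n-m-2,n-\frac12)}$; both give the same upper bound on $\abs{q_2^{(n,m)}}$, which is all that Theorem \ref{gradient-thm-regionII} requires.
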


\begin{proof}
The heat kernel estimate of Theorem \ref{region-II-III-theorem} is
immediate, given (\ref{p-Q-comp}) and Lemma \ref{Q-est}.
  
To obtain an estimate on $\grad p_1$, we define $q_1^{(n,m)} :=
-\frac{2}{\abs{x}} \frac{\partial}{\partial \abs{x}} p_1^{(n,m)}(x,z)$,
$q_2^{(n,m)} := \frac{\partial}{\partial \abs{z}} p_1^{(n,m)}(x,z)$,
as in (\ref{grad-q1-q2}).

For $q_1$, we simply differentiate (\ref{dimension-reduce}) to see
\begin{align*}
  q_1^{(n,m)}(x,z) &= \int_\R q_1^{(n,m+1)}(x,(z,z_{m+1}))\,dz_{m+1}
  \\
  &\asymp Q^{(2n-m,n+\frac{1}{2})}(x,z) && \text{by
    (\ref{q1-estimate-regionII})}\\
  &\asymp
  \frac{d(x,z)^{2n-m+1}}{1+(\abs{x}d(x,z))^{n+\frac{1}{2}}}e^{-\frac{1}{4}d(x,z)^2}
  && \text{by Lemma \ref{Q-est}.}
\end{align*}

For $q_2$, we again differentiate (\ref{dimension-reduce}).  Here we
obtain
\begin{align*}
  q_2^{(n,m)}(x,z) &= \int_\R q_2^{(n,m+1)}(x,(z,z_{m+1}))
  \frac{\abs{z}}{\abs{(z,z_{m+1})}}\,dz_{m+1} \\
  &\asymp \abs{z} Q^{(2n-m-4,n-\frac{1}{2})} && \text{by
    (\ref{q2-estimate-regionII})} \\
  &\asymp d(x,z)^2
  \frac{d(x,z)^{2n-m-3}}{1+(\abs{x}d(x,z))^{n-\frac{1}{2}}}e^{-\frac{1}{4}d(x,z)^2}
  \\
  &\asymp
  \frac{d(x,z)^{2n-m-1}}{1+(\abs{x}d(x,z))^{n-\frac{1}{2}}}e^{-\frac{1}{4}d(x,z)^2}.
\end{align*}

Repeating the computation from Theorem \ref{gradient-thm-regionII}, we
have the desired estimates on $\abs{\grad p_1}$ and $\abs{q_2}$.
\end{proof}

\section{Conclusion}

An obvious extension of this result would be to obtain precise
estimates for the heat kernel in more general nilpotent Lie groups.
For step-2 nilpotent groups, a formula for the heat kernel along the
lines of (\ref{Rm-integral}) can be found in \cite{cygan}, among
others.  However, the additional algebraic structure enjoyed by H-type
groups has played a major part in the analysis presented here, and its
absence complicates matters considerably.  A key difficulty is that
the exponent in the formula for $p_t$ now contains expressions like
$J_\lambda \cot J_\lambda$, which are awkward to work with when
$J_\lambda$ may not commute with its derivatives with respect to $\lambda$.

\def\cprime{$'$}


\end{document}